\documentclass[a4paper]{article}
\usepackage[margin=30mm]{geometry}
\usepackage{amsmath}
\usepackage{amssymb}
\usepackage{amsthm}
\usepackage{braket}
\usepackage{titlesec}
\usepackage{titlefoot}
\usepackage{comment}
\usepackage{bm}
\titleformat*{\section}{\normalsize\bfseries}
\titleformat*{\subsection}{\normalsize\bfseries}
\allowdisplaybreaks
\def\R{\mathbb{R}}
\def\e{{\varepsilon}}        

\def\p{\partial}

\newtheorem{thm}{Theorem}[section]
\newtheorem{lem}[thm]{Lemma}
\newtheorem{cor}[thm]{Corollary}
\newtheorem{prop}[thm]{Proposition}
\newtheorem{rem}[thm]{Remark}

\makeatletter
\@addtoreset{equation}{section}
 
  \makeatother
 
\begin{document}

\title{
\vspace{-1cm}
\large{\bf Asymptotic Profile of Solutions to the Cauchy problem for the Generalized Kadomtsev--Petviashvili Equations \\
with Anisotropic Dissipation in 2D}}
\author{Ikki Fukuda\\ [.7em]
Faculty of Engineering, Shinshu University
}
\date{}
\maketitle

\footnote[0]{2020 Mathematics Subject Classification: 35B40, 35Q53.}

\vspace{-0.75cm}
\begin{abstract}
We consider the Cauchy problem for the generalized Kadomtsev--Petviashvili equations with the dissipation term $-\nu u_{xx}$ in 2D. This is one of the nonlinear dispersive-dissipative type equations, which has a spatial anisotropy. In this paper, we investigate the large time behavior of the solution to this problem. Especially, we show that the $L^{\infty}$-norm of the solution decays at the rate of $t^{-7/4}$ if the initial data $u_{0}(x, y)$ satisfies $(1+|x|)u_{0}\in L^{1}(\R^{2})$ with the zero-mass condition and some appropriate regularity assumptions. Moreover, combining techniques used for parabolic equations and the Schr\"{o}dinger equation, we also derive the detailed asymptotic profile of the solution. 
\end{abstract}

\medskip
\noindent
{\bf Keywords:} 
KP equations; anisotropic dissipation; asymptotic profile; optimal decay estimate. 

\section{Introduction}  

\indent

We consider the Cauchy problem for the following generalized Kadomtsev--Petviashvili (KP) equations with anisotropic dissipation effect:
\begin{align}\label{KPB-pre}
\begin{split}
& u_{t} + u^{p}u_{x} + u_{xxx} + \e v_{y} -\nu u_{xx} = 0, \ \ v_{x}=u_{y}, \ \ (x, y) \in \R^{2}, \ t>0,\\
& u(x, y, 0) = u_{0}(x, y), \ \ (x, y) \in \R^{2}, 
\end{split}
\end{align}
where $u = u(x, y, t)$ is a real-valued unknown function, $u_{0}(x, y)$ is a given initial data, $p\ge1$ is an integer, $\e \in \{-1, 1\}$ and $\nu>0$. 
The subscripts $x$, $y$ and $t$ denote the partial derivatives with respect to $x$, $y$ and $t$, respectively. 
When $p=1$, \eqref{KPB-pre} is called the KP--Burgers equation which can be regarded as a mathematical model for two-dimensional wave propagation taking into account the effect of the anisotropic dissipation described by $-\nu u_{xx}$. 
We emphasize that the dissipation term $-\nu u_{xx}$ acts only in the main direction of the propagation in \eqref{KPB-pre}, since we are interested in an almost unidirectional propagation. 
The purpose of our study is to investigate the large time behavior of the solutions to \eqref{KPB-pre}. 
In particular, we would like to obtain the optimal $L^{\infty}$-decay estimate of the solutions. 
In addition, we are interested in the detailed asymptotic profile of the solutions. 

First of all, we would like to introduce some known results related to this problem. We note that our target problem \eqref{KPB-pre} is a dissipative version of the following generalized KP equations: 
\begin{align}\label{gKP}
\begin{split}
& u_{t} + u^{p}u_{x} + u_{xxx} + \e v_{y} = 0, \ \ v_{x}=u_{y}, \ \ (x, y) \in \R^{2}, \ t>0,\\
& u(x, y, 0) = u_{0}(x, y), \ \ (x, y) \in \R^{2}.
\end{split}
\end{align}
When $p=1$, \eqref{gKP} is classically called the KP-I ($\e=-1$) or KP-I\hspace{-.1em}I ($\e=1$) equation, which are known as universal models for nearly one directional weakly nonlinear dispersive waves, with weak transverse effects and strong surface tension effects. 
In what follows, let us recall the previous results for \eqref{gKP}. Now, we note that \eqref{gKP} has the following conserved mass and energy:
\begin{equation}\label{E}
M(u)=\int_{\R^2}u^2dxdy,\quad 
E(u)=\int_{\R^2}\left(\frac{1}{2}u_x^2-\frac{\e}{2}v^2-\frac{1}{(p+1)(p+2)}u^{p+2}\right)dxdy.
\end{equation}
Thus, the natural energy space can be defined by
\[
E^1(\R^2):=\left\{u\in L^2(\R^2); \ \|u\|_{L^2}+\|u_x\|_{L^2}+\|v\|_{L^2}<\infty, \ v_x=u_y\right\}. 
\]
Then, Ionescu--Kenig--Tataru~\cite{IKT08} proved the global well-posedness of the KP-I equation in $E^1(\R^2)$. 
On the other hand, Bourgain~\cite{B93} obtained that the KP-I\hspace{-.1em}I equation is globally well-posed in $L^2(\R^2)$. 
Moreover, Hadac--Herr--Koch~\cite{HHK09} showed the global well-posedness of the KP-I\hspace{-.1em}I equation 
for small initial data, in the scaling critical anisotropic Sobolev space $\dot{H}^{-\frac{1}{2},0}(\R^{2})$. 
They also proved that the scattering result of the small solution. We note that the methods used in Bourgain~\cite{B93} and Hadac--Herr--Koch~\cite{HHK09} cannot be applied for the KP-I equation, since the KP-I equation does not have good structure such as the KP-I\hspace{-.1em}I equation in view of the non-resonance. 
Furthermore, the large time behavior of the solution to the KP-I and KP-I\hspace{-.1em}I equations are studied by Hayashi--Naumkin~\cite{HN14}. 
They gave the $L^{\infty}$-decay estimate and the asymptotic formula for $\p_{x}u(x, y, t)$ of the solution to the KP-I or KP-I\hspace{-.1em}I equations, under the smallness assumptions on the initial data. 
Also, the scattering result on $\p_{x}u(x, y, t)$ for the KP-I equation is obtained by Harrop-Griffiths--Ifrim--Tataru~\cite{HIT16}. 
For the generalized KP equation, i.e., \eqref{gKP} with $p\ge2$, Hayashi--Naumkin--Saut~\cite{HNS99} 
obtained the $L^{\infty}$-decay estimates of $u(x, y, t)$ and $\p_{x}u(x, y, t)$. Moreover, they derived the asymptotic formula for $\p_{x}u(x, y, t)$ of the small solution to \eqref{gKP}. After that, the results on the decay estimates given in \cite{HNS99} were improved by Niizato~\cite{N11}. 
We remark that these results given in \cite{HN14, HNS99, N11}, the decaying and the regularity of $\partial_x^{-1}u_0$ are assumed for the initial data $u_0(x, y)$, where $\partial_x^{-1}$ denotes the anti-derivative operator which will be defined by \eqref{anti-derivative} below. 
However, the assumptions for $\partial_x^{-1}u_0$ are not unnatural because of the form of the corresponding energy. 

Although we have reviewed results on \eqref{gKP}, our present analysis is essentially different in nature from those. 
Actually, compared with \eqref{gKP}, our target problem \eqref{KPB-pre}, which has the dissipation term $-\nu u_{xx}$, 
should be considered as a dispersive-dissipative type equation, not as a purely dispersive equation. 
Therefore, to analyze this problem, it is necessary to employ not only methods developed for dispersive equations but also techniques for parabolic equations. 
For such equations, there are relatively few results in higher dimensions, and most results are concerned with the one-dimensional case. A typical example of such equations is the following generalized KdV--Burgers equation: 
\begin{align}\label{KdVB}
\begin{split}
& u_{t} + u^{p}u_{x} + u_{xxx} -\nu u_{xx} = 0, \ \ x \in \R, \ t>0,\\
& u(x, 0) = u_{0}(x), \ \ x \in \R, 
\end{split}
\end{align}
which is a one-dimensional version of \eqref{KPB-pre}. Since the global well-posedness of \eqref{KdVB} in the Sobolev spaces can be easily proved by virtue of the dissipation effect, the main topic is about the asymptotic analysis. The first study of the KdV--Burgers equation, i.e., \eqref{KdVB} with $p=1$, is given by Amick--Bona--Schonbek~\cite{ABS89}. They derived the $L^{q}$-decay estimates which have the same decay rates of the solution to the heat equation. It means that the dissipation effect $-\nu u_{xx}$ is stronger than the effects of the dispersion $u_{xxx}$. 
Moreover, Karch~\cite{K99-2} extended the results given in \cite{ABS89} and obtained the asymptotic formula for the solution to \eqref{KdVB} with $p=1$. 
He showed that if the initial data satisfies $u_{0}\in L^{1}(\R)$ and some appropriate regularity assumptions, then the solution $u(x, t)$ converges to the self-similar solution to the Burgers equation: 
\begin{equation*}
\chi_{t}+\chi \chi_{x}-\nu \chi_{xx}=0, \ \ x\in \R, \ t>0.  
\end{equation*}
Hayashi--Naumkin~\cite{HN06} improved their result given in \cite{K99-2}. 
Moreover, Kaikina--Ruiz-Paredes~\cite{KP05} derived the second asymptotic profile of the solution, under the additional assumption $xu_{0}\in L^{1}(\R)$. 
In view of the second asymptotic profile, they found that the effect of the dispersion term $u_{xxx}$ appears from the second term of its asymptotics. 
For a generalization of these results, see also \cite{F19-1}. On the other hand, for the generalized KdV--Burgers equation, i.e., \eqref{KdVB} with $p\ge2$, Karch~\cite{K99-1} showed that the solution to $u(x, t)$ tends to the heat kernel instead of the self-similar solution to the Burgers equation. In addition, he also derived the second term of asymptotics for the solution, in the same paper \cite{K99-1}. For some related results on this topic, let us refer to \cite{HKNS06}. 

The above one-dimensional results tell us that the dispersion term $u_{xxx}$ can be negligible in the sense of the first approximation, due to the dissipation effect $-\nu u_{xx}$. From this point of view, we can expect that the same phenomenon also happen in our target equation \eqref{KPB-pre}. 
The author and Hirayama \cite{FH23} studied \eqref{KPB-pre} to demonstrate this issue. In what follows, we would like to explain the known results for \eqref{KPB-pre} related to the present study. In particular, we focus on the results given in \cite{M99} and \cite{FH23}. 
In order to analyze \eqref{KPB-pre}, we shall rewrite \eqref{KPB-pre} in the form of an evolution equation. 
Based on Molinet~\cite{M99}, for $m \in \mathbb{N}$ and $s\ge0$, let us introduce the following function spaces: 
\begin{align}
& \dot{\mathbb{H}}^{-m}_{x}(\R^{2}) := \left\{ f \in \mathcal{S}'(\R^{2}); \ \|f\|_{\dot{\mathbb{H}}^{-m}_{x}} := \left\| \xi^{-m} \hat{f} \right\|_{L^{2}} < \infty \right\},  \label{space-Hm} \\
& X^{s}(\R^{2}) := \left\{ f \in H^{s}(\R^{2}); \ \|f\|_{X^{s}} := \|f\|_{H^{s}} + \left\| \mathcal{F}^{-1}\left[ \xi^{-1}\hat{f}(\xi, \eta)\right] \right\|_{H^{s}} <\infty \right\}.  \label{space-Xs}
\end{align}
Moreover, we shall define the anti-derivative operator $\p_{x}^{-1}$ acting on $\dot{\mathbb{H}}^{-1}_{x}(\R^{2})$ by 
\begin{equation}\label{anti-derivative}
\p_{x}^{-1}f(x, y):=\mathcal{F}^{-1}\left[ (i\xi)^{-1}\hat{f}(\xi, \eta) \right](x, y).
\end{equation}
The definition of the Fourier transform $\hat{f}(\xi, \eta)=\mathcal{F}[f](\xi, \eta)$ will be given at the end of this section. 
We can easily see that $X^{s}(\R^{2})$ is embedded in $\dot{\mathbb{H}}^{-m}_{x}(\R^{2})$ 
for any $m\in \mathbb{N}$ and $s\ge 0$. 
Therefore, for $u \in X^{1}(\R^{2})$, $v_{x}=u_{y}$ in \eqref{KPB-pre} can be rewritten by $v = \p_{x}^{-1}u_{y}$. 
Because of this, \eqref{KPB-pre} is equivalent to the following Cauchy problem: 
\begin{align}\label{KPB}
\begin{split}
& u_{t} + u^{p}u_{x} + u_{xxx} + \e \p_{x}^{-1}u_{yy} -\nu u_{xx} = 0, \ \ (x, y) \in \R^{2}, \ t>0,\\
& u(x, y, 0) = u_{0}(x, y), \ \ (x, y) \in \R^{2}.  
\end{split}
\end{align}

In what follows, we would like to consider \eqref{KPB} instead of \eqref{KPB-pre}. 
First, let us introduce the fundamental results about the global well-posedness of \eqref{KPB}. Molinet~\cite{M99} constructed the solutions to \eqref{KPB}, provided the initial data $u_{0} \in X^{s}(\R^{2})$ for $s>2$. More precisely, he showed that if $p\ge1$ and $B(u_{0})$ is sufficiently small, then \eqref{KPB} has a unique global mild solution $u(x, y, t)$ satisfying 
\begin{align*}
&u \in C_{b}([0, \infty); X^{s}(\R^{2})) \cap L^{2}(0, \infty; H^{s}(\R^{2})), \\
&\p_{x}^{l}u \in C_{b}([1, \infty); H^{s}(\R^{2})) \cap L^{2}(1, \infty; H^{s}(\R^{2})), \ \ l \in \mathbb{N}, 
\end{align*}
and the mapping $u_{0} \mapsto u$ is continuous from $X^{s}(\R^{2})$ into $C([0, \infty); X^{s}(\R^{2}))$, where 
\begin{align*}
B(u_{0}):=\left\|u_{0}\right\|_{H^{1}}^{2}+\left\|\p_{x}^{2}u_{0}\right\|_{L^{2}}^{2}+\left\|\p_{x}^{-1}\p_{y}u_{0}\right\|_{L^{2}}^{2}. 
\end{align*}
This quantity $B(u_{0})$ arises from the structure of the energy \eqref{E}. Moreover, the solution satisfies the following a priori estimate: 
\begin{equation}\label{apriori-Xs}
\left\|u(t)\right\|_{X^{s}}^{2}+\int_{0}^{t}\left\|u_{x}(\tau)\right\|_{X^{s}}^{2}d\tau \le C_{\dag}\left(B(u_{0}), \|u_{0}\|_{X^{s}}\right), \ \ t\ge 0,  
\end{equation}
where $C_{\dag}\left(B(u_{0}), \|u_{0}\|_{X^{s}}\right)$ is a certain positive constant depending on $B(u_{0})$ and $\|u_{0}\|_{X^{s}}$, which goes to zero as $u_{0} \to 0 $ in $X^{s}(\R^{2})$. For the related results about the global well-posedness of \eqref{KPB}, we can also refer to another his paper \cite{M00}. 
Furthermore, for the case $p=1$, we also remark that the global well-posedness of \eqref{KPB} is improved by Kojok~\cite{K07} for KP-I\hspace{-.1em}I and Darwich~\cite{D12} for KP-I (see, also \cite{MR02}). 

In addition to the global well-posedness, Molinet also discussed the large time behavior of the solution to \eqref{KPB} in \cite{M99}. More precisely, he gave some results about the $L^{\infty}$-decay estimates for the solution. In what follows, let $s=3$ for simplicity. 
Then, under the assumptions $u_{0} \in X^{3}(\R^{2})$, $\p_{x}^{-1}u_{0} \in L^{1}(\R^{2})$ and $\p_{x}^{-1}\p_{y}^{3}u_{0} \in L^{1}(\R^{2})$, the solution to \eqref{KPB} satisfies 
\begin{equation}\label{u-decay-Molinet}
\left\|u(t)\right\|_{L^{\infty}} \le C
\begin{cases}
t^{-\frac{7}{4}},& p\ge2,  \\
t^{-\frac{3}{2}},& p=1,  
\end{cases}
\end{equation}
for all $t\ge1$. In fact, the estimates for the higher-order derivatives have also been established, and it is known that the decay rate improves by $1/2$ with each $x$-derivative (for simplicity, we omit the details here). 
We note that this decay estimate \eqref{u-decay-Molinet} is a different from not only the one for the solution to \eqref{KdVB} but also the one for the solution to two-dimensional parabolic equations and dispersive equations. 
Actually, this estimate \eqref{u-decay-Molinet} is constructed by the interactions of the effects of the dissipation $-\nu u_{xx}$ in the $x$-direction, the dispersion $\e \p_{x}^{-1}u_{yy}$ in both the $x$-direction and the $y$-direction, and also the anti-derivative $\p_{x}^{-1}$ on the initial data $u_{0}(x, y)$. 

Recently, the author and Hirayama \cite{FH23} investigated the optimality for the decay rate $t^{-7/4}$ given in \eqref{u-decay-Molinet} if $p\ge2$, by deriving the lower bound of the $L^{\infty}$-norm for the solution $u(x, y, t)$ to \eqref{KPB}. More precisely, under the same assumptions in the results given by Molinet~\cite{M99} and $p\ge2$, there exists a positive constant $C_{\dag}>0$ such that the solution $u(x, y, t)$ to \eqref{KPB} satisfies 
\begin{equation}\label{u-est-lower-FH}
\liminf_{t\to \infty}t^{\frac{7}{4}}\left\|u(t)\right\|_{L^{\infty}} \ge C_{\dag}\left|\mathcal{M}_{0}\right|,  
\end{equation}
where the real constant $\mathcal{M}_{0}$ is defined by 
\begin{equation}\label{DEF-mathM}
\mathcal{M}_{0}:=\int_{\R^{2}}\p_{x}^{-1}u_{0}(x, y)dxdy-\frac{1}{p+1}\int_{0}^{\infty}\int_{\R^{2}}u^{p+1}(x, y, t)dxdydt. 
\end{equation}
By virtue of the above result, for $p\ge2$, we can conclude that the $L^{\infty}$-decay estimate \eqref{u-decay-Molinet} is optimal with respect to the decay rate $t^{-7/4}$, under the condition $\mathcal{M}_{0}\neq0$. In addition, they also give a sufficient condition for $\mathcal{M}_{0}\neq0$ in \cite{FH23}. Furthermore, they derived that an approximation formula for the solution to \eqref{KPB}. Now, let us consider the following problem: 
\begin{align}
\begin{split}\label{phi-eq}
& \phi_{t} + \e \p_{x}^{-1}\phi_{yy} -\nu \phi_{xx} = -u^{p}u_{x}, \ \ (x, y) \in \R^{2}, \ t>0,\\
& \phi(x, y, 0) = u_{0}(x, y), \ \ (x, y) \in \R^{2}, 
\end{split}
\end{align}
where $u(x, y, t)$ is the original solution to \eqref{KPB}. 
Then, under the same assumptions to get \eqref{u-decay-Molinet} and \eqref{u-est-lower-FH}, we can see that the solution $u(x, y, t)$ to \eqref{KPB} is well approximated by the above function $\phi(x, y, t)$. More precisely, the following approximation formula has been established: 
 \begin{equation}
\left\|u(t)-\phi(t)\right\|_{L^{\infty}}\le Ct^{-\frac{9}{4}}, \ \ t\ge2, \label{u-approximation}
\end{equation}
This approximation formula \eqref{u-approximation} tells us that the dispersion term $u_{xxx}$ in the direction in which the dissipation term $-\nu u_{xx}$ is working can be negligible as $t\to \infty$. Therefore, we can say that \eqref{KPB} has the similar property of the one-dimensional problem \eqref{KdVB}. 

As we mentioned in the above, a lot of results have already been obtained for \eqref{KPB} such as the decay estimates of the solution and the approximation formula. However, several issues remain to be discussed. First, regarding the optimality for the decay rate of the solution, only the case $p\ge2$ has been discussed in \cite{FH23}, and it is not known whether the decay rate $t^{-3/2}$ obtained in the estimate (1.9) is optimal in the case $p=1$. Another issue is that the approximation formula \eqref{u-approximation} does not provide detailed information on the profile of the solution. Actually, the equation \eqref{phi-eq}, which determines $\phi(x, y, t)$ contains the original solution $u(x, y, t)$ itself in its inhomogeneous term. Thus, neither an explicit representation of $\phi(x, y, t)$ nor its properties can be obtained. From a practical point of view, one should derive an asymptotic profile that can be expressed explicitly. 
Furthermore, some of the assumptions on the initial data seem to be a little bit restrictive. Indeed, $u_{0}(x, y)$ are assumed to satisfy $\p_{x}^{-1}u_{0} \in L^{1}(\R^{2})$ and $\p_{x}^{-1}\p_{y}^{3}u_{0} \in L^{1}(\R^{2})$. However, these conditions do not appear to be natural. At least, it is desirable to perform the asymptotic analysis without assuming $\p_{x}^{-1}\p_{y}^{3}u_{0} \in L^{1}(\R^{2})$. Also, we would like to analyze \eqref{KPB} under the condition $u_{0}\in L^{1}(\R^{2})$ instead of $\p_{x}^{-1}u_{0} \in L^{1}(\R^{2})$. These are the motivations for our study. 

Finally, before stating our main results, we note that an additional constraint which is called the zero-mass condition are implicitly imposed on the initial data $u_{0}(x, y)$, under the current framework. More precisely, if $s>2$ and $u_{0}\in X^{s}(\R^{2})\cap L^{1}(\R^{2})$, then we have 
\begin{equation}\label{zero-mass}
\int_{\R}u_{0}(x, y)dx=0, \ \ {\rm a.e.} \ y\in \R. 
\end{equation}
Actually, if we set $v_{1}(x, y)=\p_{x}^{-1}u_{0}(x, y)$ and $v_{2}(x, y):=\int_{-\infty}^{x}u_{0}(z, y)dz$, 
then $\hat{v}_{1}(\xi, y)=\hat{v}_{2}(\xi, y)$ in $\mathcal{S}'(\R^{2})$. 
Thus, we have $v_{1}=v_{2}$, i.e., the anti-derivative operator $\p_{x}^{-1}$ can be rewritten by 
\begin{equation}\label{anti-int}
\p_{x}^{-1}u_{0}(x, y)=\int_{-\infty}^{x}u_{0}(z, y)dz. 
\end{equation}
Now, we note that $v_{1}=\p_{x}^{-1}u_{0}\in H^{s}(\R^{2})$ because $u_{0}\in X^{s}(\R^{2})$. 
Therefore, it follows from $s>2$ and the property of the Sobolev space that 
\[
0=\lim_{x\to \infty}v_{1}(x, y)=\lim_{x\to \infty}v_{2}(x, y)=\lim_{x\to \infty}\int_{-\infty}^{x}u_{0}(z, y)dz=\int_{\R}u_{0}(z, y)dz, \ \ {\rm a.e.} \ y\in \R. 
\]
It means \eqref{zero-mass} holds. Namely, we have $\mathcal{F}_{x}[u_{0}](0, y)=0$. 
This avoids the singularity at $\xi=0$ that appears in \eqref{anti-derivative} for the definition of $\p_{x}^{-1}$. 
For further discussion related on this matter, see e.g. Molinet--Saut--Tzvetkov~\cite{MST07}, Mammeri~\cite{Ma09} and also references therein. 
Also, this condition plays an essential role throughout the analysis of this paper. 

\bigskip
\par\noindent
\textbf{\bf{Main results.}} 

\medskip
In what follows, we state our main results. First, for simplicity, we set 
\begin{align}\label{data-1}
E_{0}:=\sqrt{B(u_{0})}+\left\|u_{0}\right\|_{L^{1}}+\left\|xu_{0}\right\|_{L^{1}}+\left\|\p_{x}^{-1}u_{0}\right\|_{L^{2}}^{2}. 
\end{align}
Next, in order to state the asymptotic formula, let us introduce the following functions: 
\begin{align}
\begin{split}\label{DEF-K}
& K(x, y, t) := t^{ -\frac{5}{4} } K_{*} \left( xt^{-\frac{1}{2}}, yt^{-\frac{3}{4}} \right), \ \ (x, y)\in \R^{2}, \ t>0, \\
& K_{*}(x, y) := \frac{ 1 }{ 4\pi^{ \frac{3}{2} } \nu^{\frac{3}{4}} } \int_{0}^{\infty} r^{-\frac{1}{4} }e^{ -r } \cos \left( x \sqrt{\frac{r}{\nu}} +\frac{y^{2}}{4\e}\sqrt{\frac{r}{\nu}} - \frac{\pi}{4}\e \right) dr, \ \ (x, y)\in \R^{2}.  
\end{split}
\end{align}
We note that this $K(x, y, t)$ is the fundamental solution to the following equation: 
\[
\psi_{t}+ \e \p_{x}^{-1}\psi_{yy} -\nu \psi_{xx} = 0, \ \ (x, y) \in \R^{2}, \ t>0 
\]
(cf.~\cite{FH23}). In addition, we also define the real constant $\mathcal{N}_{0}$ as follows: 
\begin{equation}\label{DEF-mathN}
\mathcal{N}_{0}:=\int_{\R^{2}}(-x)u_{0}(x, y)dxdy-\frac{1}{p+1}\int_{0}^{\infty}\int_{\R^{2}}u^{p+1}(x, y, t)dxdydt. 
\end{equation}
Then, we have the following anisotropic asymptotic formula for the solution to \eqref{KPB}: 
\begin{thm}\label{thm.main}
Let $p\ge1$ be an integer. Assume that $u_{0}\in X^{3}(\R^{2})\cap L^{1}(\R^{2})$, $xu_{0}\in L^{1}(\R^{2})$ and $B(u_{0})$ is sufficiently small. 
Then, the solution $u(x, y, t)$ to \eqref{KPB} satisfies the following estimate: 
\begin{equation}\label{u-sol-decay}
\left\|u(t)\right\|_{L^{\infty}} \le CE_{0}(1+t)^{-\frac{7}{4}}, \ \ t\ge0.  
\end{equation}
Moreover, there exist a bounded function $\omega : [0, \infty)\to [0, \infty)$ satisfying $\lim_{\rho \to 0+}\omega(\rho)=0$, and a remainder function $R(x, y, t)$ such that 
\begin{equation}\label{u-sol-asymp}
\left|u(x, y, t)-\mathcal{N}_{0}\p_{x}K(x, y, t)\right| \le CE_{0}^{p}y^{2}t^{-\frac{13}{4}}+t^{-\frac{7}{4}}\omega\left(|y|t^{-\frac{3}{2}}\right)+R(x, y, t),  
\end{equation}
for each $(x, y)\in \R^{2}$, $t>0$, and 
\begin{equation}\label{Rr-est}
\lim_{t\to \infty}t^{\frac{7}{4}}\left\|R(t)\right\|_{L^{\infty}}=0, 
\end{equation}
where $K(x, y, t)$, $\mathcal{N}_{0}$ and $E_{0}$ are defined by \eqref{DEF-K}, \eqref{DEF-mathN} and \eqref{data-1}, respectively. 
In particular, for any positive function $r(t)=o(t^{3/4})$ $(t\to \infty)$, we have the following asymptotic formula:  
\begin{equation}\label{u-sol-asymp-2}
\lim_{t\to \infty}t^{\frac{7}{4}}\sup_{\substack{x\in\mathbb{R}, \\ |y|\leq r(t)}}\left|u(x, y, t)-\mathcal{N}_{0}\p_{x}K(x, y, t)\right|=0. 
\end{equation}
\end{thm}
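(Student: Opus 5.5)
The plan is to work throughout with the Duhamel formula for \eqref{KPB}, written in terms of the linear propagator $S(t)$ of $u_t+u_{xxx}+\e\p_x^{-1}u_{yy}-\nu u_{xx}=0$, whose Fourier multiplier is $e^{-t(\nu\xi^2-i\xi^3+i\e\eta^2/\xi)}$:
\[
u(t)=S(t)u_0-\frac{1}{p+1}\int_0^t \p_x S(t-\tau)\,u^{p+1}(\tau)\,d\tau .
\]
By the zero-mass condition \eqref{zero-mass}, $\hat u_0(\xi,\eta)=i\xi\,\widehat{\p_x^{-1}u_0}$. Since $xu_0\in L^1$, we also have $|\hat u_0(\xi,\eta)|\le C|\xi|\,\|xu_0\|_{L^1}$ and $\hat u_0(\xi,\eta)=\xi\,\p_\xi\hat u_0(0,\eta)+o(|\xi|)$. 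This replaces the hypothesis $\p_x^{-1}u_0\in L^1$. The oscillatory $\eta$-integral of the form $\int e^{-it\e\eta^2/\xi+iy\eta}\,d\eta$ is computed exactly as a Fresnel integral; it produces $|\xi|^{1/2}t^{-1/2}$ together with the phase $y^2\xi/(4\e t)$, and this is where $K_*$ comes from. The first step is the linear estimate $\|\p_x^{l}S(t)f\|_{L^\infty}\le Ct^{-5/4-(l+1)/2}\bigl(\|f\|_{L^1}+\|xf\|_{L^1}\bigr)$ for zero-mass $f$ with $l\ge0$. For the nonlinear term, which is already a derivative, the bound $\|\p_x S(t)g\|_{L^\infty}\le Ct^{-7/4}\|g\|_{L^1}$ is used. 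For $t\le1$ the $H^s$ bound \eqref{apriori-Xs} suffices. A bootstrap in the norm $\sup_t(1+t)^{7/4}\|u(t)\|_{L^\infty}$ then gives \eqref{u-sol-decay}. The key inputs are the interpolation $\|u^{p+1}\|_{L^1}\le\|u\|_{L^\infty}^{p-1}\|u\|_{L^2}^2$ and the $L^2$ decay, obtained the same way, $\|u(t)\|_{L^2}\le C(1+t)^{-1}$ or at least something integrable enough. For $p=1$ the estimate $\int_0^t (t-\tau)^{-7/4}\|u\|_{L^2}^2\,d\tau$ must be split at $t/2$. On $[t/2,t]$ the derivative is moved onto $u^2$, using $\|\p_xu\|$, and the smoothing bound $t^{-9/4}$ is applied. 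Smallness of $B(u_0)$ closes the argument.

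For the asymptotic formula, I would first dispose of the dispersion: compare $S(t)$ with $G(t)$, the propagator without $u_{xxx}$. Since $|e^{it\xi^3}-1|\le t|\xi|^3$, the difference gains $t^{-1/2}$, giving errors $O(t^{-9/4})$ as in \eqref{u-approximation}. Next, Taylor expand in the data. Write $G(t)u_0=\p_xG(t)\p_x^{-1}u_0$ and replace $\widehat{\p_x^{-1}u_0}(\xi,\eta)$ by $\widehat{\p_x^{-1}u_0}(0,\eta)=\int(-x)u_0\,dx$. In the nonlinear part, split at $\tau=t/2$ and for $\tau\le t/2$ replace $\p_xG(t-\tau)u^{p+1}(\tau)$ by $\p_xG(t)$ applied to $\hat{u^{p+1}}(0,\eta,\tau)$; the tail $\tau\ge t/2$ is $o(t^{-7/4})$ by the decay, and the $\int_{t/2}^\infty$ part of $\mathcal N_0$ is likewise negligible. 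The errors from $\xi$-continuity are $o(t^{-7/4})$ by dominated convergence and go into $R$.

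The main obstacle is that these substitutions only freeze the $\xi$-variable. What remains is $\p_xG(t)$ applied to functions of $\eta$ alone, namely $\int(-x)u_0\,dx$ and the $x$-integrals of $u^{p+1}$, and not to the constant $\mathcal N_0$. To replace $\hat h(0,\eta)$ by $\hat h(0,0)$, I would use the Schr\"odinger-type trick from the Fresnel step. In physical space, the $y$-convolution against the kernel with phase $(y-y')^2\xi/(4\e t)$ is expanded as $e^{iy^2\xi/(4\e t)}\,e^{-iyy'\xi/(2\e t)}\,e^{iy'^2\xi/(4\e t)}$. The factor $e^{iy'^2\xi/(4\e t)}$ with $|\xi|\sim t^{-1/2}$ tends to $1$ and gives an $o(1)$ term into $R$ by dominated convergence, since $h(\cdot,y')\in L^1$. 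The factor $e^{-iyy'\xi/(2\e t)}$ is where the $|y|t^{-3/2}$ dependence enters. Its deviation from $1$ is controlled by the modulus of continuity of the Fourier transform in $y$ of $\int h\,dx\in L^1(\R)$, evaluated at frequency $\sim |y|\xi/t\sim |y|t^{-3/2}$; this defines $\omega$. The $y^2t^{-13/4}$ term comes from the nonlinear part, where one expands to first order using the explicit decay of $u^{p+1}$, giving $E_0^p y^2 t^{-3/2}\cdot t^{-7/4}$. Finally, \eqref{u-sol-asymp-2} follows immediately: if $|y|\le r(t)=o(t^{3/4})$, then $y^2t^{-13/4}=o(t^{-7/4})$ and $|y|t^{-3/2}\to0$.
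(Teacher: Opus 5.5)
Your treatment of the asymptotic profile follows essentially the paper's route. You remove the dispersion through $|e^{it\xi^3}-1|\le t|\xi|^3$. In the early-time Duhamel part you freeze $x$ and replace $t-\tau$ by $t$, forming $Q(w)=M_1(w)+\int_0^\infty U(w,\tau)\,d\tau$. The first-order time expansion (via $\p_t\p_xK=\nu\p_x^3K-\e\p_y^2K$) produces the $y^2t^{-13/4}$ term. Finally, a Dollard splitting of $e^{i\xi(y-w)^2/(4\e t)}$ yields $\omega$ from the continuity of $\hat Q$ at $0$. Treating the factor $e^{i\xi w^2/(4\e t)}$ by dominated convergence instead of the cutoff $|w|\le t^\beta$ is a harmless variant.

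The genuine gap is the near-diagonal part $\tau\in[t/2,t]$ of the Duhamel integral. You need it for \eqref{u-sol-decay}, and again for your claims that the tail $\tau\ge t/2$ and the $S-K$ error in the Duhamel term are $o(t^{-7/4})$.

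The only nonlinear bound you use is $\|\p_xS(t-\tau)g\|_{L^\infty}\le C(t-\tau)^{-7/4}\|g\|_{L^1}$. Since $(t-\tau)^{-7/4}$ is not integrable at $\tau=t$, the integral $\int_0^t(t-\tau)^{-7/4}\|u^{p+1}(\tau)\|_{L^1}d\tau$ diverges for every $p$, not only $p=1$. Moving the derivative onto $u^2$ merely lowers the singularity to $(t-\tau)^{-5/4}$, which is still non-integrable. There is also no $L^\infty\to L^\infty$ bound to fall back on: by the Fresnel computation, the kernels of $S(t)$ and $K(t)$ depend on $(x,y)$ only through $x+y^2/(4\e t)$, so they are not in $L^1(\R^2)$.

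For $p=1$ an $L^1$-based bound fails on $[t/2,t]$ even away from $\tau=t$. Here $\|u^2\|_{L^1}=\|u\|_{L^2}^2$, and $\|u(t)\|_{L^2}\le C(1+t)^{-1}$ is false in general: the $\eta$-flow is unitary on $L^2$, and the linear rate is only $t^{-3/4}$ when $\int xu_0\,dx\not\equiv0$. So you must put $\|u\|_{L^\infty}$ itself into the estimate.

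What is missing is Lemma~\ref{lem.Duhamel-est-Linf}. Bound the $L^\infty$ norm by the $L^1$ norm of the Fourier transform and use Cauchy--Schwarz with weight $1+\eta^2$ to get
\[
\Bigl\|\int_{t/2}^{t}\p_xS(t-\tau)*g(\tau)\,d\tau\Bigr\|_{L^\infty}\le C\int_{t/2}^{t}(t-\tau)^{-\frac34}\bigl(\|g(\tau)\|_{L^2}^2+\|\p_yg(\tau)\|_{L^2}^2\bigr)^{\frac12}d\tau .
\]
Combine this with $\|u^{p+1}\|_{H^{0,1}}\le(p+1)\|u\|_{L^\infty}^p\|u\|_{H^{0,1}}$ and the decay $\|u(\tau)\|_{H^{0,1}}=o(\tau^{-1/2})$. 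That decay comes from the energy identities for $\|u\|_{L^2}^2+\|u_y\|_{L^2}^2$, which is eventually nonincreasing and integrable in time.

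This makes the near-diagonal contribution at most $\frac12H(t)t^{-2}$, i.e.\ linear in $H$ with a small coefficient. That is how the bootstrap closes with only $B(u_0)$ small; a merely bounded (non-decaying) $H^{0,1}$ norm would give only $t^{-3/2}$ when $p=1$.

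Relatedly, you need $\int_0^\infty\|u\|_{L^2}^2d\tau\le C\|\p_x^{-1}u_0\|_{L^2}^2$ on $[0,t/2]$ and for $U\in L^1$ (hence $Q\in L^1$). This comes from the energy estimate for $\p_x^{-1}u$ (with Gronwall and the a priori bound), not from a Duhamel decay argument ``obtained the same way''.
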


\begin{rem}
{\rm 
Under the assumptions of Theorem~\ref{thm.main}, $u_{0}(x, y)$ satisfies $(1+|x|)u_{0} \in L^{1}(\R^{2})$ and the zero-mass condition \eqref{zero-mass}. In this case, from some basic calculations based on \eqref{anti-int} and the integration by parts, we can easily get $\p_{x}^{-1}u_{0}\in L^{1}(\R^{2})$. Moreover, the following relation holds:  
\[
\int_{\R}\p_{x}^{-1}u_{0}(x, y)dx=\int_{\R}(-x)u_{0}(x, y)dx, \ \ {\rm a.e.} \ y\in \R. 
\]
Thus, we can see that $\mathcal{M}_{0}$ defined by \eqref{DEF-mathM} and $\mathcal{N}_{0}$ defined by \eqref{DEF-mathN} satisfy $\mathcal{M}_{0}=\mathcal{N}_{0}$. 
In other words, $(-x)u_{0}(x, y)$ is essentially the same as $\p_{x}^{-1}u_{0}(x, y)$. 
However, we remark that there is no inclusion relation between our assumptions and the assumptions in \cite{M99} and \cite{FH23} mentioned above, because they assumed that $u_{0}(x, y)$ satisfies not only $\p_{x}^{-1}u_{0}\in L^{1}(\R^{2})$ but also $\p_{x}^{-1}\p_{y}^{3}u_{0}\in L^{1}(\R^{2})$. 
}
\end{rem}

\begin{rem}
{\rm 
We emphasize that the asymptotic profile of the solution $u(x, y, t)$ to \eqref{KPB} cannot be described by a purely linear approximation alone. Although the leading term is expressed through a linear kernel $\p_{x}K(x, y, t)$,
its amplitude $\mathcal{N}_{0}$ is determined by the nonlinear evolution of the solution. 
}
\end{rem}

\begin{rem}
{\rm 
We note that the decay rate of the leading term of the solution $u(x,y,t)$ is $t^{-7/4}$, since $\|\partial_x K(t)\|_{L^\infty}=\|\partial_x K_*\|_{L^\infty}t^{-7/4}$ (also recall the uniform estimate (1.20)). Therefore, the first error term on the right hand side of \eqref{u-sol-asymp} is of lower order than the leading term if $|y|t^{-3/4}=o(1)$. On the other hand, since $\omega(\rho)\to 0$ as $\rho\to 0+$, the second error term $t^{-7/4}\omega(|y|t^{-3/2})$ is of lower order than $t^{-7/4}$ if $|y|t^{-3/2}=o(1)$. Thus, although two different anisotropic scales $t^{3/4}$ and $t^{3/2}$ appear in the error estimate \eqref{u-sol-asymp}, the former gives the stronger restriction. Consequently, the asymptotic profile $\mathcal{N}_{0}\partial_{x} K(x, y, t)$ describes the leading behavior of the solution in the region $|y|=o(t^{3/4})$, as stated in \eqref{u-sol-asymp-2}. In contrast, a different asymptotic description is required in the region where $|y|t^{-3/4}$ does not tend to zero. Such a phenomenon has also been observed for the generalized KP equations \eqref{gKP}. For details, see Theorem~1.2 in \cite{HNS99} and Corollary~1.2 in \cite{HN14}.
}
\end{rem}

In addition to the above results, it follows from the triangle inequality and \eqref{DEF-K} that 
\begin{align*}
\left\|u(t)\right\|_{L^{\infty}}
&\ge |u(0, 0, t)| \ge \left|\mathcal{N}_{0}(\p_{x}K)(0, 0, t)\right|-\left|u(0, 0, t)-\mathcal{N}_{0}\p_{x}K(0, 0, t)\right| \\
&= t^{-\frac{7}{4}}\left|(\p_{x}K_{*})(0, 0)\right|\left|\mathcal{N}_{0}\right|-\left|u(0, 0, t)-\mathcal{N}_{0}\p_{x}K(0, 0, t)\right|.  
\end{align*}
Therefore, taking the limit $t\to \infty$ in the above and using the asymptotic formula \eqref{u-sol-asymp} with \eqref{Rr-est}, we can obtain the following lower bound of the $L^{\infty}$-norm of the solution $u(x, y, t)$: 
\begin{cor}\label{cor.u-est-lower}
Under the assumptions in Theorem~\ref{thm.main}, the solution $u(x, y, t)$ to \eqref{KPB} satisfies  
\begin{equation*}
\liminf_{t\to \infty}t^{\frac{7}{4}}\left\|u(t)\right\|_{L^{\infty}} \ge \frac{\Gamma(\frac{5}{4})}{2^{\frac{5}{2}}\pi^{\frac{3}{2}}\nu^{\frac{5}{4}}}\left|\mathcal{N}_{0}\right|. 
\end{equation*}
\end{cor}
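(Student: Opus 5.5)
The plan is to follow the triangle-inequality argument indicated just before the statement, making two points precise: how the error term is controlled at the single point $(0,0)$, and the explicit value of $(\p_{x}K_{*})(0,0)$.

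\emph{Step 1 (error at the origin).} I will not evaluate \eqref{u-sol-asymp} pointwise at $y=0$. The reason is that $\omega$ is only known to satisfy $\omega(\rho)\to0$ as $\rho\to0+$; nothing is said about $\omega(0)$ itself. Instead I invoke \eqref{u-sol-asymp-2} with the admissible choice $r(t)\equiv1=o(t^{3/4})$. The supremum there runs over $|y|\le1$, which contains $y=0$, so
\[
\lim_{t\to\infty}t^{\frac{7}{4}}\left|u(0,0,t)-\mathcal{N}_{0}\p_{x}K(0,0,t)\right|=0 .
\]

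\emph{Step 2 (scaling).} From \eqref{DEF-K} we have $\p_{x}K(x,y,t)=t^{-\frac{7}{4}}(\p_{x}K_{*})(xt^{-\frac12},yt^{-\frac34})$, and therefore $\p_{x}K(0,0,t)=t^{-\frac74}(\p_{x}K_{*})(0,0)$. Combining this with $\|u(t)\|_{L^\infty}\ge|u(0,0,t)|$, the displayed triangle inequality, and Step 1, we obtain
\[
\liminf_{t\to\infty}t^{\frac74}\|u(t)\|_{L^\infty}\ge|(\p_{x}K_{*})(0,0)|\,|\mathcal{N}_{0}| .
\]

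\emph{Step 3 (computing the constant).} I differentiate \eqref{DEF-K} in $x$ under the integral sign. This is justified by dominated convergence, since the $x$-derivative of the integrand is bounded by $\nu^{-1/2}r^{1/4}e^{-r}\in L^{1}(0,\infty)$. This gives
\[
(\p_{x}K_{*})(x,y)=-\frac{1}{4\pi^{\frac32}\nu^{\frac34}}\int_{0}^{\infty}r^{-\frac14}e^{-r}\sqrt{\tfrac{r}{\nu}}\,\sin\!\left(x\sqrt{\tfrac{r}{\nu}}+\tfrac{y^{2}}{4\e}\sqrt{\tfrac{r}{\nu}}-\tfrac{\pi}{4}\e\right)dr .
\]
At $(0,0)$ the sine factor equals $\sin(-\frac{\pi}{4}\e)=-\e/\sqrt2$, because $\e\in\{-1,1\}$. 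Hence
\[
(\p_{x}K_{*})(0,0)=\frac{\e}{4\sqrt2\,\pi^{\frac32}\nu^{\frac54}}\int_{0}^{\infty}r^{\frac14}e^{-r}dr=\frac{\e\,\Gamma(\frac54)}{2^{\frac52}\pi^{\frac32}\nu^{\frac54}},
\]
and its absolute value is exactly the constant in the statement.

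There is no real obstacle, since everything rests on Theorem~\ref{thm.main}. The only point needing care is the one handled in Step 1: using \eqref{u-sol-asymp-2}, rather than \eqref{u-sol-asymp} at $y=0$, sidesteps any ambiguity about the value $\omega(0)$.
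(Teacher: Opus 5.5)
Your proof is correct and is essentially the paper's argument: the triangle inequality at $(0,0)$, the scaling $\p_xK(0,0,t)=t^{-7/4}(\p_xK_*)(0,0)$, and the error bound from Theorem~\ref{thm.main}; your explicit value $(\p_xK_*)(0,0)=\e\,\Gamma(\frac54)/(2^{5/2}\pi^{3/2}\nu^{5/4})$ supplies the constant the paper leaves unstated. Invoking \eqref{u-sol-asymp-2} with $r(t)\equiv1$ instead of \eqref{u-sol-asymp} at $y=0$ is harmless but not needed, because the paper's $\omega(\rho)=C_{0}\int_{\R}\min\{\rho|w|,1\}|Q(w)|dw$ satisfies $\omega(0)=0$, and the paper's derivation of \eqref{u-sol-asymp-2} from \eqref{u-sol-asymp} implicitly relies on that same fact.
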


\begin{rem}
{\rm 
By virtue of this result, we can conclude that the $L^{\infty}$-decay estimate \eqref{u-sol-decay} is optimal with respect to the decay rate $t^{-7/4}$, if $\mathcal{N}_{0}\neq0$ is satisfied. A sufficient condition for $\mathcal{N}_{0}\neq0$ is discussed in \cite{FH23}, and the similar condition also can be derived under the current assumptions. Roughly speaking, an additional smallness assumption on the initial data is a key to get such a condition. For details, see Remarks~3.8 and 3.9 in \cite{FH23}. 
On the other hand, if $p\in \mathbb{N}$ is odd, $\mathcal{N}_{0}\neq0$ can be realized without imposing the additional smallness on the initial data, except that $B(u_{0})$ is small. 
Actually, in this case, the second term in the right hand side of \eqref{DEF-mathN} is always negative. 
Therefore, by choosing $u_{0}(x, y)$ such that $\int_{\R^{2}}xu_{0}(x, y)dxdy\ge0$, then $\mathcal{N}_{0}<0$ is guaranteed. 
A typical example of such an initial data is $u_{0}(x, y)=cxe^{-(x^{2}+y^{2})}$, where $c>0$ is a small constant. 
}
\end{rem}

The rest of this paper is organized as follows. In Section 2, we prepare several preliminary results. In particular, we establish the detailed asymptotic expansions for the linearized equation and derive several key estimates. In Section 3, we prove the main results of this paper. We first establish the optimal decay estimate of the solution. After that, we derive the explicit asymptotic profile of the solution by analyzing the integral equation \eqref{integral-eq} below. 
The main novelty of this paper is the derivation of an explicit asymptotic profile whose structure can be described in a precise and explicit form.
In particular, we show that the leading term is given by the derivative of the fundamental solution to \eqref{w-linear} below and that its amplitude is determined by a nonlinear quantity associated with the initial data $u_{0}(x, y)$. Moreover, our analysis removes some restrictive assumptions imposed in previous works \cite{M99} and \cite{FH23}, and provides a more complete description of the large time behavior of the solution $u(x, y, t)$ to \eqref{KPB}.

\bigskip
\par\noindent
\textbf{\bf{Notations.}} 

\medskip
We define the Fourier transform of $f$ and the inverse Fourier transform of $g$ as follows:
\[
\hat{f}(\xi, \eta) = \mathcal{F}[f](\xi, \eta) := \frac{1}{2\pi}\int_{\R^{2}}e^{-ix\xi-iy\eta}f(x, y)dxdy, \ \ 
\mathcal{F}^{-1}[g](x, y):=\frac{1}{2\pi}\int_{\R^{2}}e^{ix\xi+iy\eta}g(\xi, \eta)d\xi d\eta.
\]
We also define the anisotropic Fourier transforms $\mathcal{F}_x$, $\mathcal{F}^{-1}_{\xi}$, $\mathcal{F}_y$ and $\mathcal{F}^{-1}_{\eta}$ as follows:
\begin{align*}
&\mathcal{F}_x[f](\xi, y) := \frac{1}{\sqrt{2\pi}}\int_{\R}e^{-ix\xi}f(x, y)dx, \ \ 
\mathcal{F}_{\xi}^{-1}[g](x, y):=\frac{1}{\sqrt{2\pi}}\int_{\R}e^{ix\xi}g(\xi, y)d\xi, \\
&\mathcal{F}_y[f](x, \eta) := \frac{1}{\sqrt{2\pi}}\int_{\R}e^{-iy\eta}f(x, y)dy, \ \ 
\mathcal{F}_{\eta}^{-1}[g](x, y):=\frac{1}{\sqrt{2\pi}}\int_{\R}e^{iy\eta}g(x, \eta)d\eta.
\end{align*}

For $1\le p\le \infty$, $L^{p}(\R^{2})$ means the Lebesgue spaces. 
Also, the Schwartz space and its dual space are denoted by $\mathcal{S}(\R^{2})$ and $\mathcal{S}'(\R^{2})$, respectively. 
Moreover, for $s\in \R$, $H^{s}(\R^{2})$ denotes the Sobolev spaces. In addition, for $s_{1}\in \R$ and $s_{2} \in \R$, we use the following anisotropic Sobolev spaces: 
\begin{align*}
H^{s_{1}, s_{2}}(\R^{2}):=\left\{f \in \mathcal{S}'(\R^{2}); \ \left\|f\right\|_{H^{s_{1}, s_{2}}}:=\left\|\left(1+\xi^{2}\right)^{\frac{s_{1}}{2}}\left(1+\eta^{2}\right)^{\frac{s_{2}}{2}}\hat{f}(\xi, \eta)\right\|_{L^{2}_{\xi \eta}}<\infty \right\}. 
\end{align*}
Furthermore, we also use the additional function spaces $\dot{\mathbb{H}}^{-m}_{x}(\R^{2})$ and $X^{s}(\R^{2})$ being defined by \eqref{space-Hm} and \eqref{space-Xs}, respectively. The anti-derivative operator $\p_{x}^{-1}$ is defined by \eqref{anti-derivative}.

\medskip
Let $T>0$, $1\le q \le \infty$ and $X$ be a Banach space. Then, $L^{q}(0,T; X)$ denotes the space of all measurable functions $u: (0, T) \to X$ such that $\|u(t)\|_{X}$ belongs to $L^{q}(0, T)$. Also, $C([0, T]; X)$ denotes the subspace of $L^{\infty}(0, T; X)$ of all continuous functions $u: [0, T] \to X$. Moreover, $C_{b}([0, \infty); X)$ is defined as the space of all continuous and uniformly bounded functions $u: [0, \infty) \to X$. 

\medskip
Throughout this paper, $C$ denotes various positive constants, which may vary from line to line during computations. Also, it may depend on the norm of the initial data. However, we note that it does not depend on the space variable $(x, y)$ and the time variable $t$. 

\section{Preliminaries}  

\indent

In this section, we would like to introduce some preliminary results for the analysis of \eqref{KPB}. 
Most of the results given in this section are generalizations or refinements of those obtained in \cite{M99} and \cite{FH23}.
First of all, in order to derive the decay estimate and the asymptotic profile of the solution, let us rewrite \eqref{KPB} into the corresponding integral equation. 
Throughout this paper, all analyses related to the main results are based on the following equation: 
\begin{equation}\label{integral-eq}
u(t) = S(t)*u_{0} -\frac{1}{p+1}\int_{0}^{t} \p_{x}S(t-\tau)*u^{p+1}(\tau)d\tau, 
\end{equation}
where the integral kernel $S(x, y, t)$ is defined by 
\begin{equation}\label{DEF-S}
S(x, y, t) := \frac{1}{2\pi}\mathcal{F}^{-1} \left[ e^{ - \nu t \xi^{2} + it\left(\xi^{3} - \e \frac{\eta^{2}}{\xi} \right) } \right] (x, y), \ \ (x, y)\in \R^{2}, \ t>0. 
\end{equation}

\subsection{Linear Analysis}  

\indent

First, let us analyze the linear part of \eqref{integral-eq}. 
We start with introducing the following $L^{\infty}$-decay estimates for the integral kernel $S(x, y, t)$, given in \cite{M99}. 
For the proofs, see Lemma~4.3 in \cite{M99}.
\begin{prop}\label{prop.L-decay-S}
Let $l$ be a non-negative integer. Then, we have 
\begin{equation*}
\left\| \p_{x}^{l} S(t) \right\|_{L^{\infty}} \le C t^{ -\frac{5}{4} -\frac{l}{2} }, \ \ t>0. 
\end{equation*}
\end{prop}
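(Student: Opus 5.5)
The plan is to compute $\partial_x^l S$ explicitly in the $\eta$ variable and then bound what remains by a one-dimensional integral in $\xi$. By \eqref{DEF-S}, up to a constant we have
\[
\p_{x}^{l}S(x,y,t)=\frac{1}{(2\pi)^{2}}\int_{\R}(i\xi)^{l}e^{ix\xi-\nu t\xi^{2}+it\xi^{3}}\left(\int_{\R}e^{iy\eta-i\e t\frac{\eta^{2}}{\xi}}d\eta\right)d\xi.
\]
For fixed $\xi\neq0$, the inner integral is an oscillatory Gaussian (Fresnel) integral. It should be understood as the limit of $\int e^{iy\eta-(\delta+i\e t/\xi)\eta^{2}}d\eta$ as $\delta\to0+$, or equivalently in $\mathcal{S}'$. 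Evaluating it gives
\[
\sqrt{\frac{\pi|\xi|}{t}}\,e^{-i\frac{\pi}{4}\e\,\mathrm{sgn}\,\xi}\,e^{i\e\frac{\xi y^{2}}{4t}}.
\]
So the $\eta$-integration produces a modulus factor $\sqrt{\pi}\,|\xi|^{1/2}t^{-1/2}$ times a unimodular phase, which is the same mechanism as for the Schrödinger kernel. We first justify the exchange of integrals with the regularization $e^{-\delta\eta^{2}}$ and dominated convergence. This is legitimate because, after the $\eta$-integration, the $\xi$-integrand is dominated by $|\xi|^{l+1/2}e^{-\nu t\xi^{2}}$, which is integrable for $t>0$.

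Next, taking absolute values and discarding all phases ($e^{ix\xi}$, $e^{it\xi^{3}}$, $e^{i\e\xi y^{2}/(4t)}$ and the $\pi/4$ phase), we get, uniformly in $(x,y)$,
\[
|\p_{x}^{l}S(x,y,t)|\le Ct^{-\frac12}\int_{\R}|\xi|^{l+\frac12}e^{-\nu t\xi^{2}}d\xi.
\]
Substituting $\xi=t^{-1/2}\zeta$, the integral equals $t^{-\frac{l}{2}-\frac34}\int|\zeta|^{l+1/2}e^{-\nu\zeta^{2}}d\zeta$. The last integral is a finite Gamma-function value, $\nu^{-(l+3/2)/2}\Gamma\big(\tfrac{l}{2}+\tfrac34\big)$. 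Multiplying by the prefactor $t^{-1/2}$ gives $\|\p_x^lS(t)\|_{L^\infty}\le Ct^{-\frac54-\frac l2}$, which is the claim.

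The main obstacle is rigor in the Fresnel step. The function $e^{-i\e t\eta^{2}/\xi}$ is not integrable in $\eta$, and $S$ is initially defined only as a tempered distribution. I would handle this by inserting $e^{-\delta\eta^{2}}$, computing the complex Gaussian integral exactly with the principal branch of $\sqrt{\cdot}$, noting that its modulus is bounded by $\sqrt{\pi}\,|\xi|^{1/2}t^{-1/2}$ uniformly in $\delta$, and then letting $\delta\to0$ in $\mathcal{S}'$. The resulting function is continuous and bounded, so the pointwise bound identifies the distribution with a bounded function and yields the $L^\infty$ estimate.
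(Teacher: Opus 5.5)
Your proof is correct. The Fresnel evaluation of the $\eta$-integral gives $\sqrt{\pi|\xi|/t}$ times a unimodular phase, and the modulus bound is uniform in $\delta$. The final scaling $\int_{\R}|\xi|^{l+\frac12}e^{-\nu t\xi^{2}}d\xi=\Gamma\bigl(\frac l2+\frac34\bigr)(\nu t)^{-\frac l2-\frac34}$ also checks out.

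The paper does not prove this statement itself; it cites Lemma~4.3 of \cite{M99}. Your argument is, however, essentially the computation the paper carries out for the remainder $\Phi$ in the proof of Proposition~\ref{prop.L-ap-S} and for $K$ in \eqref{K-rewrite-1}. Your $\delta$-regularization supplies the distributional justification that the paper leaves implicit.
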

\begin{cor}\label{cor.L-decay-LKPB}
Let $l$ be a non-negative integer. If $u_{0}\in L^{1}(\R^{2})$, then we have  
\begin{equation*}
\left\| \p_{x}^{l}S(t)*u_{0}\right\|_{L^{\infty}} \le C\left\|u_{0}\right\|_{L^{1}}t^{-\frac{5}{4}-\frac{l}{2}}, \ \ t>0. 
\end{equation*}
\end{cor}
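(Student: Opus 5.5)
This follows directly from Proposition~\ref{prop.L-decay-S} and Young's inequality for convolutions.

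\textbf{Step 1 (moving the derivative).} Since $u_{0}\in L^{1}(\R^{2})$, the convolution $S(t)*u_{0}$ is well defined for each fixed $t>0$. We justify
\[
\p_{x}^{l}\bigl(S(t)*u_{0}\bigr)=(\p_{x}^{l}S(t))*u_{0}.
\]
This holds in $\mathcal{S}'(\R^{2})$: on the Fourier side both expressions equal $2\pi\cdot\frac{1}{2\pi}(i\xi)^{l}e^{-\nu t\xi^{2}+it(\xi^{3}-\e\eta^{2}/\xi)}\hat{u}_{0}(\xi,\eta)$. For this we use that $\hat{u}_{0}$ is bounded and continuous, and that the factor $e^{-\nu t\xi^{2}}$ makes the symbol a tempered function.

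\textbf{Step 2 (Young's inequality).} Applying Young's inequality with exponents $(\infty,1)$ and then Proposition~\ref{prop.L-decay-S}, we obtain
\[
\left\|\p_{x}^{l}S(t)*u_{0}\right\|_{L^{\infty}}\le \left\|\p_{x}^{l}S(t)\right\|_{L^{\infty}}\left\|u_{0}\right\|_{L^{1}}\le C\left\|u_{0}\right\|_{L^{1}}t^{-\frac{5}{4}-\frac{l}{2}},
\]
which is the claimed estimate.

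\textbf{Main obstacle.} The only delicate point is the meaning of $\p_{x}^{l}S(t)$ as a bounded function. Because of the singular phase $\eta^{2}/\xi$, the symbol is not integrable in $\eta$. Even so, Proposition~\ref{prop.L-decay-S} (Lemma~4.3 in \cite{M99}) asserts that $\p_{x}^{l}S(t)\in L^{\infty}$. One sees this by computing the $\eta$-integral as an oscillatory Gaussian, which yields the factor $|\xi|^{1/2}(\e t)^{-1/2}$; the remaining $\xi$-integral then converges thanks to $e^{-\nu t\xi^{2}}$.

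With $\p_{x}^{l}S(t)$ understood as this bounded, locally integrable function, the convolution with an $L^{1}$ function is defined pointwise, and it agrees with the distributional definition used in Step 1 by a standard approximation of $u_{0}$ by Schwartz functions. The argument therefore reduces to the cited kernel bound.
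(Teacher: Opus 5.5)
Your proposal is correct and is exactly the argument the paper has in mind: the corollary is stated without proof as an immediate consequence of Proposition~\ref{prop.L-decay-S} together with Young's inequality $\|f*g\|_{L^{\infty}}\le\|f\|_{L^{\infty}}\|g\|_{L^{1}}$. Your additional remarks are valid supplementary justifications that the paper leaves implicit, but nothing beyond the cited kernel bound is needed: commuting $\p_{x}^{l}$ with the convolution, and the Fresnel-type $\eta$-integral (which the paper computes explicitly in the proof of Proposition~\ref{prop.L-ap-S}).
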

Next, we shall construct the asymptotic profile for the integral kernel $S(x, y, t)$. In order to state such a result, let us consider the following auxiliary Cauchy problem:
\begin{align}\label{w-linear}
\begin{split}
& \psi_{t} + \e \p_{x}^{-1}\psi_{yy} -\nu \psi_{xx} = 0, \ \ (x, y) \in \R^{2}, \ t>0,\\
& \psi(x, y, 0) = u_{0}(x, y), \ \ (x, y) \in \R^{2}. 
\end{split}
\end{align}
By using the integral kernel $K(x, y, t)$ defined by \eqref{DEF-K} introduced in the first section, the solution to this Cauchy problem can be written as follows (for details, see \cite{FH23}):
\begin{equation}\label{w-sol}
\psi(x, y, t)=(K(t)*u_{0})(x, y), \ \ (x, y) \in \R^{2}, \ t>0. 
\end{equation}
Moreover, for the integral kernel $K(x, y, t)$ and the solution $\psi(x, y, t)$ to \eqref{w-linear}, we can get the following $L^{\infty}$-decay estimates similar to Proposition~\ref{prop.L-decay-S} and Corollary~\ref{cor.L-decay-LKPB}: 
\begin{prop}\label{prop.L-decay-K}
Let $l$ be a non-negative integer. Then, we have 
\begin{equation*}
\left\| \p_{x}^{l} K(t) \right\|_{L^{\infty}} = \left\|\p_{x}^{l}K_{*}\right\|_{L^{\infty}}t^{ -\frac{5}{4} -\frac{l}{2} }, \ \ t>0. 
\end{equation*}
\end{prop}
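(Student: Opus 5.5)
The identity will follow from the self-similar form \eqref{DEF-K} by a change of variables; the only substantive point is that $K_{*}$ is smooth enough in $x$ for all the quantities $\|\p_{x}^{l}K_{*}\|_{L^{\infty}}$ to be finite.

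\textbf{Scaling.} Since $K(x,y,t)=t^{-5/4}K_{*}(xt^{-1/2},yt^{-3/4})$, differentiating $l$ times in $x$ gives, by the chain rule,
\[
\p_{x}^{l}K(x,y,t)=t^{-\frac{5}{4}-\frac{l}{2}}(\p_{x}^{l}K_{*})\left(xt^{-\frac{1}{2}},yt^{-\frac{3}{4}}\right).
\]
The map $(x,y)\mapsto(xt^{-1/2},yt^{-3/4})$ is a bijection of $\R^{2}$ for every fixed $t>0$. Hence the supremum over $(x,y)$ of the right-hand side is exactly $t^{-5/4-l/2}\|\p_{x}^{l}K_{*}\|_{L^{\infty}}$, which is the claimed equality.

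\textbf{Differentiating $K_{*}$.} We must check that $\p_{x}^{l}K_{*}$ exists and is bounded. In \eqref{DEF-K} the integrand depends on $x$ only through $\cos(x\sqrt{r/\nu}+\cdots)$. Each $x$-derivative therefore produces a factor $(r/\nu)^{1/2}$ and shifts the phase, so the $l$-th derivative of the integrand is bounded in absolute value by
\[
\nu^{-\frac{3}{4}-\frac{l}{2}}r^{-\frac{1}{4}+\frac{l}{2}}e^{-r}.
\]
This bound is independent of $(x,y)$ and integrable on $(0,\infty)$: it behaves like $r^{-1/4}$ near $0$ and is killed by $e^{-r}$ at infinity. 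By the dominated convergence theorem we may differentiate under the integral sign, and
\[
\|\p_{x}^{l}K_{*}\|_{L^{\infty}}\le\frac{\Gamma(\frac{3}{4}+\frac{l}{2})}{4\pi^{3/2}\nu^{\frac{3}{4}+\frac{l}{2}}}<\infty .
\]

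\textbf{Main obstacle.} The only delicate step is justifying differentiation under the integral sign near $r=0$. The uniform dominating function above settles this, including the case $l=0$, where the integrand is $O(r^{-1/4})$. The equality sign in the statement, as opposed to an inequality, comes from the bijectivity of the scaling map.
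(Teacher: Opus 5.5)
Your proof is correct and is the scaling argument the paper relies on. The paper gives no written proof and treats the identity as immediate from the self-similar form \eqref{DEF-K}. Your extra check that $\p_{x}^{l}K_{*}$ is bounded, via the uniform dominating function $r^{-\frac{1}{4}+\frac{l}{2}}e^{-r}$, supplies the one detail the paper leaves unstated.
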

\begin{cor}\label{cor.L-decay-Lw}
Let $l$ be a non-negative integer. If $u_{0}\in L^{1}(\R^{2})$, then we have  
\begin{equation*}
\left\| \p_{x}^{l}K(t)*u_{0}\right\|_{L^{\infty}} \le C\left\|u_{0}\right\|_{L^{1}}t^{-\frac{5}{4}-\frac{l}{2}}, \ \ t>0. 
\end{equation*}
\end{cor}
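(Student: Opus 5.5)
The plan is to combine Proposition~\ref{prop.L-decay-K} with Young's inequality for convolutions, i.e.\ $\|f*g\|_{L^{\infty}}\le \|f\|_{L^{\infty}}\|g\|_{L^{1}}$. For $t>0$ and $u_{0}\in L^{1}(\R^{2})$ I will first check that
\[
\p_{x}^{l}\left(K(t)*u_{0}\right)=\left(\p_{x}^{l}K(t)\right)*u_{0}.
\]
Granting this identity, the estimate follows in one line:
\[
\left\|\p_{x}^{l}K(t)*u_{0}\right\|_{L^{\infty}}\le \left\|\p_{x}^{l}K(t)\right\|_{L^{\infty}}\left\|u_{0}\right\|_{L^{1}}=\left\|\p_{x}^{l}K_{*}\right\|_{L^{\infty}}\left\|u_{0}\right\|_{L^{1}}t^{-\frac{5}{4}-\frac{l}{2}}.
\]
So it suffices to take $C:=\left\|\p_{x}^{l}K_{*}\right\|_{L^{\infty}}$, once we know this quantity is finite.

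The only point needing care, and the one I would check explicitly, is that $\p_{x}^{l}K_{*}$ exists and is uniformly bounded on $\R^{2}$. I would differentiate the integral representation \eqref{DEF-K} under the integral sign. Each $x$-derivative of the cosine produces a factor $\sqrt{r/\nu}$ and turns the cosine into $\pm\sin$ or $\pm\cos$ of the same argument. Hence
\[
\left|\p_{x}^{l}K_{*}(x, y)\right|\le \frac{1}{4\pi^{\frac{3}{2}}\nu^{\frac{3}{4}+\frac{l}{2}}}\int_{0}^{\infty}r^{-\frac{1}{4}+\frac{l}{2}}e^{-r}dr=\frac{\Gamma\left(\frac{3}{4}+\frac{l}{2}\right)}{4\pi^{\frac{3}{2}}\nu^{\frac{3}{4}+\frac{l}{2}}}<\infty,
\]
uniformly in $(x, y)$.

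The differentiation under the integral sign is justified by dominated convergence. The dominating function $r^{-1/4+l/2}e^{-r}$ is integrable on $(0,\infty)$ and independent of $(x, y)$.

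Finally, the same uniform bound shows that $\p_{x}^{j}K(t)$ is bounded and continuous for every $j\le l$ and each fixed $t>0$. Since $u_{0}\in L^{1}(\R^{2})$, dominated convergence then allows the $x$-derivatives to be moved onto the kernel inside the convolution $\int_{\R^{2}}K(x-x', y-y', t)u_{0}(x', y')dx'dy'$. This justifies the identity used at the start and completes the proof.
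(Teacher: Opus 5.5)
Your proposal is correct and follows the paper's implicit argument: the corollary is stated without proof as an immediate consequence of the $L^{\infty}$--$L^{1}$ Young inequality combined with the scaling identity in Proposition~\ref{prop.L-decay-K}. Your check that $\|\partial_x^{l}K_{*}\|_{L^{\infty}}\le \Gamma(\frac{3}{4}+\frac{l}{2})/(4\pi^{3/2}\nu^{3/4+l/2})$, obtained by differentiating under the integral sign, supplies a finiteness detail the paper leaves unstated.
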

Furthermore, we are able to show that $S(x, y, t)$ can be well approximated by $K(x, y, t)$ as follows. 
The following asymptotic formula is a direct generalization of Proposition~2.5 in \cite{FH23}. 
\begin{prop}\label{prop.L-ap-S}
Let $l$ and $m$ be non-negative integers. Then, we have 
\begin{equation}\label{L-ap-S}
\left\| \p_{x}^{l} \left( S(t) - \sum_{n=0}^{m}\frac{(-t)^{n}}{n!}\p_{x}^{3n}K(t) \right) \right\|_{L^{\infty}} \le C t^{ -\frac{7}{4} -\frac{l+m}{2} }, \ \ t>0, 
\end{equation}
where $S(x, y, t)$ and $K(x, y, t)$ are defined by \eqref{DEF-S} and \eqref{DEF-K}, respectively. 
\end{prop}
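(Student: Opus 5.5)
The estimate will be proved on the Fourier side by a Taylor expansion of the dispersive factor $e^{it\xi^{3}}$. Since $S$ and $K$ share the factor $e^{-\nu t\xi^{2}-i\e t\eta^{2}/\xi}$ (this is exactly how $K$ arises as the fundamental solution of \eqref{w-linear}), we have
\[
\hat{S}(\xi,\eta,t)=e^{it\xi^{3}}\hat{K}(\xi,\eta,t),\qquad \widehat{\p_{x}^{3n}K}=(i\xi)^{3n}\hat{K}=(-i\xi^{3})^{n}\hat{K}.
\]
Hence $\frac{(-t)^{n}}{n!}\widehat{\p_{x}^{3n}K}=\frac{(it\xi^{3})^{n}}{n!}\hat K$. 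The quantity to estimate then has Fourier transform
\[
(i\xi)^{l}\Bigl(e^{it\xi^{3}}-\sum_{n=0}^{m}\frac{(it\xi^{3})^{n}}{n!}\Bigr)\hat{K}(\xi,\eta,t).
\]
The plan is to bound the $L^{\infty}$ norm of this function by an $L^{1}$ integral over $(\xi,\eta)$, with the $\eta$ integration treated carefully.

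The key point is that $\hat{K}$ contains the purely oscillating factor $e^{-i\e t\eta^{2}/\xi}$, which is not integrable in $\eta$. So one cannot simply take absolute values. Instead, we first perform the $\eta$-integration exactly, as a Fresnel (Gaussian) integral:
\[
\int_{\R}e^{iy\eta-i\e t\eta^{2}/\xi}d\eta=\sqrt{\frac{\pi|\xi|}{t}}\,e^{-i\frac{\pi}{4}\e\,\mathrm{sgn}\xi}\,e^{i\e\xi y^{2}/(4t)}.
\]
This has modulus $C|\xi|^{1/2}t^{-1/2}$, uniformly in $y$. It is exactly the mechanism producing the profile $K_{*}$ in \eqref{DEF-K}, and the same reduction presumably underlies Proposition~\ref{prop.L-decay-K}.

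After this reduction, the function in question becomes a one-dimensional integral in $\xi$ with integrand of modulus
\[
C t^{-1/2}|\xi|^{l+1/2}\Bigl|e^{it\xi^{3}}-\sum_{n=0}^{m}\frac{(it\xi^{3})^{n}}{n!}\Bigr|e^{-\nu t\xi^{2}}.
\]
The Taylor remainder bound $|e^{i\theta}-\sum_{n\le m}(i\theta)^{n}/n!|\le|\theta|^{m+1}/(m+1)!$ then gives
\[
\Bigl\|\p_{x}^{l}\Bigl(S(t)-\sum_{n=0}^{m}\frac{(-t)^{n}}{n!}\p_{x}^{3n}K(t)\Bigr)\Bigr\|_{L^{\infty}}\le Ct^{-\frac12}\int_{\R}|\xi|^{l+\frac12}\,(t|\xi|^{3})^{m+1}e^{-\nu t\xi^{2}}d\xi .
\]
Substituting $\xi=t^{-1/2}\zeta$, the right-hand side equals
\[
Ct^{-\frac12}\,t^{m+1}\,t^{-\frac{l+1/2+3m+3+1}{2}}=Ct^{-\frac74-\frac{l+m}{2}},
\]
which is \eqref{L-ap-S}. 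The exponent bookkeeping gives $-\tfrac12+(m+1)-\tfrac{l}{2}-\tfrac{3m}{2}-\tfrac{9}{4}=-\tfrac74-\tfrac{l+m}{2}$.

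The main obstacle is rigor in exchanging the order of integration. The $\eta$-integral is only conditionally convergent, and both $S$ and the combination $\hat{S}-\sum_{n\le m}$ lie in $\mathcal{S}'$ rather than $L^{1}$. We will handle this by inserting a Gaussian cutoff $e^{-\delta\eta^{2}}$. With the cutoff, the $\eta$-integral is an honest complex Gaussian with parameter $\delta+i\e t/\xi$, whose modulus is bounded by $\sqrt{\pi}\,|\delta+i\e t/\xi|^{-1/2}\le C|\xi|^{1/2}t^{-1/2}$ uniformly in $\delta$. We then let $\delta\to0$ by dominated convergence in $\xi$, using the integrable majorant displayed above. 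One also checks that the singularity at $\xi=0$ is harmless, since the weight $|\xi|^{l+1/2}$ vanishes there. This mirrors the argument of Proposition~2.5 in \cite{FH23}, which is the case $m=0$.
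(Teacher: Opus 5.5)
Your proposal is correct and follows essentially the same route as the paper: Taylor-expand $e^{it\xi^{3}}$ against $\hat{K}$, evaluate the $\eta$-integral exactly as a Fresnel integral to obtain the factor $|\xi|^{1/2}t^{-1/2}$, and then bound the remaining $\xi$-integral in absolute value and rescale. The differences are minor. You use the modulus bound $|\theta|^{m+1}/(m+1)!$ where the paper uses the integral form of the Taylor remainder, and the two give the identical estimate. You also add a Gaussian-cutoff justification for interchanging the integrals, which the paper leaves implicit.
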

\begin{proof}
First, substituting $f(\theta)=e^{i\theta t\xi^{3}}$ into the following Taylor's theorem with integral remainder: 
\begin{align*}
f(1)=\sum_{n=0}^{m}\frac{1}{n!}f^{(n)}(0)+\frac{1}{m!}\int_{0}^{1}(1-\theta)^{m}f^{(m+1)}(\theta)d\theta, 
\end{align*}
then we have
\[
e^{it\xi^{3}}=\sum_{n=0}^{m}\frac{(it\xi^{3})^{n}}{n!}+\frac{(it\xi^{3})^{m+1}}{m!}\int_{0}^{1}(1-\theta)^{m}e^{i\theta t\xi^{3}}d\theta. 
\]
Therefore, it follows from the definitions of $S(x, y, t)$ in \eqref{DEF-S} and $K(x, y, t)$ in \eqref{DEF-K} that 
\begin{align*}
\hat{S}(\xi, \eta, t)
&=\frac{1}{2\pi}\sum_{n=0}^{m}\frac{(it\xi^{3})^{n}}{n!}e^{-\nu t\xi^{2}-it\e \frac{\eta^{2}}{\xi}}+\frac{(it\xi^{3})^{m+1}}{2\pi m!}e^{-\nu t\xi^{2}-it\e \frac{\eta^{2}}{\xi}}\int_{0}^{1}(1-\theta)^{m}e^{i\theta t\xi^{3}}d\theta \\
&=\sum_{n=0}^{m}\frac{(-t)^{n}}{n!}(i\xi)^{3n}\hat{K}(\xi, \eta, t)
+\frac{(it\xi^{3})^{m+1}}{2\pi m!}e^{-\nu t\xi^{2}-it\e \frac{\eta^{2}}{\xi}}\int_{0}^{1}(1-\theta)^{m}e^{i\theta t\xi^{3}}d\theta. 
\end{align*}

Now, taking the inverse Fourier transform of both sides of the above, we have 
\begin{align}
&S(x, y, t) - \sum_{n=0}^{m}\frac{(-t)^{n}}{n!}\p_{x}^{3n}K(x, y, t) \nonumber \\
&= \frac{(it)^{m+1}}{(2\pi)^{2}m!}\int_{0}^{1}(1-\theta)^{m}\left(\int_{\R}\int_{\R}\xi^{3(m+1)}e^{-\nu t\xi^{2}-it\e \frac{\eta^{2}}{\xi}+i\theta t\xi^{3}}e^{ix\xi+iy\eta}d\xi d\eta \right)d\theta \nonumber  \\
&= \frac{(it)^{m+1}}{(2\pi)^{\frac{3}{2}}m!}\int_{0}^{1}(1-\theta)^{m}\left(\int_{\R}\xi^{3(m+1)}e^{-\nu t\xi^{2}+i\theta t\xi^{3}+ix\xi}\mathcal{F}^{-1}_{\eta}\left[e^{-it\e \frac{\eta^{2}}{\xi}}\right](y)d\xi \right)d\theta  \nonumber \\
&=\frac{(it)^{m+1}t^{-\frac{1}{2}}}{4\pi^{\frac{3}{2}}m!}\int_{0}^{1}(1-\theta)^{m}\left(\int_{\R}|\xi|^{\frac{1}{2}}\xi^{3(m+1)}e^{-\nu t\xi^{2}+i\theta t\xi^{3}+ix\xi}e^{\frac{i\xi y^{2}}{4t\e}-\frac{i\pi}{4}\e \mathrm{sgn \xi}}d\xi \right)d\theta \nonumber  \\
&=:\Phi(x, y, t),  \label{DEF-R}
\end{align}
where we used the following fact: 
\begin{align}
\mathcal{F}^{-1}_{\eta} \left[ e^{-it\e \frac{\eta^{2}}{\xi}} \right](y) 
& = \frac{1}{\sqrt{2\pi}} \int_{\R}e^{-it\e \frac{\eta^{2}}{\xi}} e^{iy\eta} d\eta 
= \frac{1}{\sqrt{2\pi}} e^{ \frac{i\xi y^{2}}{4t\e} } \int_{\R} e^{ -\frac{it\e}{\xi}\left( \eta - \frac{\xi y}{2t\e} \right)^{2} } d\eta \nonumber \\
& = \frac{1}{\sqrt{2\pi}}\left( \frac{\xi \pi}{it\e} \right)^{\frac{1}{2}} e^{ \frac{i\xi y^{2}}{4t\e} }
= \sqrt{\frac{|\xi|}{2t}} e^{ \frac{i\xi y^{2}}{4t\e} - \frac{i\pi}{4}\e \mathrm{sgn} \xi }. \nonumber 
\end{align}

Finally, we directly evaluate the remainder term $\Phi(x, y, t)$ as follows: 
\begin{align}
\left|\p_{x}^{l}\Phi(x, y, t)\right|
&\le \frac{t^{\frac{1}{2}+m}}{2\pi^{\frac{3}{2}}m!}\int_{0}^{1}(1-\theta)^{m}d\theta \int_{0}^{\infty}\xi^{l+3m+\frac{7}{2}}e^{-\nu t\xi^{2}}d\xi \\
&=\frac{t^{-\frac{7}{4}-\frac{l+m}{2}}}{4\pi^{\frac{3}{2}}\nu^{\frac{l+3m}{2}+\frac{9}{4}}(m+1)!}\int_{0}^{\infty}r^{\frac{l+3m}{2}+\frac{5}{4}}e^{-r}dr \nonumber \\
&=\frac{\Gamma\left(\frac{l+3m}{2}+\frac{9}{4}\right)}{4\pi^{\frac{3}{2}}\nu^{\frac{l+3m}{2}+\frac{9}{4}}(m+1)!}t^{-\frac{7}{4}-\frac{l+m}{2}}, \ \ (x, y)\in \R^{2}, \ t>0. \label{est-R}
\end{align}
Thus, combining \eqref{DEF-R} and \eqref{est-R}, we arrive at the desired result \eqref{L-ap-S}. 
\end{proof}
\begin{cor}\label{cor.L-decay-LKPB-ap}
Let $l$ and $m$ be non-negative integers. If $u_{0}\in L^{1}(\R^{2})$, then we have  
\begin{equation*}
\left\| \p_{x}^{l}\left(S(t)*u_{0} - \sum_{n=0}^{m}\frac{(-t)^{n}}{n!}\p_{x}^{3n}K(t)*u_{0}\right) \right\|_{L^{\infty}} \le C\left\|u_{0}\right\|_{L^{1}}t^{-\frac{7}{4}-\frac{l+m}{2}}, \ \ t>0, 
\end{equation*}
where $S(x, y, t)$ and $K(x, y, t)$ are defined by \eqref{DEF-S} and \eqref{DEF-K}, respectively. 
\end{cor}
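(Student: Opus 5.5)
The plan is to derive the corollary from Proposition~\ref{prop.L-ap-S} by an $L^{1}$--$L^{\infty}$ Young inequality, in the same way that Corollaries~\ref{cor.L-decay-LKPB} and \ref{cor.L-decay-Lw} follow from Propositions~\ref{prop.L-decay-S} and \ref{prop.L-decay-K}. Fix non-negative integers $l,m$ and set
\[
\Phi_{m}(x, y, t):=S(x, y, t)-\sum_{n=0}^{m}\frac{(-t)^{n}}{n!}\p_{x}^{3n}K(x, y, t),
\]
which is the remainder $\Phi$ defined in \eqref{DEF-R}. Convolution is linear and $\p_{x}^{l}$ commutes with convolution by $u_{0}$, since it is a Fourier multiplier. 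So the expression inside the norm equals $(\p_{x}^{l}\Phi_{m}(t))*u_{0}$.

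Next apply Young's inequality $\|f*g\|_{L^{\infty}}\le \|f\|_{L^{\infty}}\|g\|_{L^{1}}$ with $f=\p_{x}^{l}\Phi_{m}(t)$ and $g=u_{0}\in L^{1}(\R^{2})$. Proposition~\ref{prop.L-ap-S} gives $\|\p_{x}^{l}\Phi_{m}(t)\|_{L^{\infty}}\le Ct^{-\frac{7}{4}-\frac{l+m}{2}}$, and combining the two bounds yields the claimed estimate with the same constant $C$.

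The only step that needs any care is that the convolution is pointwise well defined, so that the formal identity is legitimate. The bound \eqref{est-R} obtained in the proof of Proposition~\ref{prop.L-ap-S} is a pointwise bound, uniform in $(x, y)$, on an absolutely convergent $\xi$-integral; hence $\p_{x}^{l}\Phi_{m}(t)$ is a bounded continuous function. Its convolution with an $L^{1}$ function is therefore defined for every $(x, y)$, and the same holds for each of the terms $S(t)*u_{0}$ and $\p_{x}^{3n}K(t)*u_{0}$ separately by Propositions~\ref{prop.L-decay-S} and \ref{prop.L-decay-K}. Consequently the splitting of the convolution over the finite sum is justified, and there is no genuine obstacle.
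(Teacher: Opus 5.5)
Your proposal is correct and is exactly the argument the paper intends: the corollary is stated without a separate proof as the immediate consequence of Proposition~\ref{prop.L-ap-S} via the Young inequality $\|f*g\|_{L^{\infty}}\le\|f\|_{L^{\infty}}\|g\|_{L^{1}}$, just as Corollaries~\ref{cor.L-decay-LKPB} and \ref{cor.L-decay-Lw} follow from their kernel bounds. Your remark that $\p_{x}^{l}\Phi_{m}(t)$ is bounded and continuous, since the $\xi$-integral converges absolutely, correctly supplies the well-definedness justification that the paper leaves implicit.
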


The above corollary means that the higher-order asymptotic profiles of $S(x, y, t)$ can be given by the derivatives of the solution $\psi(x, y, t)$ to \eqref{w-linear}. 
Next in this subsection, we shall give the more detailed asymptotic formula for the solution to \eqref{w-linear}. 
We analyzed the $x$-integration in the convolution in \eqref{w-sol} by employing a method similar to that used in the asymptotic expansion for solutions to parabolic equations, and derived the following formula \eqref{L-ap-LKPB}. This can be regarded as an anisotropic asymptotic expansion which generalizes Theorem~2.8 in \cite{FH23}. In order to give such a result, for $n \in \mathbb{N}\cup \{0\}$, let us introduce the following functions: 
\begin{align}
\begin{split}\label{DEF-mathK}
& \mathcal{K}_{n}(x, y, t) := \int_{\R} \p_{x}^{n}K(x, y-w, t)  M_{n}(w) dw, \ \ (x, y)\in \R^{2}, \ t>0,  \\
& M_{n}(y) := \frac{(-1)^{n}}{n!}\int_{\R} x^{n}u_{0}(x, y) dx, \ \ |\cdot|u_{0}(\cdot, y)\in L^{1}(\R), \ \mathrm{a.e.} \ y\in \R, 
\end{split}
\end{align}
where $K(x, y, t)$ is defined by \eqref{DEF-K}. Under this situation, an anisotropic asymptotic profile of the solution $\psi(x, y, t)$ to \eqref{w-linear} is given by the summation of $\mathcal{K}_{n}(x, y, t)$, if the initial data $u_{0}(x, y)$ satisfies some weight conditions. Actually, we can get the following result:  
\begin{prop}\label{prop.L-ap-LKPB}
Let $l$ and $m$ be non-negative integers. Suppose that the initial data $u_{0}(x, y)$ satisfies $x^{n}u_{0}\in L^{1}(\R^{2})$ for all $n=0, 1, \cdots, m$. 
Then, the following asymptotic formula holds: 
\begin{equation}\label{L-ap-LKPB}
\lim_{t \to \infty} t^{ \frac{5}{4}+\frac{l+m}{2} } \left\| \p_{x}^{l}\left(K(t)*u_{0}- \sum_{n=0}^{m}\mathcal{K}_{n}(t)\right) \right\|_{L^{\infty}} = 0. 
\end{equation}
Moreover, if we additionally assume $x^{m+1}u_{0}\in L^{1}(\R^{2})$, then we obtain  
\begin{equation}\label{L-ap-LKPB-2}
\left\| \p_{x}^{l}\left(K(t)*u_{0}- \sum_{n=0}^{m}\mathcal{K}_{n}(t)\right) \right\|_{L^{\infty}}\le C\left\|x^{m+1}u_{0}\right\|_{L^{1}}t^{ -\frac{5}{4}-\frac{l+m+1}{2} } , \ \ t>0, 
\end{equation}
where $K(x, y, t)$ and $\mathcal{K}_{n}(x, y, t)$ are defined by \eqref{DEF-K} and \eqref{DEF-mathK}, respectively. 
\end{prop}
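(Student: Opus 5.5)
We Taylor-expand the kernel in the $x$-variable only, exactly as one does for the heat equation, and leave the $y$-convolution untouched. For fixed $y'$ we write
\[
K(x-x', y-y', t)=\sum_{n=0}^{m}\frac{(-x')^{n}}{n!}\p_{x}^{n}K(x, y-y', t)+\frac{(-x')^{m}}{(m-1)!}\int_{0}^{1}(1-\theta)^{m-1}\left(\p_{x}^{m}K(x-\theta x', y-y', t)-\p_{x}^{m}K(x, y-y', t)\right)d\theta
\]
for $m\ge1$. The case $m=0$ is the same identity with no Taylor polynomial beyond $K(x,y-y',t)$ and remainder $K(x-x',\cdot)-K(x,\cdot)$. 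For the second estimate \eqref{L-ap-LKPB-2} we use instead the remainder of order $m+1$:
\[
\frac{(-x')^{m+1}}{m!}\int_{0}^{1}(1-\theta)^{m}\p_{x}^{m+1}K(x-\theta x', y-y', t)\,d\theta .
\]
Multiplying by $u_{0}(x', y')$, integrating in $(x', y')$, and using Fubini (justified by $x^{n}u_{0}\in L^{1}(\R^{2})$ together with the boundedness of $\p_x^nK(t)$ from Proposition~\ref{prop.L-decay-K}), the $n$-th term becomes $\int_{\R}\p_{x}^{n}K(x, y-y', t)M_{n}(y')dy'=\mathcal{K}_{n}(x, y, t)$ by \eqref{DEF-mathK}. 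Applying $\p_{x}^{l}$ simply adds $l$ derivatives to $K$ in every term.

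For \eqref{L-ap-LKPB-2} we bound the order-$(m+1)$ remainder pointwise. Using $\|\p_{x}^{l+m+1}K(t)\|_{L^\infty}=Ct^{-\frac54-\frac{l+m+1}{2}}$ from Proposition~\ref{prop.L-decay-K}, it is at most $\frac{1}{(m+1)!}\|\p_{x}^{l+m+1}K_*\|_{L^\infty}t^{-\frac54-\frac{l+m+1}{2}}\|x^{m+1}u_{0}\|_{L^{1}}$. This step is straightforward.

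For \eqref{L-ap-LKPB} we must show that the order-$m$ remainder, after multiplication by $t^{\frac54+\frac{l+m}{2}}$, tends to zero. We use the self-similar form: by \eqref{DEF-K},
\[
t^{\frac54+\frac{l+m}{2}}\p_{x}^{l+m}K(x-\theta x', y', t)=(\p_{x}^{l+m}K_{*})\bigl((x-\theta x')t^{-1/2}, y't^{-3/4}\bigr).
\]
The rescaled remainder is therefore bounded by
\[
\frac{1}{(m-1)!}\int_{\R^{2}}|x'|^{m}|u_{0}(x', y')|\sup_{\theta\in[0,1],\,X,\,Y}\left|\p_{x}^{l+m}K_{*}(X-\theta x't^{-1/2}, Y)-\p_{x}^{l+m}K_{*}(X, Y)\right|dx'dy'.
\]
The key ingredient is that $\p_{x}^{j}K_{*}$ is \emph{uniformly continuous in its first variable, uniformly in the second}. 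This follows from the explicit formula \eqref{DEF-K}: differentiating in $x$ brings down powers of $\sqrt{r/\nu}$, and differentiating once more gives
\[
|\p_{x}^{j+1}K_{*}|\le C\int_{0}^{\infty}r^{-\frac14+\frac{j+1}{2}}e^{-r}dr<\infty
\]
uniformly in $(x, y)$. Hence $\p_{x}^{j}K_{*}$ is Lipschitz in $x$, uniformly in $y$. The integrand in the bound above is dominated by $2\|\p_{x}^{l+m}K_{*}\|_{L^\infty}|x'|^{m}|u_{0}|\in L^{1}$ and tends to $0$ pointwise as $t\to\infty$. Dominated convergence then gives \eqref{L-ap-LKPB}.

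The only delicate point is this uniform-in-$y$ regularity of $K_*$: the $y$-dependence enters only through the oscillating phase $\frac{y^{2}}{4\e}\sqrt{r/\nu}$, which the absolute-value bound ignores, so the estimate indeed holds uniformly in $y$. No moments in $y$ are needed anywhere, which matches the hypotheses of the proposition.
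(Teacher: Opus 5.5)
Your proof is correct and follows the paper's route. You use the same $x$-only Taylor identity \eqref{K-henkei}, with the order-$(m+1)$ remainder for \eqref{L-ap-LKPB-2} and the order-$m$ remainder in difference form for \eqref{L-ap-LKPB}, together with the kernel bounds of Proposition~\ref{prop.L-decay-K}, and you leave the $y$-convolution untouched.

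The only difference is how the limit is taken. You obtain the $o\bigl(t^{-\frac{5}{4}-\frac{l+m}{2}}\bigr)$ statement by dominated convergence, using $\|\p_{x}^{l+m+1}K_{*}\|_{L^{\infty}}<\infty$. The paper does the same thing by hand: it fixes $\varepsilon_{0}$, cuts at $|(z,w)|=L$ with $\int_{|(z,w)|\ge L}|z^{m}u_{0}|<\varepsilon_{0}$, and uses the order-$(m+1)$ remainder inside and the order-$m$ remainder outside. You also write out the proof of \eqref{L-ap-LKPB-2}, which the paper omits.
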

\begin{proof}
We first consider the case of $m\ge1$. Recalling Taylor's theorem again, we can see that 
\begin{align*}
f(1)&=\sum_{n=0}^{m}\frac{1}{n!}f^{(n)}(0)+\frac{1}{m!}\int_{0}^{1}(1-\theta)^{m}f^{(m+1)}(\theta)d\theta \\
&=\sum_{n=0}^{m-1}\frac{1}{n!}f^{(n)}(0)+\frac{1}{(m-1)!}\int_{0}^{1}(1-\theta)^{m-1}f^{(m)}(\theta)d\theta.  
\end{align*}
Then, substituting $f(\theta)=K(x-\theta z, y-w, t)$ into the above equation, we obtain 
\begin{align}
&K(x-z, y-w, t)-\sum_{n=0}^{m}\frac{(-z)^{n}}{n!}\p_{x}^{n}K(x, y-w, t) \nonumber \\
&=\frac{(-z)^{m+1}}{m!}\int_{0}^{1}(1-\theta)^{m}\p_{x}^{m+1}K(x-\theta z, y-w, t)d\theta \nonumber \\
&=\frac{(-z)^{m}}{(m-1)!}\int_{0}^{1}(1-\theta)^{m-1}\p_{x}^{m}K(x-\theta z, y-w, t)d\theta-\frac{(-z)^{m}}{m!}\p_{x}^{m}K(x, y-w, t). \label{K-henkei}
\end{align}

Next, under the weight assumptions on the initial data $u_{0}(x, y)$, for any $\varepsilon_{0}>0$, there exists a constant $L=L(\varepsilon_{0})>0$ such that 
\begin{equation}\label{data-w}
\int_{|(z, w)|\ge L}|z^{m}u_{0}(z, w)|dzdw <\e_{0}.
\end{equation}

Now, based on the above result \eqref{K-henkei} and \eqref{DEF-mathK}, splitting the integral, we obtain the following equation for the target solution $\psi(x, y, t)$ and profile functions: 
\begin{align}
&\left(K(t)*u_{0}\right)(x, y)- \sum_{n=0}^{m}\mathcal{K}_{n}(x, y, t) \nonumber \\
&=\int_{\R^{2}}K(x-z, y-w, t)u_{0}(z, w) dzdw 
-\sum_{n=0}^{m}\frac{(-1)^{n}}{n!}\int_{\R}\p_{x}^{n}K(x, y-w, t)\left(\int_{\R}z^{n}u_{0}(z, w)dz\right)dw \nonumber \\
&=\int_{\R^{2}}\left\{K(x-z, y-w, t)
-\sum_{n=0}^{m}\frac{(-z)^{n}}{n!}\p_{x}^{n}K(x, y-w, t)\right\}u_{0}(z, w) dzdw \nonumber \\
&=\frac{(-1)^{m+1}}{m!}\int_{\R^{2}}\left(\int_{0}^{1}(1-\theta)^{m}\p_{x}^{m+1}K(x-\theta z, y-w, t)d\theta\right) z^{m+1}u_{0}(z, w)dzdw  \nonumber \\
&=\frac{(-1)^{m+1}}{m!}\int_{|(z, w)|\le L}\left(\int_{0}^{1}(1-\theta)^{m}\p_{x}^{m+1}K(x-\theta z, y-w, t)d\theta\right) z^{m+1}u_{0}(z, w)dzdw  \nonumber \\
&\ \ \ \ +\frac{(-1)^{m}}{(m-1)!}\int_{|(z, w)|\ge L}\left(\int_{0}^{1}(1-\theta)^{m-1}\p_{x}^{m}K(x-\theta z, y-w, t)d\theta\right) z^{m}u_{0}(z, w)dzdw \nonumber \\
&\ \ \ \ +\frac{(-1)^{m+1}}{m!}\int_{|(z, w)|\ge L}\p_{x}^{m}K(x, y-w, t)z^{m}u_{0}(z, w)dzdw.  \label{tuika}
\end{align}
Therefore, applying Proposition~\ref{prop.L-decay-K} and \eqref{data-w} to the above result, we can immediately have 
\begin{align*}
&\left\|\p_{x}^{l}\left(K(t)*u_{0} - \sum_{n=0}^{m}\mathcal{K}_{n}(t)\right)\right\|_{L^{\infty}} \\
&\le \int_{|(z, w)|\le L}\left(\int_{0}^{1}\left\|(1-\theta)^{m}\p_{x}^{l+m+1}K(\cdot -\theta z, y-w, t)\right\|_{L^{\infty}}d\theta\right) |z^{m+1}u_{0}(z, w)|dzdw  \\
&\ \ \ \ +\int_{|(z, w)|\ge L}\left(\int_{0}^{1}\left\|(1-\theta)^{m-1}\p_{x}^{l+m}K(\cdot-\theta z, y-w, t)\right\|_{L^{\infty}}d\theta\right) |z^{m}u_{0}(z, w)|dzdw \\
&\ \ \ \ +\int_{|(z, w)|\ge L}\left\|\p_{x}^{l+m}K(\cdot, y-w, t)\right\|_{L^{\infty}}|z^{m}u_{0}(z, w)|dzdw \\
&\le CL^{m+1}\left\|u_{0}\right\|_{L^{1}}t^{-\frac{5}{4}-\frac{l+m+1}{2}}+C\e_{0}t^{-\frac{5}{4}-\frac{l+m}{2}}, \ \ t>0. 
\end{align*}
As a result, we eventually arrive at 
\[
\limsup_{t\to \infty} t^{\frac{5}{4}+\frac{l+m}{2}}\left\|\p_{x}^{l}\left(K(t)*u_{0} - \sum_{n=0}^{m}\mathcal{K}_{n}(t)\right)\right\|_{L^{\infty}} \le C\e_{0}. 
\]
Thus, we get the first desired formula \eqref{L-ap-LKPB} for $m\ge1$, because $\varepsilon_{0}>0$ can be chosen arbitrarily small. 
On the other hand, if $m=0$, let us use the following relation instead of \eqref{tuika}: 
\begin{align*}
\left(K(t)*u_{0}\right)(x, y)-\mathcal{K}_{0}(x, y, t) 
&=-\int_{|(z, w)|\le L}\left(\int_{0}^{1}\p_{x}K(x-\theta z, y-w, t)d\theta \right)z u_{0}(z, w)dzdw \\
&\ \ \ \,+\int_{|(z, w)|\ge L}\left\{K(x-z, y-w, t)
-K(x, y-w, t)\right\}u_{0}(z, w) dzdw. 
\end{align*}
Then, we can see the similar result analogously as the above discussion. Actually, we have 
\[
\left\|\p_{x}^{l}\left(K(t)*u_{0} - \mathcal{K}_{0}(t)\right)\right\|_{L^{\infty}}
\le CL\left\|u_{0}\right\|_{L^{1}}t^{-\frac{5}{4}-\frac{l+1}{2}}+Ct^{-\frac{5}{4}-\frac{l}{2}}\int_{|(z, w)|\ge L}|u_{0}(z, w)|dzdw. 
\]
In addition, the proof of the second formula \eqref{L-ap-LKPB-2} can be given by a slight modification of the proof of the first one, and is therefore omitted. 
This completes the proof. 
\end{proof}

The following theorem can be obtained by combining Corollary~\ref{cor.L-decay-LKPB-ap} and Proposition~\ref{prop.L-ap-LKPB}, and is rewritten in the form required for the proof of the main theorem. Actually, the formula \eqref{L-ap-second1} plays an important role in the proof of the decay estimate of the solution, whereas the formula \eqref{L-ap-second2} is essential for its asymptotic expansion.
\begin{thm}\label{thm.L-ap-second}
Let $l$ be a non-negative integer. Suppose that the initial data $u_{0}(x, y)$ satisfies $(1+|x|)u_{0}\in L^{1}(\R^{2})$. 
Then, we have the following two asymptotic formulas: 
\begin{equation}\label{L-ap-second1}
\left\| \p_{x}^{l}\left(S(t)*u_{0}-\mathcal{K}_{0}(t)\right) \right\|_{L^{\infty}}\le C\left(\left\|u_{0}\right\|_{L^{1}}+\left\|xu_{0}\right\|_{L^{1}}\right)t^{ -\frac{7}{4}-\frac{l}{2} } , \ \ t>0, 
\end{equation}
\begin{equation}\label{L-ap-second2}
\lim_{t \to \infty} t^{ \frac{7}{4}+\frac{l}{2} } \left\| \p_{x}^{l}\left(S(t)*u_{0}- \mathcal{K}_{0}(t)-\mathcal{K}_{1}(t)+t\p_{x}^{3}\mathcal{K}_{0}(t)\right) \right\|_{L^{\infty}} = 0, 
\end{equation}
where $S(x, y, t)$ and $\mathcal{K}_{n}(x, y, t)$ for $n=0, 1$ are defined by \eqref{DEF-S} and \eqref{DEF-mathK}, respectively. 
\end{thm}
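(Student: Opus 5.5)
We prove both formulas by the triangle inequality, inserting the intermediate profile $K(t)*u_{0}$ (or its corrected version) and applying Corollary~\ref{cor.L-decay-LKPB-ap} and Proposition~\ref{prop.L-ap-LKPB} with suitable choices of $m$.

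\emph{Proof of \eqref{L-ap-second1}.} Split
\[
S(t)*u_{0}-\mathcal{K}_{0}(t)=\bigl(S(t)*u_{0}-K(t)*u_{0}\bigr)+\bigl(K(t)*u_{0}-\mathcal{K}_{0}(t)\bigr).
\]
\begin{itemize}
\item For the first term, Corollary~\ref{cor.L-decay-LKPB-ap} with $m=0$ gives the bound $C\|u_{0}\|_{L^{1}}t^{-7/4-l/2}$ after applying $\p_{x}^{l}$.
\item For the second term, \eqref{L-ap-LKPB-2} with $m=0$ (which requires only $xu_{0}\in L^{1}$) gives $C\|xu_{0}\|_{L^{1}}t^{-5/4-(l+1)/2}=C\|xu_{0}\|_{L^{1}}t^{-7/4-l/2}$.
\end{itemize}
Summing the two bounds yields \eqref{L-ap-second1} for all $t>0$.

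\emph{Proof of \eqref{L-ap-second2}.} Use the three-term decomposition
\begin{align*}
&S(t)*u_{0}-\mathcal{K}_{0}(t)-\mathcal{K}_{1}(t)+t\p_{x}^{3}\mathcal{K}_{0}(t)\\
&=\bigl(S(t)*u_{0}-K(t)*u_{0}+t\p_{x}^{3}K(t)*u_{0}\bigr)
+\bigl(K(t)*u_{0}-\mathcal{K}_{0}(t)-\mathcal{K}_{1}(t)\bigr)
-t\p_{x}^{3}\bigl(K(t)*u_{0}-\mathcal{K}_{0}(t)\bigr).
\end{align*}
\begin{itemize}
\item \emph{First bracket.} This is the $m=1$ case of Corollary~\ref{cor.L-decay-LKPB-ap}, since $\frac{(-t)^{1}}{1!}\p_{x}^{3}K=-t\p_{x}^{3}K$. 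It gives $O(t^{-7/4-(l+1)/2})=o(t^{-7/4-l/2})$.
\item \emph{Second bracket.} This is exactly \eqref{L-ap-LKPB} with $m=1$, valid because $x^{0}u_{0},x^{1}u_{0}\in L^{1}$. Its derivative $\p_{x}^{l}$ is $o(t^{-5/4-(l+1)/2})=o(t^{-7/4-l/2})$.
\item \emph{Third term.} The identity $\p_{x}^{3}\mathcal{K}_{0}=\int\p_{x}^{3}K(\cdot,y-w,t)M_{0}(w)\,dw$ shows that $\p_{x}^{3}$ commutes with the construction of $\mathcal{K}_{0}$. Hence \eqref{L-ap-LKPB-2} with $m=0$ and $l$ replaced by $l+3$ gives
\[
t\,\|\p_{x}^{l+3}(K*u_{0}-\mathcal{K}_{0})\|_{L^{\infty}}\le Ct\cdot t^{-5/4-(l+4)/2}=Ct^{-9/4-l/2},
\]
which is $o(t^{-7/4-l/2})$.
\end{itemize}
Multiplying by $t^{7/4+l/2}$ and letting $t\to\infty$ gives \eqref{L-ap-second2}.

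The argument is essentially bookkeeping. The only point needing care is that the correction $-t\p_{x}^{3}\mathcal{K}_{0}$ replaces $-t\p_{x}^{3}K*u_{0}$ from Proposition~\ref{prop.L-ap-S}. The resulting extra error costs a factor $t$ but gains $t^{-3/2}$ from three $x$-derivatives plus a further $t^{-1/2}$ from the moment $xu_{0}$, so it is harmless. This requires only the $m=0$ quantitative bound \eqref{L-ap-LKPB-2}, and hence no weight beyond $xu_{0}\in L^{1}$.
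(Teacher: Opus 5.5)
Your proposal is correct and matches the paper's proof essentially step for step. For \eqref{L-ap-second1} you use the same triangle-inequality split through $K(t)*u_{0}$, and for \eqref{L-ap-second2} the same three-term decomposition, estimated by Corollary~\ref{cor.L-decay-LKPB-ap} with $m=1$, by \eqref{L-ap-LKPB-2} with $m=0$ applied to $\p_{x}^{l+3}$, and by \eqref{L-ap-LKPB} with $m=1$; your exponent bookkeeping (e.g.\ $t\cdot t^{-5/4-(l+4)/2}=t^{-9/4-l/2}$) checks out.
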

\begin{proof}
Applying Corollary~\ref{cor.L-decay-LKPB-ap} and \eqref{L-ap-LKPB-2} for $m=0$, we immediately get \eqref{L-ap-second1} as follows: 
\begin{align*}
\left\| \p_{x}^{l}\left(S(t)*u_{0}-\mathcal{K}_{0}(t)\right) \right\|_{L^{\infty}}
&\le \left\|\p_{x}^{l}\left(S(t)*u_{0}-K(t)*u_{0}\right) \right\|_{L^{\infty}}+\left\| \p_{x}^{l}\left(K(t)*u_{0}-\mathcal{K}_{0}(t)\right) \right\|_{L^{\infty}} \\
&\le C\left\|u_{0}\right\|_{L^{1}}t^{ -\frac{7}{4}-\frac{l}{2} }+C\left\|xu_{0}\right\|_{L^{1}}t^{ -\frac{7}{4}-\frac{l}{2} }, \ \ t>0. 
\end{align*}

In order to prove the second asymptotic formula, we note that the following equation holds: 
\begin{align*}
&\left(S(t)*u_{0}\right)(x, y)-\mathcal{K}_{0}(x, y, t)-\mathcal{K}_{1}(x, y, t)+t\p_{x}^{3}\mathcal{K}_{0}(x, y, t) \\
&=\left(S(t)*u_{0}\right)(x, y)-\left(K(t)*u_{0}\right)(x, y)+t\p_{x}^{3}\left(K(t)*u_{0}\right)(x, y) \\
&\ \ \ \ -t\p_{x}^{3}\left\{\left(K(t)*u_{0}\right)(x, y)-\mathcal{K}_{0}(x, y, t)\right\}
+\left(K(t)*u_{0}\right)(x, y)-\mathcal{K}_{0}(x, y, t)-\mathcal{K}_{1}(x, y, t). 
\end{align*}
In the right hand side of the above, applying Corollary~\ref{cor.L-decay-LKPB-ap} for $m=1$ to the first term, we have 
\[
\left\|\p_{x}^{l}\left(S(t)*u_{0}-K(t)*u_{0}+t\p_{x}^{3}K(t)*u_{0}\right)\right\|_{L^{\infty}} \le C\left\|u_{0}\right\|_{L^{1}}t^{-\frac{9}{4}-\frac{l}{2}}, \ \ t>0. 
\]
On the other hand, using \eqref{L-ap-LKPB-2} in the case of $m=0$, we obtain 
\[
\left\|t\p_{x}^{3+l}\left(K(t)*u_{0}-\mathcal{K}_{0}(t)\right)\right\|_{L^{\infty}} \le C\left\|xu_{0}\right\|_{L^{1}}t^{-\frac{9}{4}-\frac{l}{2}}, \ \ t>0. 
\]
Finally, it follows from \eqref{L-ap-LKPB} with $m=1$ that 
\[
\lim_{t\to \infty}t^{\frac{7}{4}+\frac{l}{2}}\left\|\p_{x}^{l}\left(K(t)*u_{0}-\mathcal{K}_{0}(t)-\mathcal{K}_{1}(t)\right)\right\|_{L^{\infty}}=0. 
\]
Thus, \eqref{L-ap-second2} follows from all the above three expressions and taking the limit as $t\to \infty$.
\end{proof}

\subsection{Nonlinear Analysis}  

\indent

In this subsection, we would like to prepare some estimates for the solution to the original problem \eqref{KPB} and give an auxiliary lemma related to the nonlinear analysis. First, let us introduce a priori estimate of the solution $u(x, y, t)$ given by Molinet~\cite{M99}. The following proposition can be easily shown by the standard energy method (for the proof, see Theorem~3.1 in \cite{M99}).
\begin{prop}\label{prop.apriori}
Let $p\ge1$ be an integer. 
Assume that $u_{0}\in X^{3}(\R^{2})$ and $B(u_{0})$ is sufficiently small. 
Then, the solution $u(x, y, t)$ to \eqref{KPB} satisfies the following estimate: 
\begin{align}\label{apriori}
B(u(t))+\int_{0}^{t}\left(\left\|u_{y}(\tau)\right\|_{L^{2}}^{2}+\left\|u_{x}(\tau)\right\|_{H^{1}}^{2}+\left\|u_{xxx}(\tau)\right\|_{L^{2}}^{2}\right)d\tau \le CB(u_{0}), \ \ t\ge0, 
\end{align}
where $B(u(t))$ is defined by $B(u(t)):=\left\|u(t)\right\|_{H^{1}}^{2}+\left\|\p_{x}^{2}u(t)\right\|_{L^{2}}^{2}+\left\|\p_{x}^{-1}\p_{y}u(t)\right\|_{L^{2}}^{2}$. 
\end{prop}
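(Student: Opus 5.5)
This estimate is obtained by the standard energy method, following Theorem~3.1 in \cite{M99}. We derive a priori bounds for each of the three pieces of $B(u(t))$ and close them by a continuity (bootstrap) argument that uses the smallness of $B(u_{0})$. All computations are first done for smooth solutions and then passed to the limit by density; the regularity $u_{0}\in X^{3}$ makes every quantity below finite.

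\textbf{Energy identities.} We multiply \eqref{KPB} in turn by $u$, by $-u_{xx}$, and by $\p_{x}^{-1}u_{yy}$, or equivalently take the $L^{2}$ product of $\p_{x}^{-1}\p_{y}$ applied to the equation with $\p_{x}^{-1}\p_{y}u$. For the $x$-derivatives we multiply by $\p_{x}^{4}u$, and for $\p_{y}$ we multiply by $-u_{yy}$. The dispersive terms $u_{xxx}$ and $\e\p_{x}^{-1}u_{yy}$ are skew-adjoint, so they drop out of every identity. The dissipation $-\nu u_{xx}$ produces the good terms
\[
\nu\|u_{x}\|_{L^{2}}^{2},\quad \nu\|u_{xx}\|_{L^{2}}^{2},\quad \nu\|u_{xxx}\|_{L^{2}}^{2},\quad \nu\|u_{xy}\|_{L^{2}}^{2},\quad \nu\|u_{y}\|_{L^{2}}^{2},
\]
the last one coming from $\nu\|\p_{x}\p_{x}^{-1}\p_{y}u\|_{L^{2}}^{2}$. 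This last term is exactly why $\|\p_{x}^{-1}\p_{y}u\|_{L^{2}}$ is built into $B$: it yields the time integral of $\|u_{y}\|_{L^{2}}^{2}$ in \eqref{apriori}.

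\textbf{Nonlinear terms.} The $L^{2}$ identity has no nonlinear contribution, since $\int u^{p+1}u_{x}=0$. In the other identities we integrate by parts so that at least one $x$-derivative falls on a factor controlled by the dissipation. For example, $\int \p_{x}^{-1}\p_{y}(u^{p}u_{x})\,\p_{x}^{-1}\p_{y}u = \frac{1}{p+1}\int \p_{y}(u^{p+1})\,\p_{x}^{-1}\p_{y}u$, which is bounded by $\|u\|_{L^{\infty}}^{p}\|u_{y}\|_{L^{2}}\|\p_{x}^{-1}\p_{y}u\|_{L^{2}}$. We then need $\|u\|_{L^{\infty}}$ in terms of $B$. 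This uses the anisotropic Sobolev inequality
\[
\|u\|_{L^{\infty}}\le C\|u\|_{L^{2}}^{1/4}\|u_{x}\|_{L^{2}}^{1/2}\|\p_{x}^{-1}\p_{y}u\|_{L^{2}}^{1/4}
\]
(or a cruder variant with $\|u\|_{H^{1}}$ and $\|u_{xx}\|_{L^{2}}$), which gives $\|u\|_{L^{\infty}}\le CB(u)^{1/2}$. After this every nonlinear term is bounded by $C\,(B(u)^{p/2}+B(u)^{\text{higher}})\,D(u)$, where $D(u)$ is the dissipated quantity $\|u_{y}\|_{L^{2}}^{2}+\|u_{x}\|_{H^{1}}^{2}+\|u_{xxx}\|_{L^{2}}^{2}$. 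For the $u_{xxx}$ term we additionally use Gagliardo--Nirenberg type interpolation in $x$.

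\textbf{Closing the estimate.} Summing the identities gives
\[
\frac{d}{dt}B(u)+c\,D(u)\le C\bigl(B(u)^{p/2}+B(u)^{(p+1)/2}\bigr)D(u).
\]
If $B(u_{0})$ is small, a continuity argument keeps $B(u(t))\le 2CB(u_{0})$ for all $t\ge0$. The right-hand side is then absorbed into $cD$, and integrating in time gives \eqref{apriori}. The main obstacle is the nonlinear term in the $u_{y}$ and $\p_{x}^{-1}\p_{y}$ estimates. There no $x$-derivative is directly available, so the anisotropic embedding above is essential to avoid using norms outside $B$. Here we follow \cite{M99}.
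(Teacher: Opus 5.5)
The paper itself only cites Theorem~3.1 of \cite{M99}. Your outline has the right architecture for that energy method. The $L^2$, $-u_{xx}$, $\p_x^4u$ and $-u_{yy}$ identities, and the bounds of their nonlinear terms by $CB(u)^{p/2}D(u)$, are fine in outline. The gap is exactly at the step you flag, namely the identity
\[
\frac{1}{2}\frac{d}{dt}\|\p_x^{-1}\p_yu\|_{L^2}^2+\nu\|u_y\|_{L^2}^2=-N,\qquad N:=\int_{\R^2}u^{p}u_{y}\,\p_x^{-1}u_{y}\,dxdy.
\]
Your bound is $|N|\le\|u\|_{L^\infty}^p\|u_y\|_{L^2}\|\p_x^{-1}\p_yu\|_{L^2}\le CB(u)^{(p+1)/2}D(u)^{1/2}$. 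It contains only one dissipated factor, because $\|\p_x^{-1}\p_yu\|_{L^2}$ is not part of $D$, and bounding $\|u\|_{L^\infty}$ by $B(u)$ cannot supply a second one. So $N$ is not absorbed by $\nu\|u_y\|_{L^2}^2$, and your inequality $\frac{d}{dt}B+cD\le C(B^{p/2}+\cdots)D$ does not follow.

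Moreover, the embedding you call essential, $\|u\|_{L^\infty}\le C\|u\|_{L^2}^{1/4}\|u_x\|_{L^2}^{1/2}\|\p_x^{-1}\p_yu\|_{L^2}^{1/4}$, is false. Under $u\mapsto u(\lambda x,\mu y)$ its right-hand side scales like $\lambda^{-1/4}\mu^{-1/4}$, while the left-hand side is invariant. Only your cruder variant $\|f\|_{L^\infty}\le C(\|f\|_{L^2}+\|f_y\|_{L^2}+\|f_{xx}\|_{L^2})$ is valid.

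The missing idea is to use $\p_x(\p_x^{-1}u_y)=u_y$ through mixed norms. Argue as for $|f(s)|^2\le\|f\|_{L^2(\R)}\|f'\|_{L^2(\R)}$, once in $y$ and once in $x$. This gives $\sup_y\|u(\cdot,y)\|_{L^2_x}^2\le\|u\|_{L^2}\|u_y\|_{L^2}$ and $\|\p_x^{-1}u_y(\cdot,y)\|_{L^\infty_x}^2\le\|\p_x^{-1}u_y(\cdot,y)\|_{L^2_x}\|u_y(\cdot,y)\|_{L^2_x}$. H\"older in $y$ then yields
\[
|N|\le\|u\|_{L^\infty}^{p-1}\|u\|_{L^2}^{1/2}\|\p_x^{-1}\p_yu\|_{L^2}^{1/2}\|u_y\|_{L^2}^{2}\le CB(u)^{p/2}\|u_y\|_{L^2}^{2}.
\]
This is absorbed once $B$ is small, and then your bootstrap closes.

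Alternatively, use the Hamiltonian structure behind \eqref{E}. Testing the equation against $u^{p+1}$ gives $N=\e\bigl(\frac{1}{(p+1)(p+2)}\frac{d}{dt}\int u^{p+2}-\int u^pu_xu_{xx}+\nu\int u^pu_x^2\bigr)$. Hence $\frac12\|\p_x^{-1}\p_yu\|_{L^2}^2+\frac{\e}{(p+1)(p+2)}\int u^{p+2}$ satisfies an identity whose right-hand side is $O(\|u\|_{L^\infty}^pD)$. The extra term obeys $|\int u^{p+2}|\le\|u\|_{L^\infty}^p\|u\|_{L^2}^2$, which is controlled by the $L^2$ identity.

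There is also a minor slip. Pairing $\p_x^{-1}\p_y$ of the equation with $\p_x^{-1}\p_yu$ corresponds to the multiplier $\p_x^{-2}u_{yy}$. The multiplier $\p_x^{-1}u_{yy}$ that you wrote is skew-adjoint and gives no energy identity.
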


\begin{rem}
{\rm 
By virtue of \eqref{apriori} and the inequality 
\begin{equation*}
\left\|f\right\|_{L^{\infty}}\le C\left( \left\|f\right\|_{L^{2}}+\left\|f_{y}\right\|_{L^{2}}+\left\|f_{xx}\right\|_{L^{2}} \right)
\end{equation*}
(cf.~\cite{BIN78}), we have the uniform boundedness for the $L^{\infty}$-norm of the solution to \eqref{KPB} as follows: 
\begin{equation}\label{Linf-bdd}
\left\|u(t)\right\|_{L^{\infty}}\le C\sqrt{B(u_{0})}, \ \ t\ge0. 
\end{equation}
This fact will be often used in the latter part of the paper. 
}
\end{rem}

The next results introduced below are an additional $L^{2}$-estimate and an asymptotic decay of the solution to \eqref{KPB}. 
These results have already been established in Molinet~\cite{M99} (see its Corollary~3.2 and Lemmas~5.1 and 5.2). 
For the convenience of the reader, we extract the part relevant to the present study and provide their proofs here. In what follows, we assume that $B(u_{0})\le 1$. 
\begin{prop}\label{prop.L2-decay}
Let $p\ge1$ be an integer. 
Assume that $u_{0}\in X^{3}(\R^{2})$ and $B(u_{0})$ is sufficiently small. 
Then, the solution $u(x, y, t)$ to \eqref{KPB} satisfies the following estimate: 
\begin{equation}\label{u-L2L2-est}
\left\|\p_{x}^{-1}u(t)\right\|_{L^{2}}^{2}+\nu  \int_{0}^{t}\left\|u(\tau)\right\|_{L^{2}}^{2}d\tau
\le C\left\|\p_{x}^{-1}u_{0}\right\|_{L^{2}}^{2}, \ \ t\ge0. 
\end{equation}
Moreover, the following asymptotic formula also holds: 
\begin{equation}\label{u-sol-decay-L2}
\lim_{t\to \infty}t^{\frac{1}{2}}\left(\left\|u(t)\right\|_{L^{2}}^{2}+\left\|u_{y}(t)\right\|_{L^{2}}^{2}\right)^{\frac{1}{2}}=0.  
\end{equation}
\end{prop}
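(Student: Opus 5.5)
For \eqref{u-L2L2-est} I will run an energy estimate on $w:=\p_{x}^{-1}u$. Applying $\p_{x}^{-1}$ to \eqref{KPB} gives
\[
w_{t}+\frac{1}{p+1}u^{p+1}+w_{xxx}+\e\p_{x}^{-1}w_{yy}-\nu w_{xx}=0 .
\]
This is justified because $u(t)\in X^{3}(\R^{2})$, so $w(t)\in H^{3}$, and $\p_{x}^{-1}w_{yy}=\p_{x}^{-1}\p_{y}(\p_{x}^{-1}u_{y})$ makes sense via Fourier multipliers. Multiply by $w$ and integrate.

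The linear skew terms vanish. The term $\int w_{xxx}w=0$, and $\e\int(\p_{x}^{-1}w_{yy})w=0$ since the multiplier $-\e\eta^{2}/(i\xi)$ is purely imaginary and odd. The dissipation gives $\nu\|w_{x}\|_{L^{2}}^{2}=\nu\|u\|_{L^{2}}^{2}$. Hence
\[
\frac{1}{2}\frac{d}{dt}\|w\|_{L^{2}}^{2}+\nu\|u\|_{L^{2}}^{2}=-\frac{1}{p+1}\int_{\R^{2}}u^{p+1}w\,dxdy .
\]

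The main point is to control the nonlinear term without using decay. I bound it by $\|u\|_{L^{\infty}}^{p-1}\|u\|_{L^{2}}\|uw\|_{L^{2}}$, and then $\|uw\|_{L^{2}}\le\|u\|_{L^{2}}\|w\|_{L^{\infty}}$. The factor $\|w\|_{L^{\infty}}$ must be controlled by quantities in $B(u)$, and I expect this to be the main obstacle.

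For it I use the anisotropic embedding quoted in the Remark, $\|f\|_{L^{\infty}}\le C(\|f\|_{L^{2}}+\|f_{y}\|_{L^{2}}+\|f_{xx}\|_{L^{2}})$, applied to $f=w$. This gives $w_{y}=\p_{x}^{-1}u_{y}$ and $w_{xx}=u_{x}$, both bounded by $B(u)$, but it also brings in $\|w\|_{L^{2}}$ itself. So I get
\[
\Big|\int u^{p+1}w\Big|\le C\sqrt{B(u_{0})}^{\,p-1}\|u\|_{L^{2}}^{2}\big(\|w\|_{L^{2}}+\sqrt{B(u_{0})}\big).
\]

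To handle the $\|w\|_{L^{2}}$ piece, I instead interpolate $\|w\|_{L^{\infty}}$ anisotropically. In one dimension in $x$, $\|w(\cdot,y)\|_{L^{\infty}_{x}}\le\|w\|^{1/2}_{L^{2}_{x}}\|u\|^{1/2}_{L^{2}_{x}}$, and similarly in $y$ using $w_{y}$. This gives $\|w\|_{L^{\infty}}\le C\|w\|_{L^{2}}^{1/4}\|u\|^{1/4}\|w_{y}\|^{1/4}\|u_{y}\|^{1/4}$ type bounds. Then Young's inequality, absorption into $\nu\|u\|_{L^{2}}^{2}$ for $B(u_{0})$ small, and Gronwall/bootstrap on $\sup_{\tau}\|w(\tau)\|_{L^{2}}$ yield \eqref{u-L2L2-est}. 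For $p=1$ there is no $\|u\|_{L^{\infty}}$ factor; the smallness then comes from $\|w\|_{L^{\infty}}\lesssim\sqrt{B}$-type pieces, and the bootstrap must be arranged so that only $\|u\|_{L^{2}}^{2}$, which is integrable in time, multiplies the unknown.

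For \eqref{u-sol-decay-L2} I will use the integrability of $\|u\|_{L^{2}}^{2}+\|u_{y}\|_{L^{2}}^{2}$ on $(0,\infty)$, which follows from \eqref{u-L2L2-est} and \eqref{apriori}. I then need a differential inequality showing that $g(t):=\|u\|_{L^{2}}^{2}+\|u_{y}\|_{L^{2}}^{2}$ is almost nonincreasing.

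The standard $L^{2}$ and $\p_{y}$-energy identities give $g'(t)\le C\|u\|_{L^{\infty}}^{p-1}\|u_{x}\|_{L^{\infty}}\,g(t)$ for the nonlinear contributions. Dissipation absorbs the rest, since $\int u^{p}u_{x}u=0$ and $\int\p_{y}(u^{p}u_{x})u_{y}=\int(\frac{p}{1}u^{p-1}u_{y}u_{x}u_{y}-\frac{1}{2}\p_{x}(u^{p})u_{y}^{2})$. The quantity $\|u_{x}\|_{L^{\infty}}$ is integrable in time, or at least square integrable by \eqref{apriori} via the anisotropic embedding with $u_{xxx}$, $u_{xy}$. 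Hence $(t g(t))'\le g+Ch(t)\,t g(t)$ with $h\in L^{1}$ or handled by Cauchy--Schwarz. This gives $tg(t)$ bounded.

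For the little-$o$ statement I use the standard trick. Since $\int_{t/2}^{t}g\,d\tau\to0$ and $g$ is almost monotone, $\frac{t}{2}g(t)\le C\int_{t/2}^{t}g(\tau)d\tau\to0$.
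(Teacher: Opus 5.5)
\paragraph{The $L^{2}$ estimate.} Your interpolation
$\|w\|_{L^{\infty}}\le\|w\|_{L^{2}}^{1/4}\|u\|_{L^{2}}^{1/4}\|\p_{x}^{-1}u_{y}\|_{L^{2}}^{1/4}\|u_{y}\|_{L^{2}}^{1/4}$
is correct, but the closing you describe for \eqref{u-L2L2-est} does not work.

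\begin{itemize}
\item Absorbing $C\|u\|_{L^{2}}^{2}\|w\|_{L^{2}}^{1/4}$ into $\nu\|u\|_{L^{2}}^{2}$ by a bootstrap on $\sup_{\tau}\|w(\tau)\|_{L^{2}}$ forces $B(u_{0})$ to be small relative to $\|\p_{x}^{-1}u_{0}\|_{L^{2}}$. But $B(u_{0})$ does not control that quantity.
\item Using $\|u\|_{L^{2}}^{2}$ as the time-integrable Gronwall coefficient is circular, since its integrability is exactly what \eqref{u-L2L2-est} asserts.
\end{itemize}

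The coefficient of $\|w\|_{L^{2}}^{2}$ must instead be integrable by \eqref{apriori}. Your interpolation achieves this if you apply Young's inequality with exponents $8/7$ and $8$ (here $C$ absorbs $\|u\|_{L^{\infty}}^{p-1}$ via \eqref{Linf-bdd}):
\[
C\|u\|_{L^{2}}^{9/4}\|w\|_{L^{2}}^{1/4}\|\p_{x}^{-1}u_{y}\|_{L^{2}}^{1/4}\|u_{y}\|_{L^{2}}^{1/4}\le\frac{\nu}{2}\|u\|_{L^{2}}^{2}+C'\|u\|_{L^{2}}^{4}\|\p_{x}^{-1}u_{y}\|_{L^{2}}^{2}\|u_{y}\|_{L^{2}}^{2}\|w\|_{L^{2}}^{2}.
\]
The coefficient here is at most $C\|u_{y}\|_{L^{2}}^{2}\in L^{1}(0,\infty)$. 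Gronwall then closes with a constant independent of $\|\p_{x}^{-1}u_{0}\|_{L^{2}}$.

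The paper reaches the same structure more directly. It keeps $w$ in $L^{2}$ via Cauchy--Schwarz, uses $\|u\|_{L^{4}}^{2}\le C\|u\|_{L^{2}}\|u_{x}\|_{L^{2}}^{1/2}\|u_{y}\|_{L^{2}}^{1/2}$, and applies Young. This gives the integrable coefficient $\|u_{x}\|_{L^{2}}^{2}+\|u_{y}\|_{L^{2}}^{2}$.

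\paragraph{The decay formula.} The argument for \eqref{u-sol-decay-L2} has a genuine gap when $p=1$, which the statement covers.

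\begin{itemize}
\item After discarding the dissipation you have $g'\le Ch\,g$ with $h=\|u\|_{L^{\infty}}^{p-1}\|u_{x}\|_{L^{\infty}}$.
\item Both ``$tg$ bounded'' and the almost-monotonicity $g(t)\le Cg(s)$ for $t/2\le s\le t$ require $\sup_{t}\int_{t/2}^{t}h\,d\tau<\infty$.
\item From \eqref{apriori} you only get $\|u_{x}\|_{L^{\infty}}\le C(\|u_{x}\|_{L^{2}}+\|u_{xy}\|_{L^{2}}+\|u_{xxx}\|_{L^{2}})\in L^{2}(0,\infty)$.
\item So for $p=1$ you only know $\int_{t/2}^{t}h\,d\tau=o(t^{1/2})$, and the argument gives nothing.
\end{itemize}

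For $p\ge2$ your Cauchy--Schwarz remark does work. There $\|u\|_{L^{\infty}}\le C(\|u\|_{L^{2}}+\|u_{y}\|_{L^{2}}+\|u_{xx}\|_{L^{2}})$ is also in $L^{2}(0,\infty)$ by \eqref{u-L2L2-est} and \eqref{apriori}, so $h\in L^{1}$.

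The missing idea is to keep the dissipation and use the anisotropic inequality involving $\p_{x}^{-1}$. Apply $\|f\|_{L^{4}}^{4}\le C\|f\|_{L^{2}}\|f_{x}\|_{L^{2}}^{2}\|\p_{x}^{-1}f_{y}\|_{L^{2}}$ to $f=u_{y}$, with \eqref{Linf-bdd} for $\|u\|_{L^{\infty}}^{p-1}$ and \eqref{apriori-Xs} for $\|\p_{x}^{-1}u_{yy}\|_{L^{2}}$. This gives
\[
\Big|\int_{\R^{2}}u^{p-1}u_{x}u_{y}^{2}\,dxdy\Big|\le C\|u_{y}\|_{L^{2}}^{1/2}\|\p_{x}^{-1}u_{yy}\|_{L^{2}}^{1/2}\|u_{x}\|_{L^{2}}\|u_{xy}\|_{L^{2}}\le C\|u_{y}\|_{L^{2}}^{1/2}\big(\|u_{x}\|_{L^{2}}^{2}+\|u_{xy}\|_{L^{2}}^{2}\big).
\]
The argument then runs as follows.
\begin{itemize}
\item This bound gives $g'\in L^{1}(0,\infty)$. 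Together with $g\in L^{1}$, it forces $g\to0$.
\item The small factor $\|u_{y}\|_{L^{2}}^{1/2}$ then lets $2\nu(\|u_{x}\|_{L^{2}}^{2}+\|u_{xy}\|_{L^{2}}^{2})$ absorb the nonlinear term for $t\ge T$.
\item Hence $g$ is genuinely nonincreasing on $[T,\infty)$.
\item Finally $(\tau-t)g(\tau)\le\int_{t}^{\tau}g(s)\,ds$ yields $tg(t)\to0$.
\end{itemize}
This is the paper's route; it uses $\|u\|_{L^{2}}^{1/4}$ as the small factor instead.
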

\begin{proof}
First, we shall prove \eqref{u-L2L2-est}. Applying the operator $\p_{x}^{-1}$ to the original equation \eqref{KPB} and multiplying $\p_{x}^{-1}u$ on \eqref{KPB}, and then integrating the resulting equation over $\R^{2}$, we have 
\begin{align*}
\frac{d}{dt}\left\|\p_{x}^{-1}u(t)\right\|_{L^{2}}+2\nu  \left\|u(t)\right\|_{L^{2}} 
&\le \frac{2}{p+1}\int_{\R^{2}}\left|u^{p+1}\p_{x}^{-1}u\right|dxdy
\le \left\|u(t)\right\|_{L^{\infty}}^{p-1}\left\|u(t)\right\|_{L^{4}}^{2}\left\|\p_{x}^{-1}u(t)\right\|_{L^{2}} \\
&\le CB(u_{0})^{\frac{p-1}{2}}\left\|u(t)\right\|_{L^{2}}\left\|u_{x}(t)\right\|_{L^{2}}^{\frac{1}{2}}\left\|u_{y}(t)\right\|_{L^{2}}^{\frac{1}{2}}\left\|\p_{x}^{-1}u(t)\right\|_{L^{2}}
\\
&\le \nu \left\|u(t)\right\|_{L^{2}}^{2}+C\left(\left\|u_{x}(t)\right\|_{L^{2}}^{2}+\left\|u_{y}(t)\right\|_{L^{2}}^{2}\right)\left\|\p_{x}^{-1}u(t)\right\|_{L^{2}}^{2}, \ \ t>0, 
\end{align*}
where we used the Cauchy--Schwarz inequality, \eqref{Linf-bdd} and the following inequality: 
\[
\left\|f\right\|_{L^{2q}}^{2q}
\le (q!)^{2}\left\|f\right\|_{L^{2}}^{2} \left\|f_{x}\right\|_{L^{2}}^{q-1} \left\|f_{y}\right\|_{L^{2}}^{q-1}, \ \ q\in \mathbb{N}
\]
(cf.~\cite{FH24}). Therefore, integrating the above inequality on $[0, t]$, we obtain 
\begin{align*}
&\left\|\p_{x}^{-1}u(t)\right\|_{L^{2}}^{2}+\nu  \int_{0}^{t}\left\|u(\tau)\right\|_{L^{2}}^{2}d\tau  \\
&\le \left\|\p_{x}^{-1}u_{0}\right\|_{L^{2}}^{2}
+C\int_{0}^{t}\left(\left\|u_{x}(\tau)\right\|_{L^{2}}^{2}+\left\|u_{y}(\tau)\right\|_{L^{2}}^{2}\right)\left\|\p_{x}^{-1}u(\tau)\right\|_{L^{2}}^{2}d\tau, \ \ t\ge0. 
\end{align*}
Thus, we can prove \eqref{u-L2L2-est} from Gronwall's lemma and a priori estimate \eqref{apriori} as follows: 
\begin{align*}
\left\|\p_{x}^{-1}u(t)\right\|_{L^{2}}^{2}+\nu  \int_{0}^{t}\left\|u(\tau)\right\|_{L^{2}}^{2}d\tau
&\le \left\|\p_{x}^{-1}u_{0}\right\|_{L^{2}}^{2}\exp\left\{C\int_{0}^{t}\left(\left\|u_{x}(\tau)\right\|_{L^{2}}^{2}+\left\|u_{y}(\tau)\right\|_{L^{2}}^{2}\right)d\tau\right\} \\
&\le \left\|\p_{x}^{-1}u_{0}\right\|_{L^{2}}^{2}\exp\left\{CB(u_{0})\right\} 
\le C\left\|\p_{x}^{-1}u_{0}\right\|_{L^{2}}^{2}, \ \ t\ge0. 
\end{align*}

Next, we would like to prove \eqref{u-sol-decay-L2}. Multiplying the solution $u$ on the both sides of \eqref{KPB} and integrating over $\R^{2}$, we immediately obtain 
\begin{equation}\label{multi-u}
\frac{d}{dt}\left\|u(t)\right\|_{L^{2}}^{2}+2\nu \left\|u_{x}(t)\right\|_{L^{2}}^{2}=0, \ \ t>0. 
\end{equation}
Moreover, multiplying $-u_{yy}$ on the both sides of \eqref{KPB} and integrating over $\R^{2}$, then it follows from the integration by parts twice that 
\begin{equation}\label{multi-uyy}
\frac{d}{dt}\left\|u_{y}(t)\right\|_{L^{2}}^{2}+2\nu \left\|u_{xy}(t)\right\|_{L^{2}}^{2}
=2\int_{\R^{2}}u^{p}u_{x}u_{yy}dxdy=2\int_{\R^{2}}u^{p}u_{y}u_{xy}dxdy, \ \ t>0. 
\end{equation}
Thus, combining \eqref{multi-u} and \eqref{multi-uyy}, and using the Cauchy--Schwarz inequality, the inequality 
\[
\left\|f\right\|_{L^{2(q+1)}}^{2(q+1)}\le C \left\|f\right\|_{L^{2}}^{2-q}\left\|f_{x}\right\|_{L^{2}}^{2q}\left\|\p_{x}^{-1}f_{y}\right\|_{L^{2}}^{q}, \ \ q\in [0, 2)
\]
(cf.~\cite{BIN78}), the $L^{\infty}$-boundedness \eqref{Linf-bdd} and a priori estimate \eqref{apriori-Xs}, we can get the following result: 
\begin{align}
&\frac{d}{dt}\left(\left\|u(t)\right\|_{L^{2}}^{2}+\left\|u_{y}(t)\right\|_{L^{2}}^{2}\right)+2\nu \left(\left\|u_{x}(t)\right\|_{L^{2}}^{2}+\left\|u_{xy}(t)\right\|_{L^{2}}^{2}\right) \nonumber \\
&\le 2\int_{\R^{2}}\left|u^{p}u_{y}u_{xy}\right|dxdy 
\le 2\left\|u(t)\right\|_{L^{\infty}}^{p-1}\left\|u(t)\right\|_{L^{4}} \left\|u_{y}(t)\right\|_{L^{4}} \left\|u_{xy}(t)\right\|_{L^{2}} \nonumber \\
&\le CB(u_{0})^{\frac{p-1}{2}}\left\|u(t)\right\|_{L^{2}}^{\frac{1}{4}}\left\|u_{x}(t)\right\|_{L^{2}}^{\frac{1}{2}}\left\|\p_{x}^{-1}u_{y}(t)\right\|_{L^{2}}^{\frac{1}{4}}
\left\|u_{y}(t)\right\|_{L^{2}}^{\frac{1}{4}}\left\|u_{xy}(t)\right\|_{L^{2}}^{\frac{1}{2}}\left\|\p_{x}^{-1}u_{yy}(t)\right\|_{L^{2}}^{\frac{1}{4}}\left\|u_{xy}(t)\right\|_{L^{2}} \nonumber \\
&\le C\left\|u(t)\right\|_{L^{2}}^{\frac{1}{4}}\left(\left\|u_{y}(t)\right\|_{L^{2}}^{\frac{1}{2}}\left\|\p_{x}^{-1}u_{y}(t)\right\|_{L^{2}}^{\frac{1}{2}}
\left\|\p_{x}^{-1}u_{yy}(t)\right\|_{L^{2}}^{\frac{1}{2}}\left\|u_{x}(t)\right\|_{L^{2}}\left\|u_{xy}(t)\right\|_{L^{2}}+\left\|u_{xy}(t)\right\|_{L^{2}}^{2}\right) \nonumber \\
&\le C\left\|u(t)\right\|_{L^{2}}^{\frac{1}{4}}\left\{C_{\dag}\left(B(u_{0}), \|u_{0}\|_{X^{s}}\right)\left(\left\|u_{x}(t)\right\|_{L^{2}}^{2}+\left\|u_{xy}(t)\right\|_{L^{2}}^{2}\right)+\left\|u_{xy}(t)\right\|_{L^{2}}^{2}\right\} \nonumber \\
&\le C\left\|u(t)\right\|_{L^{2}}^{\frac{1}{4}}\left(\left\|u_{x}(t)\right\|_{L^{2}}^{2}+\left\|u_{xy}(t)\right\|_{L^{2}}^{2}\right), \ \ t>0. \label{u+uy-est}
\end{align}

Now, we set $g(t):=\left\|u(t)\right\|_{L^{2}}^{2}+\left\|u_{y}(t)\right\|_{L^{2}}^{2}$. Then, we note that $g, g' \in L^{1}(0, \infty)$ and thus the fact $g(t) \to 0$ ($t\to \infty$) is true. Indeed, modifying the above argument and using \eqref{apriori}, we have 
\[
\left\|g'\right\|_{L^{1}(0, \infty)}=\int_{0}^{\infty}\left|\frac{d}{dt}\left(\left\|u(t)\right\|_{L^{2}}^{2}+\left\|u_{y}(t)\right\|_{L^{2}}^{2}\right)\right|dt
\le C\int_{0}^{\infty}\left(\left\|u_{x}(t)\right\|_{L^{2}}^{2}+\left\|u_{xy}(t)\right\|_{L^{2}}^{2}\right)dt<\infty. 
\]
On the other hand, $g \in L^{1}(0, \infty)$ is a direct consequence of the first result \eqref{u-L2L2-est} and a priori estimate \eqref{apriori}. 
Thus, we have $g(t) \to 0$ ($t\to \infty$), and therefore $\|u(t)\|_{L^{2}}^{1/4}\to 0$ ($t\to \infty$) holds. 
Then, by virtue of \eqref{u+uy-est}, there exists $T>0$ such that 
\[
\frac{d}{dt}\left(\left\|u(t)\right\|_{L^{2}}^{2}+\left\|u_{y}(t)\right\|_{L^{2}}^{2}\right) + \nu \left(\left\|u_{x}(t)\right\|_{L^{2}}^{2}+\left\|u_{xy}(t)\right\|_{L^{2}}^{2}\right)\le 0, \ \ t\ge T. 
\]
Hence, we can say that $g(t)$ is a monotone non-increasing function on $[T, \infty)$. Therefore, for any $t\in [T, \infty)$ and $\tau \in (t, \infty)$, we obtain the following fact: 
\[
\int_{t}^{\tau}g(s)ds\ge (\tau-t)g(\tau). 
\]
Thus, for fixed $t\in [T, \infty)$, it follows from the fact $g(\tau) \to 0$ ($\tau \to \infty$) that 
\[
\int_{t}^{\infty}g(s)ds\ge \limsup_{t\to \infty}\tau g(\tau). 
\]
Finally, since $g \in L^{1}(0, \infty)$, the left hand side of the above converges to zero as $t \to \infty$. 
As a result, we have the fact $\tau g(\tau) \to 0$ ($\tau \to \infty$). Namely, we are able to see that 
\begin{equation*}
\lim_{t \to \infty} t g(t)=\lim_{t \to \infty} t \left(\left\|u(t)\right\|_{L^{2}}^{2}+\left\|\p_{y}u(t)\right\|_{L^{2}}^{2} \right)=0. 
\end{equation*}
Taking the roots of both sides of this result, we can get the second desired formula \eqref{u-sol-decay-L2}. 
\end{proof}

Finally in this subsection, let us prepare an auxiliary estimate which is a revised version of Lemma~3.5 in \cite{FH23}. 
The result below is related to the analysis for the Duhamel term in the integral equation \eqref{integral-eq} and plays a crucial role throughout the proof of Theorem~\ref{thm.main}. Although this is a simple estimate, it should be noted that the derivative loss due to the nonlinear term that appeared in the result given in \cite{FH23} is slightly recovered by a small modification of the original proof.
\begin{lem}\label{lem.Duhamel-est-Linf}
Let $t>a>0$ and $g\in C((0, \infty); (H^{0, 1} \cap L^{1})(\R^{2}))$. Then, we have 
\begin{align}\label{Duhamel-est-Linf}
\begin{split}
&\left\|\int_{a}^{t}\p_{x}S(t-\tau)*g(\tau)d\tau\right\|_{L^{\infty}}+\left\|\int_{a}^{t}\p_{x}K(t-\tau)*g(\tau)d\tau\right\|_{L^{\infty}} \\
&\le \frac{1}{2^{\frac{5}{4}}\pi^{\frac{1}{4}}\nu^{\frac{3}{4}}}\int_{a}^{t}(t-\tau)^{-\frac{3}{4}}\left(\left\|g(\tau)\right\|_{L^{2}}^{2}+\left\|\p_{y}g(\tau)\right\|_{L^{2}}^{2}\right)^{\frac{1}{2}} d\tau,
\end{split}
\end{align}
where $S(x, y, t)$ and $K(x, y, t)$ are defined by \eqref{DEF-S} and \eqref{DEF-K}, respectively. 
\end{lem}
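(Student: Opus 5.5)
The plan is to estimate each integrand pointwise in $\tau$ and then integrate in $\tau$. Since $\|\int_a^t F(\tau)\,d\tau\|_{L^\infty}\le\int_a^t\|F(\tau)\|_{L^\infty}d\tau$, it suffices to prove that for every $s=t-\tau>0$,
\[
\left\|\p_{x}S(s)*g\right\|_{L^{\infty}}\le \frac{1}{2^{\frac{9}{4}}\pi^{\frac{1}{4}}\nu^{\frac{3}{4}}}\,s^{-\frac{3}{4}}\left(\left\|g\right\|_{L^{2}}^{2}+\left\|\p_{y}g\right\|_{L^{2}}^{2}\right)^{\frac{1}{2}},
\]
and the same bound for $\p_{x}K(s)*g$. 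Adding the two bounds produces the factor $2\cdot 2^{-9/4}=2^{-5/4}$ in \eqref{Duhamel-est-Linf}.

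First I pass to the Fourier side. With the normalization used in the paper, $\mathcal{F}[f*h]=2\pi\hat f\hat h$. By \eqref{DEF-S}, this gives
\[
\mathcal{F}[\p_x S(s)*g](\xi,\eta)=i\xi\,e^{-\nu s\xi^2+is(\xi^3-\e\eta^2/\xi)}\hat g(\xi,\eta).
\]
The analogous formula holds for $K$, with the phase $e^{-is\e\eta^2/\xi}$ and without the $\xi^3$ term. This follows from the computation in the proof of Proposition~\ref{prop.L-ap-S}, namely $\hat K=\frac{1}{2\pi}e^{-\nu s\xi^2-is\e\eta^2/\xi}$. In both cases the phases are unimodular, so the modulus of the transform is at most $|\xi|e^{-\nu s\xi^2}|\hat g(\xi,\eta)|$.

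Next I use the elementary bound $\|f\|_{L^\infty}\le\frac{1}{2\pi}\|\hat f\|_{L^1}$. I insert the weight $(1+\eta^2)^{\mp1/2}$ and apply Cauchy--Schwarz:
\[
\int_{\R^2}|\xi|e^{-\nu s\xi^2}|\hat g|\,d\xi d\eta\le\left(\int_{\R}\xi^2e^{-2\nu s\xi^2}d\xi\int_{\R}\frac{d\eta}{1+\eta^2}\right)^{\frac12}\left\|(1+\eta^2)^{\frac12}\hat g\right\|_{L^2}.
\]
By Plancherel, the last factor equals $(\|g\|_{L^2}^2+\|\p_yg\|_{L^2}^2)^{1/2}$. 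This step is where the hypothesis $g(\tau)\in H^{0,1}$ enters; the condition $g\in L^1$ only guarantees that the convolutions and transforms are well defined.

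The remaining work is the explicit constants. We have $\int_{\R}(1+\eta^2)^{-1}d\eta=\pi$ and
\[
\int_{\R}\xi^2e^{-2\nu s\xi^2}d\xi=\frac{\sqrt\pi}{2(2\nu s)^{3/2}}.
\]
Hence the square root equals $\pi^{3/4}2^{-5/4}\nu^{-3/4}s^{-3/4}$. Multiplying by $\frac{1}{2\pi}$ gives exactly $2^{-9/4}\pi^{-1/4}\nu^{-3/4}s^{-3/4}$, as required. Setting $s=t-\tau$ and integrating over $\tau\in[a,t]$ completes the argument; the kernel $(t-\tau)^{-3/4}$ is integrable at $\tau=t$.

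There is no real obstacle here. The only point needing care is keeping the $2\pi$ normalization factors consistent, so that the stated constant comes out exactly.
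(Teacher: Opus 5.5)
Your proposal is correct and follows the paper's proof essentially line by line. Both arguments pass to the Fourier side, bound the multiplier by $|\xi|e^{-\nu(t-\tau)\xi^{2}}$, and apply Cauchy--Schwarz with the weight $(1+\eta^{2})^{\pm 1/2}$ and Plancherel. Both then compute the constant $2^{-9/4}\pi^{-1/4}\nu^{-3/4}$ for each kernel and add the $S$ and $K$ estimates to obtain the factor $2^{-5/4}$.
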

\begin{proof}
It follows from the Fourier transform and the Cauchy--Schwarz inequality that 
\begin{align*}
&\left\|\int_{a}^{t}\p_{x}S(t-\tau)*g(\tau)d\tau\right\|_{L^{\infty}} \nonumber \\
&=\frac{1}{2\pi} \sup_{(x, y)\in \R^{2}}\left| \int_{a}^{t}\int_{\R^{2}} (i\xi)e^{-\nu (t-\tau)\xi^{2}+i(t-\tau)\left(\xi^{3} - \e \frac{\eta^{2}}{\xi} \right)+ix\xi+iy\eta}\hat{g}(\xi, \eta, \tau)d\xi d\eta d\tau\right| \nonumber \\
&\le \frac{1}{2\pi} \int_{a}^{t}\left(\int_{\R}\xi^{2}e^{-2\nu (t-\tau)\xi^{2}}d\xi\right)^{\frac{1}{2}}\left(\int_{\R}\frac{d\eta}{1+\eta^{2}}\right)^{\frac{1}{2}}
\left(\int_{\R^{2}}\left(1+\eta^{2}\right) \left|\hat{g}(\xi, \eta, \tau)\right|^{2}d\xi d\eta\right)^{\frac{1}{2}}d\tau \nonumber \\
&=\frac{1}{2^{\frac{9}{4}}\pi^{\frac{1}{4}}\nu^{\frac{3}{4}}} \int_{a}^{t}(t-\tau)^{-\frac{3}{4}}\left(\left\|g(\tau)\right\|_{L^{2}}^{2}+\left\|\p_{y}g(\tau)\right\|_{L^{2}}^{2}\right)^{\frac{1}{2}}d\tau.
\end{align*}
If we replace $S(x, y, t)$ with $K(x, y, t)$ in the above proof, we can also get exactly the same estimate. 
Combining these two estimates, we can complete the proof of the desired result \eqref{Duhamel-est-Linf}. 
\end{proof}

\section{Proof of the Main Result}  

\indent

In this section, we shall prove Theorem~\ref{thm.main}. 
The proof of this theorem is carried out by dividing it into several steps. For this purpose, this section is divided into the part on the decay estimate of the solution, i.e., \eqref{u-sol-decay} and the part on the asymptotic behavior of the solution, i.e., \eqref{u-sol-asymp}.

\subsection{Decay Estimate}  

\indent

In this subsection, let us derive the $L^{\infty}$-decay estimate of the solution $u(x, y, t)$ to \eqref{KPB}. Namely, we shall prove \eqref{u-sol-decay}. 
For proving that result, we need to introduce the following quantity: 
\begin{equation}\label{DEF-H}
H(t):=\sup_{0\le \tau \le t}(1+\tau)^{\frac{7}{4}}\left\|u(\tau)\right\|_{L^{\infty}}, \ \ t\ge0. 
\end{equation}
Then, in order to prove \eqref{u-sol-decay}, it suffices to show the following proposition: 
\begin{prop}\label{prop.main-decay}
Let $p\ge1$ be an integer. Assume that $u_{0}\in X^{3}(\R^{2})\cap L^{1}(\R^{2})$, $xu_{0}\in L^{1}(\R^{2})$ and $B(u_{0})$ is sufficiently small. 
Then, we have the following estimate: 
\begin{equation}\label{main-decay}
H(t)\le CE_{0}, \ \ t\ge0, 
\end{equation}
where $H(t)$ and $E_{0}$ are defined by \eqref{DEF-H} and \eqref{data-1}, respectively. 
\end{prop}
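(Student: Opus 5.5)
We argue directly from the integral equation \eqref{integral-eq} and estimate the linear and Duhamel parts separately.

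\emph{Small times.} For $0\le t\le 1$ the uniform bound \eqref{Linf-bdd} gives $(1+t)^{7/4}\|u(t)\|_{L^{\infty}}\le C\sqrt{B(u_{0})}\le CE_{0}$. It therefore suffices to treat $t\ge1$.

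\emph{Linear part.} For $t\ge1$ we decompose $S(t)*u_{0}=\mathcal{K}_{0}(t)+(S(t)*u_{0}-\mathcal{K}_{0}(t))$. The key point is that the zero-mass condition \eqref{zero-mass} gives $M_{0}(y)=\int_{\R}u_{0}(x,y)dx=0$ for a.e. $y$. Hence $\mathcal{K}_{0}\equiv0$ by \eqref{DEF-mathK}. Then \eqref{L-ap-second1} with $l=0$ yields
\[
\|S(t)*u_{0}\|_{L^{\infty}}\le C(\|u_{0}\|_{L^{1}}+\|xu_{0}\|_{L^{1}})t^{-\frac{7}{4}}\le CE_{0}t^{-\frac{7}{4}}.
\]
This is precisely where the weight $xu_{0}\in L^{1}$ replaces the assumption $\p_{x}^{-1}u_{0}\in L^{1}$ of \cite{M99}.

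\emph{Duhamel part.} We split $\int_{0}^{t}=\int_{0}^{t/2}+\int_{t/2}^{t}$.

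On $[0,t/2]$ the Duhamel kernel is itself a derivative of a zero-mass-type object, so we use the kernel bound for $\p_{x}S$. Proposition~\ref{prop.L-decay-S} with $l=1$ gives $\|\p_{x}S(t-\tau)\|_{L^{\infty}}\le C(t-\tau)^{-7/4}\le Ct^{-7/4}$. Therefore
\[
\Big\|\int_{0}^{t/2}\p_{x}S(t-\tau)*u^{p+1}(\tau)d\tau\Big\|_{L^{\infty}}\le Ct^{-\frac{7}{4}}\int_{0}^{\infty}\|u(\tau)\|_{L^{p+1}}^{p+1}d\tau.
\]
Since $p+1\ge2$, we have $\|u\|_{L^{p+1}}^{p+1}\le\|u\|_{L^{\infty}}^{p-1}\|u\|_{L^{2}}^{2}\le CB(u_{0})^{(p-1)/2}\|u\|_{L^{2}}^{2}$. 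Then \eqref{u-L2L2-est} bounds the time integral by $C\|\p_{x}^{-1}u_{0}\|_{L^{2}}^{2}\le CE_{0}$. This is why $\|\p_{x}^{-1}u_{0}\|_{L^{2}}^{2}$ appears in $E_{0}$.

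On $[t/2,t]$ we apply Lemma~\ref{lem.Duhamel-est-Linf} with $g=u^{p+1}$, which gives the bound
\[
C\int_{t/2}^{t}(t-\tau)^{-\frac{3}{4}}\big(\|u^{p+1}\|_{L^{2}}+\|u^{p}u_{y}\|_{L^{2}}\big)d\tau.
\]
Each factor satisfies
\[
\|u^{p+1}\|_{L^{2}}+\|u^{p}u_{y}\|_{L^{2}}\le\|u(\tau)\|_{L^{\infty}}^{p}\big(\|u\|_{L^{2}}+\|u_{y}\|_{L^{2}}\big)\le H(t)^{p}(1+\tau)^{-\frac{7p}{4}}\cdot o(\tau^{-\frac{1}{2}}),
\]
where the $o(\tau^{-1/2})$ factor comes from \eqref{u-sol-decay-L2}, or simply from the bound $\le C\sqrt{B(u_{0})}$. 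Using \eqref{u-sol-decay-L2}, the $\tau$-integral is bounded by $H(t)^{p}t^{-7p/4-1/2}t^{1/4}=H(t)^{p}t^{-7p/4-1/4}$. For $p\ge1$ this is $\le t^{-7/4}\cdot t^{-(7p-6)/4}H(t)^{p}$, which is better than needed.

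\emph{Closing the bound.} To close without a bootstrap constant issue, we use $\|u\|_{L^{\infty}}^{p}\le\|u\|_{L^{\infty}}^{p-1}\|u\|_{L^{\infty}}$, with the first factor at most $CB(u_{0})^{(p-1)/2}$, for $p>1$. For $p=1$ we have exactly one factor $H(t)$, multiplied by $\varepsilon(t):=\sup_{\tau\ge t/2}\tau^{1/2}(\|u\|_{L^{2}}+\|u_{y}\|_{L^{2}})$, which is $\to0$ and in any case $\le C$. This gives
\[
H(t)\le CE_{0}+\delta H(t)+C(T_{0})E_{0},
\]
where $\delta$ is made small either by taking $B(u_{0})$ small or by restricting to $t\ge T_{0}$ and using \eqref{u-sol-decay-L2}. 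The range $t\le T_{0}$ is handled by \eqref{Linf-bdd}. Absorbing $\delta H(t)$ into the left-hand side gives \eqref{main-decay}.

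\emph{Main obstacle.} The delicate point is the $p=1$ case. There the near-time Duhamel term is linear in $H$, and we must obtain smallness of the coefficient from the $o(t^{-1/2})$ $L^{2}$-decay \eqref{u-sol-decay-L2}. That decay is not quantitative in $E_{0}$, so the constant for $t\le T_{0}$ must be controlled via \eqref{Linf-bdd}, whose bound $\sqrt{B(u_{0})}$ is indeed dominated by $E_{0}$.
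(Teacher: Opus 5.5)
This is essentially the paper's proof. You use the same splitting of \eqref{integral-eq} into $S(t)*u_{0}$ and the Duhamel integrals over $[0,t/2]$ and $[t/2,t]$. The zero-mass condition kills $\mathcal{K}_{0}$ in \eqref{L-ap-second1}, and Corollary~\ref{cor.L-decay-LKPB} together with \eqref{u-L2L2-est} handles $[0,t/2]$. On $[t/2,t]$ you apply Lemma~\ref{lem.Duhamel-est-Linf}, bound $p-1$ factors of $\|u\|_{L^{\infty}}$ via \eqref{Linf-bdd} and one by $H(t)(1+\tau)^{-7/4}$, take the smallness from \eqref{u-sol-decay-L2} for large times, and treat the bounded time range with \eqref{Linf-bdd}.

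Drop the aside that the $\tau^{-1/2}$ factor could come ``simply from the bound $\le C\sqrt{B(u_{0})}$'', and the suggestion that $\delta$ can be made small through $B(u_{0})$ alone. Boundedness of $\|u\|_{L^{2}}+\|u_{y}\|_{L^{2}}$ gives only $O(t^{-3/2})$ for a term linear in $H$, and the rate in \eqref{u-sol-decay-L2} is not quantified by $B(u_{0})$. So for every $p$ the absorption must be done for $t\ge T_{0}$, exactly as the paper does with $T_{\nu}$.
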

\begin{proof}
First, splitting the $\tau$-integral of the integral equation \eqref{integral-eq}, we have 
\begin{align}
u(x, y, t)
&= S(t)*u_{0}-\frac{1}{p+1}\left(\int_{0}^{\frac{t}{2}}+\int_{\frac{t}{2}}^{t}\right)\p_{x}S(t-\tau)*u^{p+1}(\tau)d\tau  \nonumber \\
&=:I_{1}(x, y, t)+I_{2}(x, y, t)+I_{3}(x, y, t). \label{u-split}
\end{align}
In what follows, we shall evaluate $I_{1}(x, y, t)$, $I_{2}(x, y, t)$ and $I_{3}(x, y, t)$. We start with evaluating $I_{1}(x, y, t)$. 
Now, we note that $M_{0}(y)\equiv 0$ and thus $\mathcal{K}_{0}(x, y, t)\equiv 0$ holds (recall the definition \eqref{DEF-mathK} of these functions), under the zero-mass condition \eqref{zero-mass}. Therefore, it follows from \eqref{L-ap-second1} that 
\begin{equation}\label{I1-est}
 \left\|I_{1}(t)\right\|_{L^{\infty}}=\left\|S(t)*u_{0}\right\|_{L^{\infty}}\le C\left(\left\|u_{0}\right\|_{L^{1}}+\left\|xu_{0}\right\|_{L^{1}}\right)t^{-\frac{7}{4}}, \ \ t>0. 
\end{equation}

Next, we would like to deal with $I_{2}(x, y, t)$. Using Corollary~\ref{cor.L-decay-LKPB}, \eqref{Linf-bdd} and \eqref{u-L2L2-est}, we are able to evaluate $I_{2}(x, y, t)$ as follows: 
\begin{align}
\left\|I_{2}(t)\right\|_{L^{\infty}}&\le C\int_{0}^{\frac{t}{2}}(t-\tau)^{-\frac{7}{4}}\left\|u^{p+1}(\tau)\right\|_{L^{1}}d\tau  \nonumber \\
&\le C\left(\sup_{\tau\ge0}\left\|u(\tau)\right\|_{L^{\infty}}\right)^{p-1}\int_{0}^{\frac{t}{2}}(t-\tau)^{-\frac{7}{4}}\left\|u(\tau)\right\|_{L^{2}}^{2}d\tau \nonumber \\
&\le CB(u_{0})^{\frac{p-1}{2}}\left\|\p_{x}^{-1}u_{0}\right\|_{L^{2}}^{2}t^{-\frac{7}{4}}
\le C\left\|\p_{x}^{-1}u_{0}\right\|_{L^{2}}^{2}t^{-\frac{7}{4}}, \ \ t>0. \label{I2-est}
\end{align}

Finally, let us treat $I_{3}(x, y, t)$. Before evaluating it, we shall prepare an estimate for $L^{2}$-norm of the solution. 
Here, we write the positive constant $C$ appeared in \eqref{Linf-bdd} as $C_{\dag}$ for the latter sake. Moreover, in what follows, we assume that $B(u_{0})$ be sufficiently small such that $C_{\dag}\sqrt{B(u_{0})}\le1$. Now, from \eqref{u-sol-decay-L2}, there exists a sufficiently large $T_{\nu} \ge1$ which depending only $\nu>0$ such that 
\begin{equation}\label{L2-re}
\left(\left\|u(\tau)\right\|_{L^{2}}^{2}+\left\|u_{y}(\tau)\right\|_{L^{2}}^{2}\right)^{\frac{1}{2}} \le \frac{\pi^{\frac{1}{4}}\nu^{\frac{3}{4}}}{2^{\frac{15}{4}}}\tau^{-\frac{1}{2}}, \ \ \tau \ge T_{\nu}. 
\end{equation}
Then, applying Lemma~\ref{lem.Duhamel-est-Linf} to $I_{3}(x, y, t)$, we have from \eqref{DEF-H} and \eqref{L2-re} that 
\begin{align}
\left\|I_{3}(t)\right\|_{L^{\infty}}&\le \frac{1}{2^{\frac{5}{4}}\pi^{\frac{1}{4}}\nu^{\frac{3}{4}}(p+1)}\int_{\frac{t}{2}}^{t}(t-\tau)^{-\frac{3}{4}}\left(\left\|u^{p+1}(\tau)\right\|_{L^{2}}^{2}+\left\|\p_{y}\left(u^{p+1}(\tau)\right)\right\|_{L^{2}}^{2}\right)^{\frac{1}{2}}d\tau \nonumber \\
&\le \frac{1}{2^{\frac{5}{4}}\pi^{\frac{1}{4}}\nu^{\frac{3}{4}}}\int_{\frac{t}{2}}^{t}(t-\tau)^{-\frac{3}{4}}\left\|u(\tau)\right\|_{L^{\infty}}^{p-1}\left\|u(\tau)\right\|_{L^{\infty}}\left(\left\|u(\tau)\right\|_{L^{2}}^{2}+\left\|u_{y}(\tau)\right\|_{L^{2}}^{2}\right)^{\frac{1}{2}}d\tau \nonumber \\
&\le \frac{1}{2^{\frac{5}{4}}\pi^{\frac{1}{4}}\nu^{\frac{3}{4}}}\left(\sup_{\tau\ge0}\left\|u(\tau)\right\|_{L^{\infty}}\right)^{p-1}\int_{\frac{t}{2}}^{t}(t-\tau)^{-\frac{3}{4}}\cdot H(t)(1+\tau)^{-\frac{7}{4}}\cdot \frac{\pi^{\frac{1}{4}}\nu^{\frac{3}{4}}}{2^{\frac{15}{4}}}\tau^{-\frac{1}{2}}d\tau \nonumber \\
&\le \frac{1}{2^{5}}\left(C_{\dag}\sqrt{B(u_{0})}\right)^{\frac{p-1}{2}}H(t)\int_{\frac{t}{2}}^{t}(t-\tau)^{-\frac{3}{4}}\tau^{-\frac{9}{4}}d\tau 
\le \frac{1}{2^{5}}\left(\frac{t}{2}\right)^{-\frac{9}{4}}H(t)\int_{\frac{t}{2}}^{t}(t-\tau)^{-\frac{3}{4}}d\tau  \nonumber \\
&= \frac{1}{2^{5}}\left(\frac{t}{2}\right)^{-\frac{9}{4}}\cdot 4\left(\frac{t}{2}\right)^{\frac{1}{4}}\cdot H(t)
=\frac{1}{2}H(t)t^{-2}, \ \ t\ge 2T_{\nu}. \label{I3-est} 
\end{align}

Finally, for $0\le t \le 2T_{\nu}$, we note that the boundedness for the $L^{\infty}$-norm of the solution $u(x, y, t)$ can be guaranteed by \eqref{Linf-bdd}. 
Actually, we have 
\[
(1+t)^{\frac{7}{4}}\left\|u(t)\right\|_{L^{\infty}} \le C_{\dag}(1+2T_{\nu})^{\frac{7}{4}}\sqrt{B(u_{0})}. 
\]
Therefore, combining this fact with \eqref{u-split}, \eqref{I1-est}, \eqref{I2-est} and \eqref{I3-est}, there exists a constant $C_{0}>0$ independent of $t>0$ such that 
\[
H(t)\le C_{0}\left(\sqrt{B(u_{0})}+\left\|u_{0}\right\|_{L^{1}}+\left\|xu_{0}\right\|_{L^{1}}+\left\|\p_{x}^{-1}u_{0}\right\|_{L^{2}}^{2}\right)+\frac{1}{2}H(t), \ \ t\ge0. 
\]
Thus, we can eventually arrive at the following result: 
\[
H(t)\le 2C_{0}\left(\sqrt{B(u_{0})}+\left\|u_{0}\right\|_{L^{1}}+\left\|xu_{0}\right\|_{L^{1}}+\left\|\p_{x}^{-1}u_{0}\right\|_{L^{2}}^{2}\right)=2C_{0}E_{0}, \ \ t\ge0. 
\]
This means that the proof of the desired result \eqref{main-decay} has been completed. 
\end{proof}

\begin{rem}
{\rm 
The above proof also works under the assumption $\p_{x}^{-1}u_{0}\in L^{1}(\R^{2})$ like those of \cite{M99} and \cite{FH23}, instead of $xu_{0}\in L^{1}(\R^{2})$. More precisely, if we assume that $p\ge1$ be an integer, $u_{0}\in X^{3}(\R^{2})$, $\p_{x}^{-1}u_{0}\in L^{1}(\R^{2})$ and $B(u_{0})$ is sufficiently small, then we have the following uniform estimate: 
\begin{equation*}
H(t)\le C\left(\sqrt{B(u_{0})}+\left\|\p_{x}^{-1}u_{0}\right\|_{L^{1}}+\left\|\p_{x}^{-1}u_{0}\right\|_{L^{2}}^{2}\right), \ \ t\ge0.
\end{equation*}
Actually, under the condition $\p_{x}^{-1}u_{0}\in L^{1}(\R^{2})$, we can see that 
\begin{equation*}
\left\|S(t)*u_{0}\right\|_{L^{\infty}}\le C\left\|\p_{x}^{-1}u_{0}\right\|_{L^{1}}t^{-\frac{7}{4}}, \ \ t>0 
\end{equation*}
holds instead of \eqref{I1-est}. Combining this estimate and \eqref{u-split}, \eqref{I2-est} and \eqref{I3-est}, we can derive the desired result. 
As a conclusion, we arrive at 
\[
\left\|u(t)\right\|_{L^{\infty}}\le C\left(\sqrt{B(u_{0})}+\left\|\p_{x}^{-1}u_{0}\right\|_{L^{1}}+\left\|\p_{x}^{-1}u_{0}\right\|_{L^{2}}^{2}\right)(1+t)^{-\frac{7}{4}}, \ \ t\ge0. 
\] 
This means that we can generalize the known result given by Molinet (Theorem~5.2 in \cite{M99}). 
}
\end{rem}

\subsection{Asymptotic Behavior}  

\indent

Finally, we shall derive the asymptotic profile of the solution $u(x, y, t)$ to \eqref{KPB}, i.e., let us prove the formula \eqref{u-sol-asymp}. 
To do that, let us rewrite the integral equation \eqref{integral-eq} as follows: 
\begin{align}
u(t)&=S(t)*u_{0}-\frac{1}{p+1}\int_{0}^{t}\p_{x}S(t-\tau)*u^{p+1}(\tau)d\tau \nonumber \\
&=S(t)*u_{0}-\mathcal{K}_{1}(t)-\frac{1}{p+1}\int_{0}^{t}\p_{x}(S-K)(t-\tau)*u^{p+1}(\tau)d\tau \nonumber\\
&\ \ \ +\mathcal{K}_{1}(t)-\frac{1}{p+1}\int_{0}^{t}\p_{x}K(t-\tau)*u^{p+1}(\tau)d\tau. \label{IE-re}
\end{align}

\noindent
\underline{\bf Step 1. Linear Approximation}

\smallskip
In what follows, we would like to analyze all terms in the right hand side of \eqref{IE-re}. 
For the difference of the first and the second terms, i.e., $S(t)*u_{0}-\mathcal{K}_{1}(t)$, using \eqref{L-ap-second2} under the zero-mass condition \eqref{zero-mass}, in the same way to get \eqref{I1-est}, we can immediately have the following formula: 
\begin{cor}\label{cor.asymp-linear}
Suppose that the initial data $u_{0}(x, y)$ satisfies $(1+|x|)u_{0}\in L^{1}(\R^{2})$ and the zero-mass condition \eqref{zero-mass}. 
Then, we have the following asymptotic formula: 
\begin{equation*}
\lim_{t \to \infty} t^{ \frac{7}{4}} \left\| S(t)*u_{0}-\mathcal{K}_{1}(t)\right\|_{L^{\infty}} = 0. 
\end{equation*}
where $S(x, y, t)$ and $\mathcal{K}_{1}(x, y, t)$ are defined by \eqref{DEF-S} and \eqref{DEF-mathK}, respectively. 
\end{cor}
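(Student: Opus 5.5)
The plan is to apply \eqref{L-ap-second2} of Theorem~\ref{thm.L-ap-second} with $l=0$. The only extra input is that the zero-mass condition kills $\mathcal{K}_{0}$.

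First, I will check the assumptions. Since $(1+|x|)u_{0}\in L^{1}(\R^{2})$, Fubini's theorem gives $|\cdot|u_{0}(\cdot,y)\in L^{1}(\R)$ and $u_{0}(\cdot,y)\in L^{1}(\R)$ for a.e.\ $y\in\R$. Hence $M_{0}(y)$ and $M_{1}(y)$ in \eqref{DEF-mathK} are well defined for a.e.\ $y$. Both are integrable in $y$, with
\[
\|M_{0}\|_{L^{1}}\le\|u_{0}\|_{L^{1}},\qquad \|M_{1}\|_{L^{1}}\le\|xu_{0}\|_{L^{1}}.
\]
So $\mathcal{K}_{0}(t)$ and $\mathcal{K}_{1}(t)$ are bounded functions, because $\p_{x}^{n}K(t)\in L^{\infty}$ by Proposition~\ref{prop.L-decay-K}.

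Next, the zero-mass condition \eqref{zero-mass} gives
\[
M_{0}(y)=\int_{\R}u_{0}(x,y)\,dx=0\quad\text{for a.e. } y\in\R.
\]
Therefore $\mathcal{K}_{0}(x,y,t)=\int_{\R}K(x,y-w,t)M_{0}(w)\,dw\equiv0$ for all $(x,y)$ and $t>0$. Differentiating under the integral, which is justified since $\p_{x}^{3}K(t)$ is bounded and $M_{0}\in L^{1}$, also gives $\p_{x}^{3}\mathcal{K}_{0}(t)\equiv0$, so $t\p_{x}^{3}\mathcal{K}_{0}(t)\equiv0$.

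Finally, I substitute into \eqref{L-ap-second2} with $l=0$. The quantity inside the norm reduces to $S(t)*u_{0}-\mathcal{K}_{1}(t)$, and the prefactor is $t^{7/4}$. This is exactly the claimed limit, the same way \eqref{I1-est} was obtained from \eqref{L-ap-second1}.

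I expect no real obstacle. The only point needing care is the a.e.\ vanishing of $M_{0}$, which the introduction already derives for $u_{0}\in X^{s}(\R^{2})\cap L^{1}(\R^{2})$ with $s>2$. Here, though, the corollary takes it directly as a hypothesis.
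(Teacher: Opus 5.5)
Your proposal is correct and is the paper's argument: the corollary follows from \eqref{L-ap-second2} with $l=0$, because the zero-mass condition gives $M_{0}\equiv0$, hence $\mathcal{K}_{0}\equiv0$ and $t\p_{x}^{3}\mathcal{K}_{0}\equiv0$, exactly as in \eqref{I1-est}. You do not need to justify differentiating under the integral, since $\mathcal{K}_{0}$ vanishes identically and so do all its $x$-derivatives.
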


\noindent
\underline{\bf Step 2. Error Estimate for the Duhamel Term}

\smallskip
Moreover, we can easily prove the following formula for the third term in the right hand side of \eqref{IE-re}.  
This proposition means that $S(x, y, t)$ in the Duhamel term of \eqref{integral-eq} can be well approximated by $K(x, y, t)$, in view of the asymptotic behavior. 
\begin{prop}\label{prop.asymp-D-S-K}
Let $p\ge1$ be an integer. Assume that $u_{0}\in X^{3}(\R^{2})\cap L^{1}(\R^{2})$, $xu_{0}\in L^{1}(\R^{2})$ and $B(u_{0})$ is sufficiently small. 
Then, there exists a sufficiently large $T_{*}\ge1$ such that 
\begin{align}\label{asymp-D-S-K}
\left\| \int_{0}^{t}\p_{x}(S-K)(t-\tau)*u^{p+1}(\tau)d\tau \right\|_{L^{\infty}} 
\le CE_{0}^{p}t^{-\min\left\{\frac{9}{4}, \frac{7p+1}{4}\right\}}, \ \ t\ge T_{*}, 
\end{align}
where $S(x, y, t)$, $K(x, y, t)$ and $E_{0}$ are defined by \eqref{DEF-S}, \eqref{DEF-K} and \eqref{data-1}, respectively. 
\end{prop}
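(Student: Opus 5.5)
The plan is to split the Duhamel integral at $\tau=t/2$, as in the proof of Proposition~\ref{prop.main-decay}, and to treat the two pieces differently:
\[
\int_{0}^{t}\p_{x}(S-K)(t-\tau)*u^{p+1}(\tau)d\tau=\left(\int_{0}^{\frac{t}{2}}+\int_{\frac{t}{2}}^{t}\right)\p_{x}(S-K)(t-\tau)*u^{p+1}(\tau)d\tau=:J_{1}(t)+J_{2}(t).
\]
On the early-time piece $J_{1}$ we will exploit the extra decay of $S-K$ from Proposition~\ref{prop.L-ap-S}. On the late-time piece $J_{2}$ that gain is useless, since the kernel bound would become non-integrable at $\tau=t$, so there we estimate $S$ and $K$ separately through Lemma~\ref{lem.Duhamel-est-Linf}.

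For $J_{1}$, apply Proposition~\ref{prop.L-ap-S} with $m=0$, $l=1$, which gives $\|\p_{x}(S-K)(t-\tau)\|_{L^{\infty}}\le C(t-\tau)^{-9/4}\le Ct^{-9/4}$ for $\tau\le t/2$. Young's inequality then gives
\[
\|J_{1}(t)\|_{L^{\infty}}\le Ct^{-\frac{9}{4}}\int_{0}^{\frac{t}{2}}\|u(\tau)\|_{L^{\infty}}^{p-1}\|u(\tau)\|_{L^{2}}^{2}d\tau .
\]
For the factor $\|u(\tau)\|_{L^{\infty}}^{p-1}$ we use Proposition~\ref{prop.main-decay} (or \eqref{Linf-bdd}) to bound it by $CE_{0}^{p-1}$. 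For the time integral of $\|u\|_{L^{2}}^{2}$ we use \eqref{u-L2L2-est}, which bounds it by $C\|\p_{x}^{-1}u_{0}\|_{L^{2}}^{2}\le CE_{0}$. Hence $\|J_{1}(t)\|_{L^{\infty}}\le CE_{0}^{p}t^{-9/4}$.

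For $J_{2}$, apply Lemma~\ref{lem.Duhamel-est-Linf} with $g=u^{p+1}$ and $a=t/2$, once for $S$ and once for $K$. We have $\|u^{p+1}\|_{L^{2}}\le\|u\|_{L^{\infty}}^{p}\|u\|_{L^{2}}$ and $\p_{y}(u^{p+1})=(p+1)u^{p}u_{y}$. Combining these with \eqref{main-decay}, which gives $\|u(\tau)\|_{L^{\infty}}^{p}\le CE_{0}^{p}(1+\tau)^{-7p/4}$, we get
\[
\|J_{2}(t)\|_{L^{\infty}}\le CE_{0}^{p}\int_{\frac{t}{2}}^{t}(t-\tau)^{-\frac{3}{4}}\tau^{-\frac{7p}{4}}\left(\|u(\tau)\|_{L^{2}}^{2}+\|u_{y}(\tau)\|_{L^{2}}^{2}\right)^{\frac{1}{2}}d\tau .
\]
Next, \eqref{u-sol-decay-L2} supplies $T_{*}\ge1$ (as for $T_{\nu}$ in \eqref{L2-re}) such that the last factor is bounded by $\tau^{-1/2}$ for $\tau\ge T_{*}/2$; this is exactly why the statement is restricted to $t\ge T_{*}$. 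Then, since $\tau\ge t/2$ on the range of integration,
\[
\|J_{2}(t)\|_{L^{\infty}}\le CE_{0}^{p}\,t^{-\frac{7p}{4}-\frac{1}{2}}\int_{\frac{t}{2}}^{t}(t-\tau)^{-\frac{3}{4}}d\tau=CE_{0}^{p}t^{-\frac{7p+1}{4}} .
\]

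Adding the two bounds gives the decay rate $t^{-\min\{9/4,(7p+1)/4\}}$ claimed in \eqref{asymp-D-S-K}. The main point to watch is the late-time piece $J_{2}$. There the small-$\xi$ improvement of $S-K$ cannot be used, because $(t-\tau)^{-9/4}$ is not integrable near $\tau=t$. The decay must instead come entirely from the nonlinearity, through the $L^{\infty}$ rate \eqref{main-decay} together with the qualitative $L^{2}$-decay \eqref{u-sol-decay-L2}. This is why the rate degrades to $t^{-2}$ when $p=1$.
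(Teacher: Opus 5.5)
Your proposal is correct and follows the paper's proof essentially step for step. It uses the same split at $\tau=t/2$. For $J_{1}$ it uses the $(t-\tau)^{-9/4}$ bound from Proposition~\ref{prop.L-ap-S} (equivalently Corollary~\ref{cor.L-decay-LKPB-ap}) together with \eqref{u-L2L2-est}. For $J_{2}$ it uses Lemma~\ref{lem.Duhamel-est-Linf} with \eqref{u-sol-decay} and the eventual bound from \eqref{u-sol-decay-L2} (as in \eqref{L2-re}), which is what forces $t\ge T_{*}$. The only cosmetic difference is that you bound $\|u(\tau)\|_{L^{\infty}}^{p-1}$ directly by a constant, whereas the paper carries the factor $(1+\tau)^{-7(p-1)/4}$ and then discards it.
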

\begin{proof}
First, let us split the target integral as follows: 
\begin{align}
\int_{0}^{t}\p_{x}(S-K)(t-\tau)*u^{p+1}(\tau)d\tau
&=\left(\int_{0}^{\frac{t}{2}}+\int_{\frac{t}{2}}^{t}\right)\p_{x}(S-K)(t-\tau)*u^{p+1}(\tau)d\tau \nonumber \\
&=:J_{1}(x, y, t)+J_{2}(x, y, t). \label{S-K-split}
\end{align}
Then, analogously as \eqref{I2-est}, it follows from Corollary~\ref{cor.L-decay-LKPB-ap}, \eqref{u-sol-decay}, \eqref{u-L2L2-est} and \eqref{data-1} that 
\begin{align}
\left\|J_{1}(t)\right\|_{L^{\infty}} &\le C\int_{0}^{\frac{t}{2}}(t-\tau)^{-\frac{9}{4}}\left\|u^{p+1}(\tau)\right\|_{L^{1}}d\tau \nonumber \\
&\le C\int_{0}^{\frac{t}{2}}(t-\tau)^{-\frac{9}{4}}\left\|u(\tau)\right\|_{L^{\infty}}^{p-1}\left\|u(\tau)\right\|_{L^{2}}^{2}d\tau  \nonumber  \\
&\le CE_{0}^{p-1}\int_{0}^{\frac{t}{2}}(t-\tau)^{-\frac{9}{4}}(1+\tau)^{-\frac{7(p-1)}{4}}\left\|u(\tau)\right\|_{L^{2}}^{2}d\tau \nonumber \\
&\le CE_{0}^{p-1}\left\|\p_{x}^{-1}u_{0}\right\|_{L^{2}}^{2}t^{-\frac{9}{4}}
\le CE_{0}^{p}t^{-\frac{9}{4}}, \ \ t>0. \label{J1-est}
\end{align}

On the other hand, similarly as \eqref{I3-est}, by virtue of Lemma~\ref{lem.Duhamel-est-Linf}, \eqref{u-sol-decay} and \eqref{L2-re}, 
there exists a sufficiently large $T_{*}\ge T_{\nu} \ge1$ such that 
\begin{align}
\left\|J_{2}(t)\right\|_{L^{\infty}} 
&\le \left\| \int_{\frac{t}{2}}^{t}\p_{x}S(t-\tau)*u^{p+1}(\tau)d\tau \right\|_{L^{\infty}}+\left\|\int_{\frac{t}{2}}^{t}\p_{x}K(t-\tau)*u^{p+1}(\tau)d\tau \right\|_{L^{\infty}} \nonumber \\
&\le C\int_{\frac{t}{2}}^{t}(t-\tau)^{-\frac{3}{4}}\left(\left\|u^{p+1}(\tau)\right\|_{L^{2}}^{2}+\left\|\p_{y}\left(u^{p+1}(\tau)\right)\right\|_{L^{2}}^{2}\right)^{\frac{1}{2}}d\tau \nonumber \\
&\le C\int_{\frac{t}{2}}^{t}(t-\tau)^{-\frac{3}{4}}\left\|u(\tau)\right\|_{L^{\infty}}^{p}\left(\left\|u(\tau)\right\|_{L^{2}}^{2}+\left\|u_{y}(\tau)\right\|_{L^{2}}^{2}\right)^{\frac{1}{2}}d\tau \nonumber \\
&\le CE_{0}^{p}\int_{\frac{t}{2}}^{t}(t-\tau)^{-\frac{3}{4}}(1+\tau)^{-\frac{7p}{4}}\tau^{-\frac{1}{2}}d\tau 
\le CE_{0}^{p}t^{-\frac{7p+1}{4}}, \ \ t\ge T_{*}. \label{J2-est} 
\end{align}
Combining \eqref{S-K-split}, \eqref{J1-est} and \eqref{J2-est}, we arrive at the desired result \eqref{asymp-D-S-K}. 
\end{proof}

\noindent
\underline{\bf Step 3. Anisotropic Asymptotic Expansion on $xt$}

\smallskip
Next, we would like to perform the anisotropic asymptotic expansions in the $x$-direction and the $t$-direction, and then extract the leading term of  
\begin{equation}\label{DEF-v}
v(x, y, t):=\mathcal{K}_{1}(t)-\frac{1}{p+1}\int_{0}^{t}\p_{x}K(t-\tau)*u^{p+1}(\tau)d\tau. 
\end{equation}
To do that, let us introduce the following quantities: 
\begin{align}
&U(w, \tau):=-\frac{1}{p+1}\int_{\R}u^{p+1}(z, w, \tau)dz, \ \ w\in \R, \ \tau>0, \label{DEF-U} \\
&Q(w):=M_{1}(w)+\int_{0}^{\infty}U(w, \tau)d\tau, \ \ w\in \R, \label{DEF-Q}
\end{align}
where $M_{1}(w)$ is defined by \eqref{DEF-mathK}. Under the assumptions in Theorem~\ref{thm.main}, we can see that
\begin{equation}\label{UQ-L1}
U \in L^{1}(\R \times [0, \infty)), \ \ M_{1}\in L^{1}(\R), \ \ Q\in L^{1}(\R). 
\end{equation}
More precisely, it follows from \eqref{u-sol-decay}, \eqref{u-L2L2-est} and \eqref{data-1} that 
\begin{align}
\int_{0}^{\infty}\int_{\R}\left|U(w, \tau)\right|dwd\tau 
&\le \int_{0}^{\infty}\int_{\R^{2}}\left|u^{p+1}(z, w, \tau)\right|dzdwd\tau 
\le \int_{0}^{\infty}\left\|u(\tau)\right\|_{L^{\infty}}^{p-1}\left\|u(\tau)\right\|_{L^{2}}^{2}d\tau  \nonumber \\
&\le CE_{0}^{p-1}\int_{0}^{\infty}\left\|u(\tau)\right\|_{L^{2}}^{2}d\tau 
\le CE_{0}^{p-1}\left\|\p_{x}^{-1}u_{0}\right\|_{L^{2}}^{2}
\le CE_{0}^{p},  \label{U-est} \\
\int_{\R}\left|M_{1}(w)\right|dw &\le \int_{\R^{2}}\left|xu_{0}(x, w)\right|dxdw \le C\left\|xu_{0}\right\|_{L^{1}} \le CE_{0}. \nonumber 
\end{align}
Moreover, for the constant $\mathcal{N}_{0}$ defined by \eqref{DEF-mathN}, we note that 
\begin{equation}\label{N0-henkei}
\int_{\R}Q(w)dw=\int_{\R}M_{1}(w)dw+\int_{0}^{\infty}\int_{\R}U(w, \tau)dwd\tau=\mathcal{N}_{0}. 
\end{equation}
In addition, we shall define the new function: 
\begin{align}
V(x, y, t):=\int_{\R}\p_{x}K(x, y-w, t)Q(w)dw, \ \ (x, y)\in \R^{2}, \ t>0.  \label{DEF-V}
\end{align}
Then, the next step is to prove the following asymptotic formula: 
\begin{prop}\label{prop.aniso-x}
Let $p\ge1$ be an integer. Assume that $u_{0}\in X^{3}(\R^{2})\cap L^{1}(\R^{2})$, $xu_{0}\in L^{1}(\R^{2})$ and $B(u_{0})$ is sufficiently small. 
Then, there exists a remainder function $\Psi(x, y, t)$ satisfying 
\begin{equation}\label{aniso-x}
\left|v(x, y, t)-V(x, y, t)\right|\le CE_{0}^{p}y^{2}t^{-\frac{13}{4}}+\Psi(x, y, t), 
\end{equation}
for each $(x, y)\in \R^{2}$, $t>0$, and 
\begin{equation}\label{Psi-est}
\lim_{t\to \infty}t^{\frac{7}{4}}\left\|\Psi(t)\right\|_{L^{\infty}}=0, 
\end{equation}
where $v(x, y, t)$, $V(x, y, t)$ and $E_{0}$ are defined by \eqref{DEF-v}, \eqref{DEF-V} and \eqref{data-1}, respectively. 
\end{prop}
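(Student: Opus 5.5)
The plan is to write $v-V$ as a single Duhamel-type difference and compare it piece by piece with its frozen version. By \eqref{DEF-mathK}, \eqref{DEF-U} and \eqref{DEF-Q} we have $\mathcal{K}_{1}(x,y,t)=\int_{\R}\p_{x}K(x,y-w,t)M_{1}(w)dw$, so the $M_{1}$-contributions cancel exactly. We are left with
\begin{align*}
v(x,y,t)-V(x,y,t)
&=\int_{0}^{t}\int_{\R^{2}}\p_{x}K(x-z,y-w,t-\tau)\,\tfrac{-1}{p+1}u^{p+1}(z,w,\tau)\,dzdwd\tau \\
&\ \ \ -\int_{0}^{\infty}\int_{\R}\p_{x}K(x,y-w,t)\,U(w,\tau)\,dwd\tau .
\end{align*}
I would split this as $D_{1}+D_{2}+D_{3}+D_{4}$, defined as follows.
\begin{itemize}
\item[] $D_{1}$ is the part of the Duhamel integral over $\tau\in[t/2,t]$.
\item[] $D_{2}$ is $-\int_{t/2}^{\infty}\int_{\R}\p_{x}K(x,y-w,t)U(w,\tau)dwd\tau$.
\item[] $D_{3}$ is, for $\tau\in[0,t/2]$, the error from replacing $\p_{x}K(x-z,\cdot,t-\tau)$ by $\p_{x}K(x,\cdot,t-\tau)$.
\item[] $D_{4}$ is, for $\tau\in[0,t/2]$, the error from replacing $\p_{x}K(x,y-w,t-\tau)$ by $\p_{x}K(x,y-w,t)$.
\end{itemize}
All terms except part of $D_{4}$ will be put into $\Psi$.

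For $D_{1}$ I use Lemma~\ref{lem.Duhamel-est-Linf} exactly as in \eqref{J2-est}. This gives $CE_{0}^{p}t^{-(7p+1)/4}\le CE_{0}^{p}t^{-2}=o(t^{-7/4})$. For $D_{2}$ I use Proposition~\ref{prop.L-decay-K}, which gives $\|D_{2}(t)\|_{L^{\infty}}\le Ct^{-7/4}\int_{t/2}^{\infty}\int_{\R}|U|dwd\tau$. This is $o(t^{-7/4})$ by \eqref{UQ-L1}.

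For $D_{3}$ I repeat the $L$–$\e_{0}$ argument from the proof of Proposition~\ref{prop.L-ap-LKPB}, now in the variables $(z,w,\tau)$. Since $u^{p+1}\in L^{1}(\R^{2}\times(0,\infty))$ by \eqref{U-est}, for any $\e_{0}>0$ there is $L$ with $\iiint_{|(z,w,\tau)|\ge L}|u^{p+1}|<\e_{0}$. On the complement, Taylor's theorem in $x$ together with Proposition~\ref{prop.L-decay-K} for $\p_{x}^{2}K$ (and $t-\tau\ge t/2$) gives $CL\,E_{0}^{p}t^{-9/4}$. The tail gives $C\e_{0}t^{-7/4}$, so $D_{3}=o(t^{-7/4})$ uniformly in $(x,y)$.

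The main obstacle is $D_{4}$, the time shift $t-\tau\to t$. Here the $y$-dependence of $K$ matters. Differentiating \eqref{DEF-K} in $t$, the self-similar prefactor and the variable $xt^{-1/2}$ produce terms bounded by $Ct^{-11/4}$ uniformly. The phase term $\frac{y^{2}}{4\e}t^{-3/2}\sqrt{r/\nu}$, which comes from the second argument $yt^{-3/4}$ through $y^{2}$, produces a term of size $C\,y^{2}t^{-3/2}\cdot t^{-11/4}$. So I expect
\[
\left|\p_{t}\p_{x}K(x,y,s)\right|\le Cs^{-\frac{11}{4}}\left(1+y^{2}s^{-\frac{3}{2}}\right).
\]
For the bounded part I would again use a truncation $\tau\le L$ versus $\tau\ge L$. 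On $\tau\le L$ the mean value theorem gives $CL\,t^{-11/4}\|u^{p+1}\|_{L^{1}_{x,y,\tau}}$, and the tail $\tau\ge L$ is bounded by $2\sup\|\p_{x}K\|_{L^{\infty}}\times$ a small $L^{1}$ mass. Both pieces go into $\Psi$.

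For the $y^{2}$ part, with argument $y-w$, I would write $(y-w)^{2}\le 2y^{2}+2w^{2}$. The $y^{2}$ piece should yield the explicit term $CE_{0}^{p}y^{2}t^{-13/4}$ after using $\int_{0}^{t/2}\|u^{p+1}(\tau)\|_{L^{1}}d\tau\le CE_{0}^{p}$ from \eqref{U-est}. The $w^{2}$ piece, as well as the precise power of $t$, is where I am least sure. If the naive bound loses a power, I would first try to avoid differentiating in $y$ altogether: compute the difference of the $r$-integrals in \eqref{DEF-K} directly, using $|\cos a-\cos b|\le\min\{2,|a-b|\}$, and keep the $w$-dependence inside a $\min\{1,\cdot\}$ so that the $w^{2}$ contribution is absorbed into $\Psi$ by the same truncation and dominated-convergence argument. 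Collecting $D_{1},\dots,D_{4}$ then gives \eqref{aniso-x} and \eqref{Psi-est}.
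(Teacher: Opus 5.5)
Your skeleton is the paper's. Your $D_1$, $D_3$, $D_2$, $D_4$ correspond to $R_1^{\delta}$, $R_2^{\delta}$, $W_1^{\delta}$, $W_2^{\delta}$ in \eqref{D_K-split-R+L}--\eqref{DEF-Wdelta}. The difference is where the time integral is cut. The paper cuts at $\delta t/2$, so the bounded part of the mean-value bound is $\le C\delta E_0^pt^{-7/4}$, and then lets $\delta\to0$. You cut at $t/2$ and truncate in $\tau$ instead. Your $L$--$\e_0$ argument for $D_3$ is a good self-contained substitute for the external lemma the paper cites for $R_2^{\delta}$. The bound $|\p_t\p_xK(x,y,s)|\le Cs^{-11/4}(1+y^2s^{-3/2})$ is correct. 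Your handling of the $y^2$ piece via $\tau\le t/2$ is exactly the paper's, and it gives $CE_0^py^2t^{-13/4}$ with a constant independent of any truncation parameter.

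The step you leave open, the $w^2$ piece, is a genuine gap as written, and the problem is not a lost power of $t$. The naive bound is $Ct^{-13/4}\int_0^{t/2}\int_{\R}w^2|U(w,\tau)|\,dw\,d\tau$. Nothing in the hypotheses makes $w^2U$ integrable; you only know $U\in L^1(\R\times(0,\infty))$. Truncating in $\tau$ alone does not help, because the same moment reappears on $\tau\le L$. What is missing is a cut in $w$.

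The paper splits at $|w|=t^{3/4}$:
\begin{itemize}
\item On $|w|\ge t^{3/4}$ it uses only the crude bound $|\p_xK(x,y-w,t-\tau)-\p_xK(x,y-w,t)|\le Ct^{-7/4}$, together with $\int_0^\infty\int_{|w|\ge t^{3/4}}|U|\,dw\,d\tau\to0$.
\item On $|w|\le t^{3/4}$ one has $w^2t^{-3/2}\le1$, so the $w^2$ term is no worse than the bounded part.
\end{itemize}
Two equivalent fixes are available to you:
\begin{itemize}
\item Truncate jointly in $(w,\tau)$, as you already did for $D_3$. On $|(w,\tau)|\le L$ the $w^2$ term is then at most $CL^3t^{-17/4}E_0^p$.
\item Take the minimum of the mean-value bound and the crude bound, and apply dominated convergence to $\min\{1,\tau t^{-1}+\tau w^2t^{-5/2}\}$ against $|U|$.
\end{itemize}
Your suggested fallback, differencing the $r$-integrals in \eqref{DEF-K}, is unnecessary. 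As stated it is also not uniform in $x$: replacing $t$ by $t-\tau$ shifts the phase $x\sqrt{r/(\nu t)}$ by an amount proportional to $|x|$. If you want a direct computation, use \eqref{K-rewrite-1}, where $x$ enters only through $e^{ix\xi}$.

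A smaller point concerns how you justify the derivative bound. You say the $xt^{-1/2}$ dependence contributes only $Cs^{-11/4}$ uniformly, but that is false on its own. $K_*(X,Y)$ depends on $(X,Y)$ only through $X+Y^2/(4\e)$, so $X\p_X^2K_*(X,Y)$ grows like $Y^2$. Only the combination of the terms gives your correct final bound. The paper avoids this by using $\p_t\p_xK=\nu\p_x^3K-\e\p_y^2K$ together with the explicit formula for $\p_y^2K_*$.
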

\begin{proof}
First, let us take a small $0<\delta<1$ and split the $\tau$-integral in \eqref{DEF-v} as follows:  
\begin{align}
&-\frac{1}{p+1}\int_{0}^{t}\p_{x}K(t-\tau)*u^{p+1}(\tau)d\tau \nonumber \\
&= -\frac{1}{p+1}\int_{\frac{\delta t}{2}}^{t}\p_{x}K(t-\tau)*u^{p+1}(\tau)d\tau \nonumber\\
&\ \ \ \, -\frac{1}{p+1}\int_{0}^{\frac{\delta t}{2}}\int_{\R^{2}} \p_{x}\left( K(x-z, y-w, t-\tau)-K(x, y-w, t-\tau) \right) u^{p+1}(z, w, \tau)dzdwd\tau \nonumber\\
&\ \ \ \, -\frac{1}{p+1}\int_{0}^{\frac{\delta t}{2}}\int_{\R^{2}} \p_{x}K(x, y-w, t-\tau) u^{p+1}(z, w, \tau) dzdwd\tau \nonumber \\
&=: R_{1}^{\delta}(x, y, t)+R_{2}^{\delta}(x, y, t)+L^{\delta}(x, y, t). \label{D_K-split-R+L}
\end{align}
In the above expression, $R_{1}^{\delta}(x, y, t)$ and $R_{2}^{\delta}(x, y, t)$ can be regarded as the remainder terms. Indeed, for $R_{1}^{\delta}(x, y, t)$, we are able to get its decay estimate analogously as \eqref{J2-est}. More precisely, there exist a certain constant $C_{\delta}>0$ and a sufficiently large $T_{\delta}\ge1$ such that 
\begin{equation}\label{Rdelta1-est}
\left\|R_{1}^{\delta}(t)\right\|_{L^{\infty}}\le C_{\delta}E_{0}^{p}t^{-\frac{7p+1}{4}}, \ \ t\ge T_{\delta}. 
\end{equation}
On the other hand, $R_{2}^{\delta}(x, y, t)$ has already been evaluated as Lemma~3.12 in \cite{FH23}. 
Since the proof is valid also under present assumptions, we omit it here. Actually, we have the same results as 
\begin{equation}\label{Rdelta2-est}
\lim_{t\to \infty}t^{\frac{7}{4}}\left\|R_{2}^{\delta}(t)\right\|_{L^{\infty}}=0. 
\end{equation}

By virtue of the above discussion, in order to prove \eqref{aniso-x}, it is sufficient to derive the leading term of $\mathcal{K}_{1}(x, y, t)+L^{\delta}(x, y, t)$. 
To do that, we further transform these integrals. Now, it follows from \eqref{DEF-mathK}, \eqref{D_K-split-R+L}, \eqref{DEF-U}, \eqref{DEF-Q} and \eqref{DEF-V} that 
\begin{align}
&\mathcal{K}_{1}(x, y, t)+L^{\delta}(x, y, t) \nonumber \\
&=\int_{\R}\p_{x}K(x, y-w, t)M_{1}(w)dw-\frac{1}{p+1}\int_{0}^{\frac{\delta t}{2}}\int_{\R^{2}}\p_{x}K(x, y-w, t-\tau)u^{p+1}(z, w, \tau)dzdwd\tau \nonumber \\
 &=\int_{\R}\p_{x}K(x, y-w, t)M_{1}(w)dw+\int_{0}^{\frac{\delta t}{2}}\int_{\R}\p_{x}K(x, y-w, t)U(w, \tau)dwd\tau \nonumber\\
 &\ \ \ +\int_{0}^{\frac{\delta t}{2}}\int_{\R}\left(\p_{x}K(x, y-w, t-\tau)-\p_{x}K(x, y-w, t)\right)U(w, \tau)dwd\tau \nonumber \\
 &=\int_{\R}\p_{x}K(x, y-w, t)Q(w)dw-\int_{\frac{\delta t}{2}}^{\infty}\int_{\R}\p_{x}K(x, y-w, t)U(w, \tau) dwd\tau \nonumber \\
&\ \ \ +\int_{0}^{\frac{\delta t}{2}}\int_{\R}\left(\p_{x}K(x, y-w, t-\tau)-\p_{x}K(x, y-w, t)\right)U(w, \tau) dwd\tau \nonumber \\
&=:V(x, y, t)+W_{1}^{\delta}(x, y, t)+W_{2}^{\delta}(x, y, t). \label{DEF-Wdelta}
\end{align}
Then, using Proposition~\ref{prop.L-decay-K} and the fact $U\in L^{1}(\R \times [0, \infty))$ comes from \eqref{UQ-L1}, we have 
\begin{align}
\limsup_{t \to \infty}t^{\frac{7}{4}}\left\|W_{1}^{\delta}(t)\right\|_{L^{\infty}} 
&\le \limsup_{t \to \infty}t^{\frac{7}{4}}\int_{\frac{\delta t}{2}}^{\infty}\int_{\R}\left\|\p_{x}K(\cdot, \cdot-w, t)\right\|_{L^{\infty}}\left|U(w, \tau)\right|dwd\tau \nonumber \\
&\le C\lim_{t \to \infty}\int_{\frac{\delta t}{2}}^{\infty}\int_{\R}\left|U(w, \tau)\right|dwd\tau=0. \label{W1-est}
\end{align}
Therefore, it suffices to analyze $W_{2}^{\delta}(x, y, t)$ only. To do that, let us decompose it as follows: 
\begin{align}
W_{2}^{\delta}(x, y, t)&=\int_{0}^{\frac{\delta t}{2}}\left(\int_{|w|\ge t^{\frac{3}{4}}}+\int_{|w|\le t^{\frac{3}{4}}}\right)\left(\p_{x}K(x, y-w, t-\tau)-\p_{x}K(x, y-w, t)\right)U(w, \tau) dwd\tau \nonumber \\
&=:W_{2.1}^{\delta}(x, y, t)+W_{2.2}^{\delta}(x, y, t).  \label{W2.1W2.2}
\end{align}

In what follows, we would like to evaluate $W_{2.1}^{\delta}(x, y, t)$ and $W_{2.2}^{\delta}(x, y, t)$. 
For $W_{2.1}^{\delta}(x, y, t)$, using Proposition~\ref{prop.L-decay-K} and the fact $U\in L^{1}(\R \times [0, \infty))$ again, it immediately follows from the Lebesgue dominated convergence theorem that 
\begin{align}
&\limsup_{t \to \infty}t^{\frac{7}{4}}\left\|W_{2.1}^{\delta}(t)\right\|_{L^{\infty}}  \nonumber \\
&\le \limsup_{t \to \infty}t^{\frac{7}{4}} \int_{0}^{\frac{\delta t}{2}}\int_{|w|\ge t^{\frac{3}{4}}}\left( \left\|\p_{x}K(\cdot, \cdot-w, t-\tau)\right\|_{L^{\infty}}+\left\|\p_{x}K(\cdot, \cdot-w, t)\right\|_{L^{\infty}}   \right)\left|U(w, \tau)\right|dwd\tau \nonumber \\
&\le C\limsup_{t \to \infty}t^{\frac{7}{4}} \int_{0}^{\frac{\delta t}{2}}\int_{|w|\ge t^{\frac{3}{4}}}\left\{ (t-\tau)^{-\frac{7}{4}}+t^{-\frac{7}{4}}\right\}\left|U(w, \tau)\right|dwd\tau \nonumber \\
&\le C\limsup_{t \to \infty}t^{\frac{7}{4}} \left\{ \left(t-\frac{\delta t}{2}\right)^{-\frac{7}{4}}+t^{-\frac{7}{4}} \right\} \int_{0}^{\frac{\delta t}{2}}\int_{|w|\ge t^{\frac{3}{4}}}\left|U(w, \tau)\right|dwd\tau \nonumber \\
&\le C\left\{\left(1-\frac{\delta}{2}\right)^{-\frac{7}{4}}+1\right\}\lim_{t\to \infty}\int_{0}^{\infty}\int_{|w|\ge t^{\frac{3}{4}}}\left|U(w, \tau)\right|dwd\tau =0. \label{est-W2.1}
\end{align}

Next, we shall treat $W_{2.2}^{\delta}(x, y, t)$. First, applying the mean value theorem to the difference in the integrand in \eqref{W2.1W2.2}, we have 
\begin{equation*}
\p_{x}K(x, y-w, t-\tau)-\p_{x}K(x, y-w, t)
=-\tau \int_{0}^{1}(\p_{t}\p_{x}K)(x, y-w, t-\theta \tau)d\theta. 
\end{equation*}
Now, recalling that $K(x, y, t)$ is the fundamental solution to \eqref{w-linear}, we obtain $\p_{t}\p_{x}K=\nu \p_{x}^{3}K-\e \p_{y}^{2}K$ for all $(x, y)\in \R^{2}$ and $t>0$. Therefore, we can see that 
\begin{align}
W_{2.2}^{\delta}(x, y, t)&=-\nu \int_{0}^{\frac{\delta t}{2}}\int_{|w|\le t^{\frac{3}{4}}}\tau\left(\int_{0}^{1}\p_{x}^{3}K(x, y-w, t-\theta \tau)d\theta\right)\left|U(w, \tau)\right|dwd\tau \nonumber \\
&\ \ \ +\e \int_{0}^{\frac{\delta t}{2}}\int_{|w|\le t^{\frac{3}{4}}}\tau\left(\int_{0}^{1}(\p_{y}^{2}K)(x, y-w, t-\theta \tau)d\theta\right)\left|U(w, \tau)\right|dwd\tau \nonumber \\
&=:W_{2.2.1}^{\delta}(x, y, t)+W_{2.2.2}^{\delta}(x, y, t). \label{W221-W222}
\end{align}
Under this expression, it follows from Proposition~\ref{prop.L-decay-K} and \eqref{U-est} that 
\begin{align}
&\limsup_{t\to \infty}t^{\frac{7}{4}}\left\|W_{2.2.1}^{\delta}(t)\right\|_{L^{\infty}} \nonumber \\
&\le C\limsup_{t\to \infty}t^{\frac{7}{4}}\int_{0}^{\frac{\delta t}{2}}\int_{|w|\le t^{\frac{3}{4}}}|\tau|\left(\int_{0}^{1}\left\|\p_{x}^{3}K(\cdot, \cdot-w, t-\theta \tau)\right\|_{L^{\infty}}d\theta\right)\left|U(w, \tau)\right|dwd\tau \nonumber \\
&\le C\delta \limsup_{t\to \infty}t^{\frac{11}{4}}\int_{0}^{\frac{\delta t}{2}}\int_{|w|\le t^{\frac{3}{4}}}\left(\int_{0}^{1}(t-\theta \tau)^{-\frac{11}{4}}d\theta\right)\left|U(w, \tau)\right|dwd\tau \nonumber \\
&\le C\delta\left(1-\frac{\delta}{2}\right)^{-\frac{11}{4}}\int_{0}^{\infty}\int_{\R} \left|U(w, \tau)\right|dwd\tau 
\le C\delta E_{0}^{p}. \label{W221-est}
\end{align}

In the rest of this proof, we would like to evaluate $W_{2.2.2}^{\delta}(x, y, t)$. To do that, let us calculate $(\p_{y}^{2}K)(x, y-w, t-\theta \tau)$. First, we have from the definition of $K(x, y, t)$, i.e., \eqref{DEF-K} that   
\[
(\p_{y}^{2}K)(x, y, t)=t^{-\frac{11}{4}}(\p_{y}^{2}K_{*})\left(xt^{-\frac{1}{2}}, yt^{-\frac{3}{4}}\right). 
\]
Then, we need to consider $\p_{y}^{2}K_{*}(x, y)$. For simplicity, we set 
\[
c_{0}:=\frac{1}{4\pi^{\frac{3}{2}}\nu^{\frac{3}{4}}}, \qquad \Theta(x, y, r):=x \sqrt{\frac{r}{\nu}} +\frac{y^{2}}{4\e}\sqrt{\frac{r}{\nu}} - \frac{\pi}{4}\e. 
\]
Under this notations, a direct calculation yields  
\begin{align*}
&K_{*}(x, y)=c_{0}\int_{0}^{\infty}r^{-\frac{1}{4}}e^{-r}\cos \Theta(x, y, r)dr, \ \ 
\p_{y}K_{*}(x, y)=-\frac{c_{0}y}{2\e\sqrt{\nu}}\int_{0}^{\infty}r^{\frac{1}{4}}e^{-r}\sin \Theta(x, y, r)dr,  \\
&\p_{y}^{2}K_{*}(x, y)=-\frac{c_{0}y^{2}}{4\nu}\int_{0}^{\infty}r^{\frac{3}{4}}e^{-r}\cos \Theta(x, y, r)dr
-\frac{c_{0}}{2\e\sqrt{\nu}}\int_{0}^{\infty}r^{\frac{1}{4}}e^{-r}\sin \Theta(x, y, r)dr. 
\end{align*}
Combining, all the above results, we arrive at 
\begin{align}
\int_{0}^{1}\left|(\p_{y}^{2}K)(x, y-w, t-\theta \tau) \right|d\theta  
&\le C(y-w)^{2}\int_{0}^{1}(t-\theta \tau)^{-\frac{17}{4}}d\theta+C\int_{0}^{1}(t-\theta \tau)^{-\frac{11}{4}}d\theta \nonumber \\
&\le C(y^{2}+w^{2})(t-\tau)^{-\frac{17}{4}}+C(t-\tau)^{-\frac{11}{4}}. \label{y2-bibun}
\end{align}
Thus, from \eqref{W221-W222}, \eqref{y2-bibun} and \eqref{U-est}, for each $(x, y) \in \R^{2}$ and $t>0$, we obtain 
\begin{align}
\left|W_{2.2.2}^{\delta}(x, y, t)\right|
&\le C\int_{0}^{\frac{\delta t}{2}}\int_{|w|\le t^{\frac{3}{4}}}|\tau|(t-\tau)^{-\frac{17}{4}}(y^{2}+w^{2})\left|U(w, \tau)\right|dwd\tau \nonumber \\
&\ \ \ +C\int_{0}^{\frac{\delta t}{2}}\int_{|w|\le t^{\frac{3}{4}}}|\tau|(t-\tau)^{-\frac{11}{4}}\left|U(w, \tau)\right|dwd\tau \nonumber \\
&\le C\delta t \left(1-\frac{\delta}{2}\right)^{-\frac{17}{4}}t^{-\frac{17}{4}}\int_{0}^{\frac{\delta t}{2}}\int_{|w|\le t^{\frac{3}{4}}}\left(y^{2}+t^{\frac{3}{2}}\right)\left|U(w, \tau)\right|dwd\tau \nonumber  \\
&\ \ \ +C\delta t \left(1-\frac{\delta}{2}\right)^{-\frac{11}{4}}t^{-\frac{11}{4}}\int_{0}^{\frac{\delta t}{2}}\int_{|w|\le t^{\frac{3}{4}}}\left|U(w, \tau)\right|dwd\tau \nonumber \\
&\le CE_{0}^{p}y^{2}t^{-\frac{13}{4}}+C\delta E_{0}^{p}t^{-\frac{7}{4}}.  \label{W222-est}
\end{align}

As a conclusion, summing up \eqref{DEF-v}, \eqref{D_K-split-R+L} through \eqref{W221-est} and \eqref{W222-est}, since $0<\delta<1$ can be chosen arbitrary small, we can eventually see that there exists a remainder function $\Psi(x, y, t)$ satisfying the desired results \eqref{aniso-x} and \eqref{Psi-est}. This completes the proof. 
\end{proof}

\noindent
\underline{\bf Step 4. Anisotropic Asymptotic Expansion on $y$}

\smallskip
Finally, in the rest of this paper, an anisotropic asymptotic expansion in the $y$-direction is performed for $V(x, y, t)$ to derive the asymptotic profile of the solution to \eqref{KPB}. The idea of the proof is based on a combination of the Dollard decomposition for the Schr\"{o}dinger equation and the asymptotic analysis for parabolic equations. Indeed, we can get the following result: 
\begin{prop}\label{prop.aniso-y}
Let $p\ge1$ be an integer. Assume that $u_{0}\in X^{3}(\R^{2})\cap L^{1}(\R^{2})$, $xu_{0}\in L^{1}(\R^{2})$ and $B(u_{0})$ is sufficiently small. 
Then, there exist a bounded function $\omega : [0, \infty)\to [0, \infty)$ satisfying $\lim_{\rho \to 0+}\omega(\rho)=0$, and a remainder function $\Phi(x, y, t)$ such that 
\begin{equation}\label{aniso-y}
\left|V(x, y, t)-\mathcal{N}_{0}\p_{x}K(x, y, t)\right|\le t^{-\frac{7}{4}}\omega\left(|y|t^{-\frac{3}{2}}\right)+\Phi(x, y, t), 
\end{equation}
for each $(x, y)\in \R^{2}$, $t>0$, and 
\begin{equation}\label{Phi-est}
\lim_{t\to \infty}t^{\frac{7}{4}}\left\|\Phi(t)\right\|_{L^{\infty}}=0, 
\end{equation}
where $V(x, y, t)$, $K(x, y, t)$ and $\mathcal{N}_{0}$ are defined by \eqref{DEF-V}, \eqref{DEF-K} and \eqref{DEF-mathN}, respectively. 
\end{prop}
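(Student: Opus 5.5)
The plan is to compare $V$ and $\mathcal{N}_{0}\p_{x}K$ directly in a partial Fourier representation in the $x$-variable. Recall from \eqref{N0-henkei} that $\mathcal{N}_{0}=\int_{\R}Q(w)\,dw$, and from \eqref{UQ-L1} that $Q\in L^{1}(\R)$. Therefore
\[
V(x,y,t)-\mathcal{N}_{0}\p_{x}K(x,y,t)=\int_{\R}\bigl(\p_{x}K(x,y-w,t)-\p_{x}K(x,y,t)\bigr)Q(w)\,dw .
\]
For $\p_{x}K$ I will use the formula obtained in the proof of Proposition~\ref{prop.L-ap-S}, namely the case without the $\xi^{3}$-factor and with the Fresnel computation for $\mathcal{F}^{-1}_{\eta}$:
\[
\p_{x}K(x,y,t)=c\,t^{-\frac12}\int_{\R}(i\xi)|\xi|^{\frac12}e^{-\nu t\xi^{2}+ix\xi}e^{\frac{i\xi y^{2}}{4t\e}-\frac{i\pi}{4}\e\,\mathrm{sgn}\,\xi}\,d\xi .
\]
The Schr\"odinger-type phase is $e^{i\xi y^{2}/(4t\e)}$. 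In the spirit of the Dollard decomposition I expand
\[
(y-w)^{2}=y^{2}-2yw+w^{2},
\]
so that the phase for $y-w$ equals the phase for $y$ multiplied by $e^{-i\xi yw/(2t\e)}\,e^{i\xi w^{2}/(4t\e)}$. Consequently the difference is $c\,t^{-1/2}\int_{\R}Q(w)\int_{\R}(\cdots)\bigl(e^{-i\xi yw/(2t\e)}e^{i\xi w^{2}/(4t\e)}-1\bigr)\,d\xi\,dw$.

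Next I split the multiplier using $|ab-1|\le|a-1|+|b-1|$ for unimodular $a,b$. Taking absolute values inside the $\xi$-integral, rescaling $\xi=\eta t^{-1/2}$, and noting that $t^{-1/2}\cdot t^{-1/2}\cdot t^{-1/4}\cdot t^{-1/2}=t^{-7/4}$, I obtain
\[
\bigl|V-\mathcal{N}_{0}\p_{x}K\bigr|\le C t^{-\frac74}\int_{\R}|Q(w)|\int_{\R}|\eta|^{\frac32}e^{-\nu\eta^{2}}\Bigl(\min\{2,\tfrac{|\eta||y||w|}{2}t^{-\frac32}\}+\min\{2,\tfrac{|\eta|w^{2}}{4}t^{-\frac32}\}\Bigr)d\eta\,dw .
\]
I then treat the two pieces separately.

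The $w^{2}$-piece does not depend on $(x,y)$. I define $\Phi$ to be this piece, or more precisely the part of the difference it controls, so that the needed pointwise bound holds. Then $t^{7/4}\|\Phi(t)\|_{L^{\infty}}\le C\int|Q(w)|\int|\eta|^{3/2}e^{-\nu\eta^{2}}\min\{2,|\eta|w^{2}t^{-3/2}\}\,d\eta\,dw$. This tends to $0$ by dominated convergence, since $Q\in L^{1}$ and the integrand is bounded by $2|Q(w)||\eta|^{3/2}e^{-\nu\eta^{2}}$. This gives \eqref{Phi-est}.

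For the cross piece I set
\[
\omega(\rho):=C\int_{\R}|Q(w)|\int_{\R}|\eta|^{\frac32}e^{-\nu\eta^{2}}\min\{2,\rho|\eta||w|\}\,d\eta\,dw .
\]
It is bounded by $4C\|Q\|_{L^{1}}\int|\eta|^{3/2}e^{-\nu\eta^{2}}d\eta$. It tends to $0$ as $\rho\to0+$, again by dominated convergence. Evaluated at $\rho=|y|t^{-3/2}$, it yields the term $t^{-7/4}\omega(|y|t^{-3/2})$ in \eqref{aniso-y}.

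The main point to check carefully is the legitimacy of the Fourier representation and the interchange of the $w$- and $\xi$-integrals. The only unbounded factor is the Fresnel phase, whose modulus is $1$, so Fubini applies thanks to $e^{-\nu t\xi^{2}}$ and $Q\in L^{1}$. The other delicate point is bookkeeping: the bounds must be pointwise in $(x,y)$, and the splitting of the difference into an $\omega$-part and a $\Phi$-part must be done before taking absolute values. To handle this, I will define $\Phi$ as the exact difference minus the cross-term contribution and bound each part separately.
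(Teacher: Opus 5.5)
Your proof is correct, and it takes a genuinely different and more direct route than the paper's.

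\textbf{The paper's route.} The paper truncates to $|w|\le t^{\beta}$ with $0<\beta<3/4$ and discards the tail using $Q\in L^{1}(\R)$. On the truncated region it linearizes only the phase $e^{i\xi w^{2}/(4t\e)}$, which gives the explicit bound $\|\chi_{3}(t)\|_{L^{\infty}}\le C\|Q\|_{L^{1}}t^{-13/4+2\beta}$. For the main piece $\chi_{1}$ it identifies $\int_{\R}Q(w)e^{-i\xi yw/(2t\e)}dw$ with a dilated $\hat{Q}$ and passes back to physical space. This yields $\chi_{1}-\mathcal{N}_{0}\p_{x}K=\int_{\R}\{\p_{x}K(x-\frac{yw}{2t\e},y,t)-\p_{x}K(x,y,t)\}Q(w)\,dw$, with a separate check at $y=0$. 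It then closes with the mean value theorem and $\|\p_{x}^{2}K(t)\|_{L^{\infty}}$.

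\textbf{Your route.} You factor both phases at once for every $w$ and use $|ab-1|\le|a-1|+|b-1|$ together with $|e^{i\alpha}-1|\le\min\{2,|\alpha|\}$. Dominated convergence then replaces the cutoff and compensates for $Q$ possibly lacking a second moment. No dilation identity and no special case $y=0$ are needed.

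\textbf{What each buys.} The paper's route gives an explicit picture of the leading correction as an $x$-translation of $\p_{x}K$ by $yw/(2t\e)$, the Dollard-type structure. It also gives a quantitative rate for the $w^{2}$-contribution on $|w|\le t^{\beta}$. Since the tail is only $o(1)$, however, the final statement is the same.

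Your factorization reflects the identity $\p_{x}K(x,y-w,t)=\p_{x}K(x-\frac{yw}{2t\e}+\frac{w^{2}}{4t\e},y,t)$. This holds because, by \eqref{DEF-K}, $K$ depends on $(x,y)$ only through $x+y^{2}/(4t\e)$. So the same estimate can also be done in physical space with the mean value theorem.

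\textbf{Two minor points.} First, justify your formula for $\p_{x}K$ directly from \eqref{DEF-K} via the substitution $r=\nu t\xi^{2}$, as in \eqref{K-rewrite-1}, rather than through the formal Fresnel integral. Second, there is no need to split before taking absolute values. You may simply take $\Phi(x,y,t)$ to be the nonnegative, $(x,y)$-independent bound for the $w^{2}$-piece. If you instead define $\Phi$ as an exact signed piece of the difference, you must put $|\Phi|$ on the right-hand side.
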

\begin{proof}
First, we shall rewrite the integral kernel $K(x, y, t)$ defined in \eqref{DEF-K} as follows: 
\begin{align}
K(x, y, t)&= t^{ -\frac{5}{4} } K_{*} \left( xt^{-\frac{1}{2}}, yt^{-\frac{3}{4}} \right) \nonumber  \\
& = \frac{  t^{ -\frac{5}{4} } }{ 4\pi^{ \frac{3}{2} } \nu^{\frac{3}{4}}} \int_{0}^{\infty} r^{-\frac{1}{4} }e^{ -r } \cos \left( x \sqrt{\frac{r}{\nu t}} + \frac{y^{2}}{4\e}\sqrt{\frac{r}{\nu}}t^{-\frac{3}{2}} - \frac{\pi}{4}\e \right) dr \nonumber \\
& = \frac{  t^{ -\frac{5}{4} } }{ 8\pi^{ \frac{3}{2} } \nu^{\frac{3}{4}}} \int_{0}^{\infty} r^{-\frac{1}{4} }e^{ -r } \left( e^{ \frac{iy^{2}}{4\e}\sqrt{\frac{r}{\nu}}t^{-\frac{3}{2}} - \frac{i\pi}{4}\e +ix \sqrt{\frac{r}{\nu t}}} + e^{ -\frac{iy^{2}}{4\e}\sqrt{\frac{r}{\nu}}t^{-\frac{3}{2}} +\frac{i\pi}{4}\e -ix \sqrt{\frac{r}{\nu t}}}  \right) dr \nonumber \\
& = \frac{t^{ -\frac{1}{2} } }{ 4\pi^{ \frac{3}{2} }} \left( \int_{0}^{\infty} |\xi|^{ \frac{1}{2} } e^{ -\nu t \xi^{2}+\frac{i\xi y^{2}}{4t\e} - \frac{i\pi}{4}\e} e^{ix\xi} d\xi 
+ \int_{-\infty}^{0} |\xi|^{ \frac{1}{2} } e^{ -\nu t \xi^{2} +\frac{i\xi y^{2}}{4t\e} + \frac{i\pi}{4}\e} e^{ix\xi} d\xi \right) \nonumber \\
& = \frac{t^{ -\frac{1}{2} }}{ 4\pi^{ \frac{3}{2} }} \int_{\R} |\xi|^{ \frac{1}{2} } e^{ -\nu t \xi^{2} + \frac{i\xi y^{2}}{4t\e} - \frac{i\pi}{4}\e \mathrm{sgn} \xi } e^{ix\xi} d\xi,  \label{K-rewrite-1} 
\end{align}
and then the anisotropic Fourier transform of $\p_{x}K(x, y, t)$ on the $x$-direction can be given by 
\begin{equation}
\mathcal{F}_{x}[\p_{x}K](\xi, y, t)=\frac{t^{ -\frac{1}{2} }}{ 2^{ \frac{3}{2} }\pi} |\xi|^{ \frac{1}{2} } e^{ -\nu t \xi^{2} + \frac{i\xi y^{2}}{4t\e} - \frac{i\pi}{4}\e \mathrm{sgn} \xi }(i \xi). \label{K-rewrite-2} 
\end{equation}

Next, let us take $0< \beta<3/4$ and decompose $V(x, y, t)$ in \eqref{DEF-V} as 
\begin{align}
V(x, y, t)=\left(\int_{|w|\le t^{\beta}}+\int_{|w|\ge t^{\beta}}\right)\p_{x}K(x, y-w, t)Q(w)dw=:Z_{1}(x, y, t)+Z_{2}(x, y, t). \label{DEF-Z}
\end{align}
Then, analogously as \eqref{est-W2.1}, by virtue of $Q\in L^{1}(\R)$, which follows from \eqref{UQ-L1}, we have 
\begin{align}
\limsup_{t \to \infty}t^{\frac{7}{4}}\left\|Z_{2}(t)\right\|_{L^{\infty}}
\le C\lim_{t\to \infty}\int_{|w|\ge t^{\beta}}\left|Q(w)\right|dw=0.  \label{Z2-est}
\end{align}

In what follows, we would like to analyze $Z_{1}(x, y, t)$. It follows from \eqref{K-rewrite-1} that 
\begin{align*}
Z_{1}(x, y, t)=\frac{t^{ -\frac{1}{2} }}{ 4\pi^{ \frac{3}{2} }} \int_{\R}|\xi|^{ \frac{1}{2} } e^{ -\nu t \xi^{2} - \frac{i\pi}{4}\e \mathrm{sgn} \xi } (i\xi)e^{ix\xi} \left(\int_{|w|\le t^{\beta}}e^{ \frac{i\xi (y-w)^{2}}{4t\e} }Q(w)dw\right) d\xi. 
\end{align*}
Using the identity 
\[
e^{iz}=1+iz\int_{0}^{1}e^{i\theta z}d\theta, \ \ z\in \mathbb{R}, 
\]
the $w$-integral on the right-hand side above can be rewritten as
\begin{align*}
\int_{|w|\le t^{\beta}}e^{ \frac{i\xi (y-w)^{2}}{4t\e} }Q(w)dw
&=e^{ \frac{i\xi y^{2}}{4t\e} }\int_{|w|\le t^{\beta}}e^{ \frac{i\xi w^{2}}{4t\e} -\frac{i\xi yw}{2t\e }}Q(w)dw \nonumber \\
&=e^{ \frac{i\xi y^{2}}{4t\e} }\int_{|w|\le t^{\beta}}\left(1+\frac{i\xi w^{2}}{4t\e} \int_{0}^{1} e^{ \frac{i\theta \xi w^{2}}{4t\e} } d\theta \right)e^{-\frac{i\xi yw}{2t\e }}Q(w)dw \nonumber \\
&=e^{ \frac{i\xi y^{2}}{4t\e} }\int_{\R}Q(w)e^{-\frac{i\xi yw}{2t\e }}dw-e^{ \frac{i\xi y^{2}}{4t\e} }\int_{|w|\ge t^{\beta}}Q(w)e^{-\frac{i\xi yw}{2t\e }}dw \nonumber \\
&\ \ \ +e^{ \frac{i\xi y^{2}}{4t\e} }\int_{|w|\le t^{\beta}} \left( \int_{0}^{1} e^{ \frac{i\theta \xi w^{2}}{4t\e} } d\theta \right)\frac{i\xi w^{2}}{4t\e}e^{-\frac{i\xi yw}{2t\e }}Q(w)dw. 
\end{align*}
Therefore, we see that $Z_{1}(x, y, t)$ can be rewritten by 
\begin{align}
Z_{1}(x, y, t)&=\frac{t^{ -\frac{1}{2} }}{ 4\pi^{ \frac{3}{2} }} \int_{\R}|\xi|^{ \frac{1}{2} } e^{ -\nu t \xi^{2} +\frac{i\xi y^{2}}{4t\e} - \frac{i\pi}{4}\e \mathrm{sgn} \xi } (i\xi)e^{ix\xi} \left(\int_{\R}Q(w)e^{ -\frac{i\xi yw}{2t\e} }dw\right) d\xi \nonumber \\
&-\frac{t^{ -\frac{1}{2} }}{ 4\pi^{ \frac{3}{2} }} \int_{\R}|\xi|^{ \frac{1}{2} } e^{ -\nu t \xi^{2} +\frac{i\xi y^{2}}{4t\e} - \frac{i\pi}{4}\e \mathrm{sgn} \xi } (i\xi)e^{ix\xi} \left(\int_{|w|\ge t^{\beta}}Q(w)e^{ -\frac{i\xi yw}{2t\e} }dw\right) d\xi \nonumber \\
&+\frac{t^{ -\frac{1}{2} }}{ 4\pi^{ \frac{3}{2} }} \int_{\R}|\xi|^{ \frac{1}{2} } e^{ -\nu t \xi^{2} +\frac{i\xi y^{2}}{4t\e} - \frac{i\pi}{4}\e \mathrm{sgn} \xi } (i\xi)e^{ix\xi} \left\{\int_{|w|\le t^{\beta}}\left(\int_{0}^{1} e^{ \frac{i\theta \xi w^{2}}{4t\e} } d\theta \right) \frac{i\xi w^{2}}{4t\e}e^{-\frac{i\xi yw}{2t\e }}Q(w)dw\right\} d\xi \nonumber\\
&=:\chi_{1}(x, y, t)+\chi_{2}(x, y, t)+\chi_{3}(x, y, t).  \label{Z1}
\end{align}
Now, for $\chi_{2}(x, y, t)$ and $\chi_{3}(x, y, t)$, some direct calculations immediately yield that 
\begin{align*}
\left\|\chi_{2}(t)\right\|_{L^{\infty}}
&\le Ct^{-\frac{1}{2}}\int_{\R}|\xi|^{\frac{3}{2}}e^{-\nu t\xi^{2}}\left(\int_{|w|\ge t^{\beta}}\left|Q(w)\right|dw\right)d\xi 
\le Ct^{-\frac{7}{4}}\int_{|w|\ge t^{\beta}}\left|Q(w)\right|dw, \ \ t>0, \\
\left\|\chi_{3}(t)\right\|_{L^{\infty}}
&\le Ct^{-\frac{1}{2}}\int_{\R}|\xi|^{\frac{3}{2}}e^{-\nu t\xi^{2}}\left(\int_{|w|\le t^{\beta}}|\xi||w|^{2}t^{-1}\left|Q(w)\right|dw\right)d\xi \\
&\le Ct^{-\frac{3}{2}+2\beta}\left(\int_{\R}|\xi|^{\frac{5}{2}}e^{-\nu t\xi^{2}}d\xi\right)\left(\int_{|w|\le t^{\beta}}\left|Q(w)\right|dw\right) 
\le C\left\|Q\right\|_{L^{1}}t^{-\frac{13}{4}+2\beta}, \ \ t>0. 
\end{align*}
In addition, recalling \eqref{UQ-L1} again and noticing that $13/4-2\beta>7/4$ holds under the current situation $0<\beta<3/4$, analogously as \eqref{Z2-est}, we have 
\begin{equation}\label{chi2chi3-est}
\lim_{t \to \infty}t^{\frac{7}{4}}\left(\left\|\chi_{2}(t)\right\|_{L^{\infty}}+\left\|\chi_{3}(t)\right\|_{L^{\infty}}\right)=0. 
\end{equation}

Finally, let us deal with $\chi_{1}(x, y, t)$ and derive the leading term of this function. Now, using \eqref{Z1}, \eqref{K-rewrite-2} and some basic properties of the Fourier transform, we can see that 
 \begin{align}
 \begin{split}\label{chi-henkei}
\chi_{1}(x, y, t)&=\frac{t^{ -\frac{1}{2} }}{ 2^{ \frac{3}{2} }\pi} \int_{\R}|\xi|^{ \frac{1}{2} } e^{ -\nu t \xi^{2} + \frac{i\xi y^{2}}{4t\e} - \frac{i\pi}{4}\e \mathrm{sgn} \xi }(i \xi)\hat{Q}\left(\frac{\xi y}{2t\e}\right)e^{ix\xi}d\xi \\
&=\mathcal{F}^{-1}_{\xi}\left[\sqrt{2\pi}\mathcal{F}_{x}[\p_{x}K](\xi, y, t) \hat{Q}\left(\frac{\xi y}{2t\e}\right) \right](x) \\
&=\left(\p_{x}K(\cdot, y, t)*\mathcal{F}_{\xi}^{-1}\left[\hat{Q}\left(\frac{\xi y}{2t\e}\right)\right](\cdot)\right)(x), \ \ (x, y)\in \R^{2}, \ t>0. 
\end{split}
\end{align}
In addition, it follows from the dilation property of the Fourier transform that 
\[
\mathcal{F}_{\xi}^{-1}\left[\hat{Q}\left(\frac{\xi y}{2t\e}\right)\right](x)=\left|\frac{2t\e}{y}\right| Q\left(\frac{2t\e}{y}x\right)
\]
holds for each $y\neq0$ and $t>0$. Thus, combining these results and \eqref{N0-henkei}, we have 
\begin{align}
&\chi_{1}(x, y, t)-\mathcal{N}_{0}\p_{x}K(x, y, t) \nonumber \\
&=\int_{\R}\p_{x}K(x-z, y, t)\left|\frac{2t\e}{y}\right| Q\left(\frac{2t\e}{y}z\right)dz-\left(\int_{\R}Q(w)dw\right)\p_{x}K(x, y, t) \nonumber \\
&=\int_{\R}
\left\{\p_{x}K\left(x-\frac{yw}{2t\varepsilon}, y, t \right)-\p_{x}K(x,y,t) \right\}Q(w)dw 
\label{chi-final}
\end{align}
for $x\in \R$, $y\neq0$ and $t>0$. Moreover, if $y=0$, we note that $\sqrt{2\pi}\hat{Q}(0)=\int_{\R}Q(w)dw=\mathcal{N}_{0}$ holds since \eqref{N0-henkei}. 
Hence, we obtain from \eqref{chi-henkei} that $\chi_{1}(x, 0, t)=\mathcal{N}_{0}\p_{x}K(x, 0, t)$. 
In what follows, we shall focus on the integrand in \eqref{chi-final}. 
By the mean value theorem and Proposition~\ref{prop.L-decay-K}, we get
\begin{align}
\left|\p_{x}K\left(x-\frac{yw}{2t\varepsilon}, y, t \right)-\p_{x}K(x, y, t)\right| 
&\le \frac{|yw|}{2t}\int_{0}^{1}\left|\p_{x}^{2}K\left(x-\theta \frac{yw}{2t\varepsilon}, y, t \right)\right|\theta \nonumber \\
&\le \frac{|yw|}{2t}\cdot \left\|\p_{x}^{2}K(t)\right\|_{L^{\infty}}
\le C|w||y|t^{-\frac{13}{4}}.  \label{last-est1}
\end{align} 
On the other hand, Proposition~\ref{prop.L-decay-K} and the triangle inequality also give us that 
\begin{equation}\label{last-est2}
\left|\p_{x}K\left(x-\frac{yw}{2t\varepsilon}, y, t \right)-\p_{x}K(x, y, t)\right| \le 2\left\|\p_{x}K(t)\right\|_{L^{\infty}} \le Ct^{-\frac{7}{4}}. 
\end{equation}
Hence, summing up \eqref{chi-final}, \eqref{last-est1} and \eqref{last-est2}, there exists a positive constant $C_{0}>0$ such that 
\begin{align}
\left|\chi_{1}(x, y, t)-\mathcal{N}_{0}\p_{x}K(x, y, t)\right| 
&\le C_{0}\int_{\R}\min\left\{|w||y|t^{-\frac{13}{4}}, t^{-\frac{7}{4}}\right\}\left|Q(w)\right|dw \nonumber \\
&= t^{-\frac{7}{4}}\omega\left(|y|t^{-\frac{3}{2}}\right), \ \ (x, y)\in \R^{2}, \ t>0, \label{chi1-asymp}
\end{align}
where the above function $\omega : [0, \infty)\to [0, \infty)$ is defined by
\[
\omega(\rho):=C_{0}\int_{\mathbb R}\min\left\{\rho|w|,1\right\}\left|Q(w)\right|dw.
\]
Since $Q\in L^1(\mathbb R)$ by \eqref{UQ-L1}, the Lebesgue dominated convergence theorem yields $\lim_{\rho \to 0+}\omega(\rho)=0$. 

As a conclusion, combining \eqref{DEF-Z}, \eqref{Z2-est}, \eqref{Z1}, \eqref{chi2chi3-est} and \eqref{chi1-asymp}, we can eventually see that there exists a remainder function $\Phi(x, y, t)$ satisfying the desired results \eqref{aniso-y} and \eqref{Phi-est}. This completes the proof. 
\end{proof}

\begin{proof}[\rm{\bf{End of the Proof of Theorem~\ref{thm.main}}}]
First, the decay estimate \eqref{u-sol-decay} is a direct consequence of Proposition~\ref{prop.main-decay}. 
Moreover, summarizing up all the results of \eqref{IE-re}, Corollary~\ref{cor.asymp-linear}, Proposition~\ref{prop.asymp-D-S-K}, \eqref{DEF-v}, \eqref{DEF-V}, Propositions~\ref{prop.aniso-x} and \ref{prop.aniso-y}, we can conclude that the desired asymptotic formula \eqref{u-sol-asymp} has been established. 
Furthermore, if $|y|\leq r(t)$ with $r(t)=o(t^{3/4})$ ($t\to \infty$), then $r(t)^2t^{-3/2}\to0$ and $r(t)t^{-3/2}\to0$ as $t\to\infty$.
Therefore, \eqref{u-sol-asymp-2} follows immediately from \eqref{u-sol-asymp}, \eqref{Rr-est} and
the fact that $\omega(\rho)\to0$ as $\rho\to0+$. This completes the proof. 
\end{proof}

\section*{Acknowledgments}

\indent

This study is supported by JSPS KAKENHI Grant Number 22K13939.
The author would like to express his sincere appreciation to Professor Hiroyuki Hirayama in University of Miyazaki, for valuable comments and stimulating discussions.
The author is also grateful to Professor Shuji Yoshikawa in Hiroshima University, for pointing out an error in an earlier draft of this paper during a conference presentation. His comment was extremely helpful in revising the manuscript.



\vspace{-0.1cm}

\par\noindent
\begin{flushleft}Ikki Fukuda\\
Division of Mathematics and Physics, Faculty of Engineering, Shinshu University, \\
4-17-1, Wakasato, Nagano, 380-8553, JAPAN\\
E-mail: i\_fukuda@shinshu-u.ac.jp
\end{flushleft}

\end{document}